\numberwithin{equation}{section}
\newcommand\mydots{\hbox to 1em{.\hss.\hss.}}
\newcommand*{\rom}[1]{\expandafter\@slowromancap\romannumeral #1@}
\newcommand{\subscript}[2]{$#1 _ #2$}
\newtheorem{theorem}{Theorem}[section]
\newtheorem{lemma}[theorem]{Lemma}
\newtheorem{proposition}[theorem]{Proposition}
\newtheorem{corollary}[theorem]{Corollary}
\theoremstyle{definition}
\newtheorem{example}[theorem]{Example}
\newtheorem{dfn}[theorem]{Definition}
\newtheorem{remark}[theorem]{Remark}
\NewDocumentCommand{\xhrulefill}{O{}}
 {
  \group_begin:
  \severin_xhrulefill:n { #1 }
  \group_end:
 }
\DeclareMathOperator{\im}{im}
\newcommand{\unit}{1\!\!1}
\DeclareMathOperator{\ind}{ind}
\DeclareMathOperator{\vndet}{det_{\Gamma}}
\DeclareMathOperator{\vntr}{tr_{\Gamma}}
\DeclareMathOperator{\vnH}{\mathcal H}
\DeclareMathOperator{\vndim}{\dim_{\mathcal N(\Gamma)}}
\DeclareMathOperator{\GL}{GL}
\DeclareMathOperator{\reals}{\mathbb R}
\DeclareMathOperator{\ceals}{\mathbb C}
\DeclareMathOperator{\End}{End}
\DeclareMathOperator{\spec}{spec}
\DeclareMathOperator{\dom}{dom}
\DeclareMathOperator{\Int}{Int}
\DeclareMathOperator{\tr}{tr}
\DeclareMathOperator{\Isom}{Isom}
\DeclareMathOperator{\Cr}{Cr}
\DeclareMathOperator{\vnN}{\mathcal N}
\DeclareMathOperator{\sob}{\mathcal  W}
\DeclareMathOperator{\bu}{*}
\author{Benjamin Wa{\ss}ermann}
\title{An $L^2$-Cheeger M\"uller theorem on compact manifolds-with-boundary}
\begin{document}

\footnotetext{This work is funded by the DFG (Deutsche Forschungsgemeinschaft) -- 28186985}

\maketitle

\begin{abstract}
We generalize a Cheeger-M\"uller type theorem for flat, unitary bundles on infinite covering spaces over manifolds-with-boundary, proven by Burghelea, Friedlander and Kappeller. Employing recent anomaly results by Br\"uning, Ma and Zhang,
we prove an analogous statement for a general flat bundle that is only required to have a unimodular restriction to the boundary. 
\end{abstract}

\section{Introduction and statement of the main results}
For any flat bundle $E$ over a compact, triangulated manifold $M$ (briefly denoted by $E \downarrow M$ throughout this paper), one can construct the classical Reidemeister torsion, see for example \cite{Milnor:Torsion}. In \cite{Chapman}, Chapman showed that for acyclic bundles, this torsion is independent of the chosen triangulation, thereby also suggesting that there must be an alternative way to define it. \\  With the aid of a Riemannian metric $g$ on $M$, Ray and Singer defined in \cite{Ray:Atorsion} the analytic torsion for unitary bundles $E \downarrow M$ and showed that it does not depend on the choice of metric $g$ in case that $\partial M = \emptyset$. Furthermore, they conjectured that this analytic torsion must be equal to the Reidemeister torsion. This result was then independently proven by M\"uller \cite{Muller:Tor1} and Cheeger \cite{Cheeger:Tor} in the case $\partial M = \emptyset$. Later, M\"uller defined analytic torsion in the setting of a unimodular bundle $E \downarrow M$ and extended his earlier result \cite{Muller:Tor2}. At about the same time, Bismut and Zhang formulated a Cheeger-M\"uller type theorem for arbitrary flat bundles $E \downarrow M$ \cite{Bismut:Extension}, generalizing the notion of analytic and Reidemeister torsion in the same process. \\
The $L^2$-versions of Reidemeister and analytic torsion first appeared in \cite{Carey:Tor}, respectively \cite{Mathai:Tor}, and were first only defined for compact manifolds that are $L^2$-acyclic. Burghelea, Friedlander, Kappeler and MacDonald later extended these definitions onto unitary bundles $E \downarrow M$ without any assumption on $L^2$-acyclicity \cite{Friedlander:Uni}, and showed that both invariants are in fact equal. In \cite{Zhang:CM}, adapting the methods he earlier co-developed in \cite{Bismut:Extension}, Zhang extended this result even further onto arbitrary flat bundles, providing an explicit formula of the anomaly between $L^2$-Reidemeister and $L^2$-analytic torsion in this case and strengthening  an earlier result \cite{Friedlander:Rel} in the same vein by Burghelea, Friedlander and Kappeler. Instead of Reidemeister torsion, the authors of \cite{Bismut:Extension}, \cite{Friedlander:Uni}, \cite{Friedlander:Rel} and \cite{Zhang:CM} used the so-called Morse-Smale torsion (see Section $3.1$), which is defined using triangulations derived from a given Morse function $f \colon M \to \reals$. While additionally requiring a Hermitian bundle metric $h$ of $E$ to be defined, the smooth data involved in its construction makes it applicable for the Witten-deformation technique that plays a key part in the comparsion with analytic torsion. Moreover, although not explicitly written down anywhere, it is folklore knowledge that the Morse-Smale torsion coincides with the Reidemeister torsion whenever the volume form induced by $h$ is flat. An explicit proof of this and other related statements will be subject of a separate paper from the author. \\
Now assume that  $\partial M \neq \emptyset$. Under the assumption that the Hermitian metric $h$ is flat and the Riemannian metric $g$ is a product near $\partial M$, the difference between Reidemeister and Analytic torsion has been made explicit by L\"uck \cite{Lueck:Tor} and Vishik \cite{Vishik:Tor}. After various different generalizations of this particular result with relaxed assumptions on the metrics $g$ and $h$, the most general case without any further assumptions on $g$ or $h$ has been studied by Br\"uning and Ma in \cite{Bruning:Glue,Bruning:An}, who were able to prove an anomaly formula \cite[Theorem 0.1]{Bruning:Glue} entirely extending the result of Bismut and Zhang \cite[Theorem 0.2]{Bismut:Extension} to manifolds-with-boundary.\\
Adapting the techniques of their original result in the closed manifold case, the relation between $L^2$-Reidemeister torsion and $L^2$-analytic torsion on manifolds-with-boundary was studied by Burghelea, Friedlander and Kappeller \cite{Friedlander:Bd} under the assumption that $h$ is flat and $g$ is a product near $\partial M$. In \cite{Lueck:hyp}, L\"uck and Schick showed that anomaly of the $L^2$-analytic torsion is created when $g$ is deformed near $\partial M$. This anomaly was made explicit by Ma and Zhang \cite{Zhang:AN}, showing that it in fact equals the anomaly of ordinary analytic torsion. Making use of all the results mentioned so far, our main result, Theorem \ref{MAINTHEOREM2}, will be a Cheeger-M\"uller type theorem for $L^2$-acyclic unimodular bundles $E \downarrow M$ on manifolds-with-boundary satisfying $\chi(M) = 0$. \\ In order to state the result, we fix a flat bundle $E \downarrow M$ as above, along with a Riemannian metric $g$ on $M$ and an Hermitian metric $h$ on $E$. Additionally, we choose a Morse function $f \colon M \to \reals$, whose critical points lie in the interior of $M$ and that is constant along $\partial M$, together with some Riemannian metric $g'$ so that the pair $(f,g')$ satisfies the {\itshape Smale-transversality} conditions, c.f.\ Definition \ref{dfnms}. We denote by $\nabla_{g'}f$ the corresponding gradient vector field and call the quadruple $(E \downarrow M,g,h,\nabla_{g'}f)$ a {\itshape type \rom{2} Morse-Smale system}. We say that $(E \downarrow M,g,h,\nabla_{g'}f)$ is {\itshape of product form} if both $g$ and $h$ are products near $\partial M$, see Definition \ref{ADMISSIBLESYSTEM}. Provided that the bundle $E \downarrow M$ is of {\itshape determinant class} (Definitions \ref{MSCPLX},\ref{DRCPLX} \& Theorem \ref{ANCOMBCH}), a Ray-Singer analytic $L^2$-torsion \begin{equation} T^{RS}_{(2)}(E \downarrow M,g,h,\nabla_{g'}f) \in \reals_{>0}, \end{equation} as well as a Morse-Smale $L^2$-torsion \begin{equation} T^{MS}_{(2)}(E \downarrow M,h,\nabla_{g'}f) \in \reals_{>0}. \end{equation} can be defined, see Definitions \ref{MSCPLX} and Equation \ref{RSL2TOR}. To make precise the anomaly between the two $L^2$-torsions, two quantities need to be introduced. The first of these is given by the $1$-form \begin{equation} \theta(h) \in \Omega^1(M), \end{equation} derived from $h$, see Equation \ref{VOLCH}. Roughly stated, it measures the change along $M$ of the volume form induced by $h$, and vanishes precisely when $h$ is unimodular, i.e.\ when the volume form induced by $h$ is flat. The second one is the so-called {\itshape Mathai-Quillen current} \begin{equation} \Psi(TM,g) \in \Omega^{n-1}(TM \setminus M,\mathcal O_{TM}) \end{equation} derived from $g$ \cite[Definition 3.6]{Bismut:Extension}, where $\mathcal O_{TM}$ denotes the orientation line bundle over the tangent bundle $TM$. Since the gradient field $\nabla_{g'}f$ can also be regarded as a smooth embedding from $M \setminus \Cr(f)$ into $TM \setminus M$ (where $M \subseteq TM$ is identified with the zero section), we obtain via pullback a density \begin{equation} \nabla_{g'}f^* \Psi(TM,g) \in \Omega^{n-1}(M \setminus \Cr(f),\mathcal O_{M}). \end{equation} With all these objects introduced, our first main result can be stated as follows: \begin{theorem}[Theorem \ref{MAINTHEOREM}]  Let $\mathcal D = (E \downarrow M,g,h,\nabla_{g'}f)$ be a type \rom{2} Morse-Smale system of product form, where $M$ is an odd-dimensional manifold and $h|_{\partial M}$ is unimodular. Further, assume that both $E \downarrow M$ and $E|_{\partial M} \downarrow \partial M$ are of determinant class. Then 
\begin{align}& \log \left(\frac{T^{RS}_{(2)}(E \downarrow M,g,h,\nabla_{g'}f) }{T^{MS}_{(2)}(E \downarrow M,h,\nabla_{g'}f)}\right) = - \frac{\log 2}{4} \chi(\partial M)  \dim(E)   -  \frac{1}{2}\int_{M} \theta(h) \wedge \nabla_{g'}f^* \Psi(TM,g). \end{align}
\end{theorem}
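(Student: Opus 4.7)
The strategy is to interpolate between two previously established endpoints: Zhang's closed-manifold $L^2$-Cheeger-M\"uller theorem \cite{Zhang:CM}, which produces the interior contribution $-\frac{1}{2}\int_M\theta(h)\wedge\nabla_{g'}f^*\Psi(TM,g)$, and the Burghelea-Friedlander-Kappeler theorem \cite{Friedlander:Bd} for unitary bundles on manifolds-with-boundary, which yields the boundary correction $-\frac{\log 2}{4}\chi(\partial M)\dim(E)$. The bridge between them is furnished by the $L^2$-analytic anomaly formula of Ma and Zhang \cite{Zhang:AN}. I would introduce the defect
\[
F(\mathcal D) := \log\left(\frac{T^{RS}_{(2)}(E \downarrow M,g,h,\nabla_{g'}f)}{T^{MS}_{(2)}(E \downarrow M,h,\nabla_{g'}f)}\right) + \frac{\log 2}{4}\chi(\partial M)\dim(E) + \frac{1}{2}\int_M\theta(h)\wedge\nabla_{g'}f^*\Psi(TM,g),
\]
and show $F(\mathcal D) = 0$ in two stages: first by proving invariance of $F$ under admissible deformations of the smooth data, then by evaluating $F$ at a reference configuration.

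For the variational stage, consider a smooth one-parameter family $\mathcal D_s=(E\downarrow M, g_s, h_s, \nabla_{g_s'}f)$ of type II Morse-Smale systems of product form with $h_s|_{\partial M}$ unimodular throughout. The derivative $\partial_s F$ splits into contributions from three sources: the Ma-Zhang anomaly \cite{Zhang:AN} governs the variation of $\log T^{RS}_{(2)}$ and decomposes into a bulk part (matching the variation of the Mathai-Quillen integral through the transgression property of $\Psi(TM,g)$) and a boundary part supported on $\partial M$; the variation of $\log T^{MS}_{(2)}$ depends only on $h_s$ and is controlled by a combinatorial Bismut-Zhang-type anomaly adapted to the $L^2$-setting; and the variation of $\int_M \theta(h_s)\wedge\nabla_{g'}f^* \Psi(TM,g_s)$ can be computed directly. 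The crucial point is that the product form of $(g_s, h_s)$ together with the unimodularity of $h_s|_{\partial M}$ forces $\theta(h_s)$ to vanish on a collar of $\partial M$, which is precisely what is needed for the boundary contributions to cancel, yielding $\partial_s F \equiv 0$.

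For the evaluation stage, I would use a doubling construction. Let $\tilde M = M \cup_{\partial M} M$, and extend $(g, h)$ symmetrically and smoothly by virtue of the product-form assumption. The Morse function extends by $\tilde f|_{M_+} = f$ and $\tilde f|_{M_-} = 2c - f$, where $c$ is the constant value of $f$ on $\partial M$, producing a smooth Morse function on $\tilde M$ with critical points disjoint from $\partial M$. The doubled bundle $\tilde E$ remains of determinant class by a Mayer-Vietoris argument using both hypotheses on $E\downarrow M$ and $E|_{\partial M}\downarrow \partial M$. Applying Zhang's closed-manifold $L^2$-CM theorem \cite{Zhang:CM} to $\tilde M$ and then descending to $M$ via the gluing formulas for $T^{RS}_{(2)}$ and $T^{MS}_{(2)}$ on the doubled manifold (the analytic gluing producing the $-\frac{\log 2}{4}\chi(\partial M)\dim(E)$ correction, exactly as in the BFK argument \cite{Friedlander:Bd}, and the symmetry yielding $\int_{\tilde M} = 2\int_M$ for the Mathai-Quillen integral) gives $F(\mathcal D) = 0$ for this reference configuration. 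Combined with the variational invariance, this proves the theorem.

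The main obstacle will be controlling the boundary contributions throughout. The Ma-Zhang anomaly \cite{Zhang:AN} produces explicit boundary integrals over $\partial M$ involving curvature data, and these must be shown to cancel precisely against the combinatorial anomaly of $T^{MS}_{(2)}$ in the variational step. The hypothesis that $h|_{\partial M}$ is unimodular is the critical ingredient: it ensures $\theta(h)$ vanishes near $\partial M$ in the product form and thereby removes the most troublesome boundary corrections. A secondary technical difficulty is the gluing formula for $L^2$-Morse-Smale torsion across a common boundary, which must be consistent with the analytic gluing and compatible with the determinant class condition on $E|_{\partial M}\downarrow \partial M$.
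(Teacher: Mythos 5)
Your variational stage is broadly sound as far as it goes (the paper's Proposition \ref{METANOREAL} and Corollary \ref{RELINVSUB} implement precisely this type of anomaly invariance, using the Ma--Zhang and Br\"uning--Ma results together with the combinatorial anomaly of Proposition \ref{PROP2}), but the evaluation stage has a genuine gap. You propose to double $M$ to $\widetilde M = M \cup_{\partial M} M$, apply Zhang's closed-manifold theorem to $\widetilde M$, and then descend via ``the gluing formulas for $T^{RS}_{(2)}$ and $T^{MS}_{(2)}$\dots exactly as in the BFK argument.'' But the BFK gluing formula you are invoking \cite[Theorem 4.3]{Friedlander:Bd} is established only for \emph{unitary} flat bundles, and its extension to general flat bundles is not available in this paper --- Remark 4.5 explicitly flags the non-unitary gluing formula as a \emph{consequence} of (a variant of) Theorem \ref{MAINTHEOREM}, not a prerequisite. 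Invoking it to prove Theorem \ref{MAINTHEOREM} is therefore circular.

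The paper deliberately avoids needing a gluing formula by using a \emph{comparison} argument instead of a doubling-and-gluing argument. Concretely, Proposition \ref{gluecomp} shows that if two systems $\mathcal D_1$, $\mathcal D_2$ with the same boundary data can both be capped off by a common piece $M_3$ to produce closed manifolds $N_1, N_2$, then the defect $\mathcal R(\mathcal D_i) + \tfrac{1}{2}\int_{M_i}\theta\wedge f_i^*\Psi$ is the same for both: one applies Zhang's closed-manifold theorem to $N_1$ and $N_2$ and takes the \emph{difference}, so that the $M_3$-contribution cancels by locality. The nontrivial ingredient making this work is the Witten-deformation machinery of Section 6, specifically Corollary \ref{rella}, which shows that the free term of the asymptotic expansion of the large-torsion is an integral of a local density; this gives $R(\mathcal D_1)-R(\mathcal D_2) = R(\overline{\mathcal D_1})-R(\overline{\mathcal D_2})$ without any gluing formula. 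Your proposal does not engage with this asymptotic-expansion technology, which is where the real work is. The explicit constant $-\tfrac{\log 2}{4}\chi(\partial M)\dim(E)$ is then produced not by analytic gluing but by the direct computation on the interval in Example \ref{TRIVEX}, propagated through the product formula (Proposition \ref{PRODFORM2}) to the cylinder $\partial M \times I$, and compared against $\mathcal D \sqcup \mathcal D$ via Theorem \ref{strongthm} --- yielding $2\mathcal R(\mathcal D) = \mathcal R(\mathcal D_I) - \int_M\theta\wedge f^*\Psi$, which produces the factor $\tfrac{1}{4}$ rather than $\tfrac{1}{2}$. Finally, your proposal does not address invariance under change of the Morse function, which the paper handles through subdivision (Corollary \ref{RELINVSUB}) and which is needed in Case 2 of Theorem \ref{strongthm} to bring two arbitrary admissible systems into the range of the Witten-deformation comparison.
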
  This result can be viewed as a strict generalization of the main result of \cite{Friedlander:Bd}, where the authors made the more restrictive assumption that the metric $h$ is globally flat. \\
In order to state the second main result of this paper, we suppose that the bundle $E \downarrow M$ is unimodular, i.e.\ associated with a unimodular representation of $\pi_1(M)$. Then, assuming that $E \downarrow M$ is $L^2$-acyclic and of determinant class, one can define a {\itshape topological $L^2$-torsion} $T^{Top}_{(2)}(M,E) \in \reals_{>0}$. It can be defined similarly like $T^{MS}_{(2)}(E \downarrow M,h,\nabla_{g'}f)$, with the aid of {\itshape any} given CW-structure on $M$ and any fixed inner product on $V$, see \cite[Definition 5.2.5,~Theorem 5.3.12]{Ich}, and coincides with $T^{MS}_{(2)}(E \downarrow M,h,\nabla_{g'}f)$ whenever $h$ is unimodular. 
\begin{theorem}[Theorem \ref{MAINTHEOREM2}]  Let $(M,g)$ be a compact, connected, odd-dimensional Riemannian manifold. Then, there exists a density $B(g) \in \Omega^{n-1}(\partial M,\mathcal O_{\partial M})$ with $B(g) \equiv 0$ when $g$ is product-like near $\partial M$, such that the following holds:
\\ Let $E \downarrow M$ be a flat, finite-dimensional complex vector bundle, such that
\begin{enumerate}[label=\emph{$(\alph*)$}] 
\item $E$ is unimodular,
\item the pair $(M,E)$ is $L^2$-acyclic and of determinant class, 
\item the restriction $(\partial M,E|_{\partial M})$ is of determinant class.
\end{enumerate}
Then, for any choice of unimodular metric $h$ on $E$, one has
\begin{equation} \log \left (\frac{T^{RS}_{(2)}(E \downarrow M,g,h,\nabla_{g'}f) }{T^{Top}_{(2)}(M,E)} \right) = \frac{1}{2}\dim_{\ceals}(E) \int_{\partial M} B(g). \end{equation}  \end{theorem}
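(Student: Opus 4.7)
The plan is to interpolate $(g,h)$ with a reference system of product form, apply Theorem \ref{MAINTHEOREM} at the reference point, and absorb all non-product behavior into a boundary density extracted from the metric anomaly of the $L^2$-analytic torsion.

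\textbf{Step 1 (product reference).} Choose an auxiliary Riemannian metric $g_0$ on $M$ which is product-like near $\partial M$ with $g_0|_{\partial M} = g|_{\partial M}$, and a unimodular Hermitian metric $h_0$ on $E$ which is product-like near $\partial M$ with $h_0|_{\partial M} = h|_{\partial M}$. Fix Morse-Smale data $(f, g')$ with $f$ constant on $\partial M$. Then $\mathcal D_0 := (E \downarrow M, g_0, h_0, \nabla_{g'}f)$ is a type \rom{2} Morse-Smale system of product form satisfying the hypotheses of Theorem \ref{MAINTHEOREM}: global unimodularity of $h_0$ gives $\theta(h_0) \equiv 0$; $L^2$-acyclicity forces $\chi(M)=0$, whence $\chi(\partial M) = 2\chi(M) = 0$ by Poincar\'e-Lefschetz duality on the odd-dimensional $M$. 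Thus both terms on the right-hand side of Theorem \ref{MAINTHEOREM} vanish. Combined with the identification $T^{MS}_{(2)}(E \downarrow M, h_0, \nabla_{g'}f) = T^{Top}_{(2)}(M, E)$ (valid for unimodular $h_0$, as recalled in the introduction) this yields
\begin{equation}\label{eq:productidentity}
\log T^{RS}_{(2)}(E \downarrow M, g_0, h_0, \nabla_{g'}f) = \log T^{Top}_{(2)}(M, E).
\end{equation}

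\textbf{Step 2 (extracting $B(g)$ from the anomaly).} Smoothly deform $(g_0, h_0)$ to $(g, h)$ through a path $(g_t, h_t)$ with each $h_t$ unimodular (the space of unimodular Hermitian metrics on $E$ is path-connected, e.g.\ via $h_t = h_0(A^t\cdot, \cdot)$ for a positive $h_0$-self-adjoint $A$ parallel-normalized to determinant one). The Ma-Zhang anomaly result \cite{Zhang:AN} identifies the variation of $L^2$-analytic torsion under such deformations with the corresponding variation of classical Ray-Singer torsion computed by Br\"uning-Ma \cite{Bruning:Glue, Bruning:An}. Since $\theta(h_t) \equiv 0$ along the path, every bulk integrand of the form $\theta(h_t) \wedge (\cdots)$ vanishes identically, leaving only the Br\"uning-Ma boundary contribution. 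Denoting this boundary integrand by $B(g)$, one obtains
\begin{equation*}
\log T^{RS}_{(2)}(E \downarrow M, g, h, \nabla_{g'}f) - \log T^{RS}_{(2)}(E \downarrow M, g_0, h_0, \nabla_{g'}f) = \tfrac{1}{2}\dim_{\ceals}(E) \int_{\partial M} B(g),
\end{equation*}
with $B(g)$ a local expression in $g$ and its jets along $\partial M$ which vanishes in the product-like case (there one may simply take $g_0 = g$, so the entire anomaly is zero). Substituting \eqref{eq:productidentity} delivers the claimed identity.

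\textbf{Main obstacle.} The principal difficulty is verifying that the boundary density $B(g)$ produced by the anomaly depends only on $g$, and not on the auxiliary choices $h_0$, $g_0$, the deformation path, or the gradient field $\nabla_{g'}f$. Path- and $h_0$-independence will follow from the closed-form variational formula of \cite{Bruning:Glue} once the $\theta$-bulk is seen to be identically zero along the entire path. Independence of $\nabla_{g'}f$ is forced by the fact that the left-hand side of \eqref{eq:productidentity} carries no Morse data, so any apparent Morse dependence in the anomaly must cancel. Crucially, the unimodularity of $h$ is needed \emph{globally}, not merely on $\partial M$: only then does the Mathai-Quillen transgression vanish in the bulk along $(g_t, h_t)$, which is precisely what rules out a looser analogue of hypothesis $(a)$.
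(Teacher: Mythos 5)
Your proof follows essentially the same route as the paper's: apply Theorem \ref{MAINTHEOREM} and Theorem \ref{TOPTOR} to a product-form reference system with globally unimodular Hermitian metric (both terms of the anomaly formula vanish since $\chi(\partial M)=0$ and $\theta\equiv 0$), then use the Ma--Zhang result \cite{Zhang:AN} to identify the $L^2$-analytic torsion anomaly between $(g,h)$ and the reference with the ordinary Ray--Singer anomaly, and finally invoke the Br\"uning--Ma formula \cite[Theorem 3.4]{Bruning:Glue}, which reduces to the boundary term $B(g)$ once the $\theta$-bulk integrand is killed by unimodularity. The paper's version is slightly more precise on one technical point you leave implicit: it explicitly switches to a type \rom{1} Morse function for the anomaly step (the Ma--Zhang and Br\"uning--Ma results are formulated for such data) and then uses $L^2$-acyclicity to erase the dependence on Morse data -- but this is exactly the observation you make in your ``Main obstacle'' paragraph, so the argument is substantively the same.
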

In a forthcoming paper, Theorem $1.2$ will be used to generalize the main result of \cite{Lueck:hyp} by L\"uck and Schick, in which we will show the equality of Ray-Singer analytic $L^2$-torsion and topological $L^2$-torsion for a large class of flat, unimodular bundles over finite-volume, hyperbolic manifolds, which are studied by several other authors as well \cite{Abert:Growth,Bergeron:Growth,Muller:Hyp,Muller:cover}. \\
The rest of this paper is subdivided into $6$ sections, which are structured as follows: In Section $2$, we briefly review the abstract theory of Hilbert $\vnN(\Gamma)$-modules that is necessary to define the Novikov-Shubin invariants, the determinant class condition and the general $L^2$-torsion which are studied in the rest of the paper. In Section $3$, we introduce the central objects of this paper: Morse-Smale systems, their Morse-Smale $L^2$-torsion and analytic $L^2$-torsion, as well as the derived metric $L^2$-torsion and relative torsion. In Section $4$, we state our main results, Theorem \ref{MAINTHEOREM} and \ref{MAINTHEOREM2} and give a proof of the latter. In Section $5$, we present product and anomaly formulas for the different $L^2$-torsions. In Section $6$, we will review the techniques employed by Burghelea et al.\ in their original proof for unitary bundles: Witten-Deformation, the splitting of the de Rham complex into the small and large subcomplex and the asymptotic expansions of the respective $L^2$-torsions. In Section $7$, we give a proof of Theorem \ref{MAINTHEOREM}. \\
The paper is based on parts of the author's dissertation \cite{Ich}. I thank my advisor, Prof.\ R.\ Sauer, for his constant support and numerous fruitful discussions. 
\section{$L^2$-torsion of Hilbert $\vnN(\Gamma)$-cochain complexes}\label{Hilsec}
We start by recollecting the objects and theory of Hilbert $\vnN(\Gamma)$-modules that are relevant for this paper. The well-acquainted reader may skip this section. \\
Throughout, we fix a countable group $\Gamma$. We define $L^2(\Gamma)$ to be the complex Hilbert space generated over the set $\Gamma$. Note that multiplying group elements of $\Gamma$ from the left naturally determines a left, linear, isometric $\Gamma$-action on $L^2(\Gamma)$. More generally, a complex Hilbert space $\vnH$ is called a {\bfseries Hilbert $\vnN(\Gamma)$-module} if it comes equipped with a left, linear, isometric $\Gamma$-action, so that there exists a $\Gamma$-linear, isometric embedding of $\vnH$ into $L^2(\Gamma) \hat \otimes H$ for some Hilbert space $H$. Here, $L^2(\Gamma) \hat \otimes H$ denotes the {\itshape Hilbert space tensor product} of $L^2(\Gamma)$ and $H$, with isometric $\Gamma$-action given by the action on the left factor. If one can choose $H$ to be finite-dimensional over $\ceals$, we call the Hilbert $\vnN(\Gamma)$-module $\vnH$ {\itshape finitely generated}.  \\ A $\Gamma$-linear, closed and densely defined operator $f: \vnH \to \vnH'$ between two Hilbert $\vnN(\Gamma)$-modules $\vnH$ and $\vnH'$ is called a {\itshape morphism of Hilbert $\vnN(\Gamma)$-modules} Since $\Gamma$ is fixed and implicit throughout this section, we will simply refer to such $f$ as a morphism. Any {\itshape positive}, bounded endomorphism $f: \vnH \to \vnH$ admits a natural {\itshape von Neumann trace} \begin{equation} \vntr(f) \in [0,\infty], \end{equation} \cite[Definition 1.8]{Lueck:book}  which satisfies $\vntr(f) < \infty$ whenever $\vnH$ is finitely generated. With this, we define the {\itshape von Neumann dimension} of $\vnH$ is \begin{equation} \vndim(\vnH) \coloneqq \vntr(\unit_{\vnH}). \end{equation} 
The adjoint $f^*$, the self-adjoint composition $f^*f$, as well as the absolute value $|f| \coloneqq \sqrt{f^*f}$ of a morphism $f$ are again morphisms. Similarly, if $E^{|f|}$ is the spectral measure associated with the positive, self-adjoint operator $|f|$ and if $p$ is a positive, essentially bounded Borel function defined over the spectrum $\sigma(|f|)$ of $|f|$, then \begin{equation} p(f) \coloneqq \int_{-\infty}^\infty p \; dE^{|f|}(\lambda) \end{equation} is a positive, bounded endomorpism, which is why $\vntr(p(f)) \in [0,\infty]$ is always well-defined. In particular, the family $\{\chi_{[0,\lambda]}(f)\}_{\lambda \in \reals_{\geq 0}}$ of spectral projections associated to $f$ further gives rise to a non-decreasing, right-continuous function \begin{equation} F_f(\lambda) \coloneqq \vntr(\chi_{[0,\lambda]}(f)) \in [0,\infty] \end{equation} in $\lambda \geq 0$, called the {\itshape spectral density function} of $f$. We say that $f$ is {\itshape Fredholm} if $F_f(\lambda) < \infty$ for all $\lambda \geq 0$. As a quantitative measurement of the spectral behaviour near $0$, the  {\itshape Novikov-Shubin invariant} $\alpha(f) \in [0,\infty] \cup \{\infty^+\}$ of a Fredholm morphism $f$ is defined as \begin{equation} \alpha(f) \coloneqq \begin{cases} \liminf_{\lambda \to 0^+} \dfrac{\ln \left(F(\lambda) - F(0)\right)}{\ln(\lambda)} & \text{if} \; F(\lambda) > F(0) \;\; \forall \lambda > 0, \\ \infty^+ & \text{else}.\end{cases} \end{equation} $\alpha(f)$ equals the (purely formal) symbol $\infty^+$ precisely when $|f|$ has a spectral gap at $0$. \\ Moreover, if $f$ is Fredholm, its spectral density determines a Borel measure $dF_f$ on $\reals_{\geq 0}$ in the canonical fashion. A Fredholm morphism $f$ is said to be {\itshape of determinant class} if \begin{equation} \int_{0+}^1 \log(\lambda) dF_f(\lambda) > - \infty. \end{equation} A morphism $f$ with $\alpha(f) > 0$ is always of determinant class, although the converse need not hold. If $f$ is a {\itshape bounded} morphism of determinant class, we can define its {\itshape Fuglede-Kadison determinant} $\vndet(f) \in \reals_{>0}$ as \begin{equation} \log(\vndet(f)) \coloneqq \int_{0+}^{||f||} \log(\lambda) dF_f(\lambda).\end{equation} 
A cochain complex
\begin{equation} (C^*,c^*) \colon 0 \to C^0 \xrightarrow{c^0} C^1 \xrightarrow{c^1} C^2 \xrightarrow{c^2} C_3 \xrightarrow{c^3} \dots, \end{equation} 
with each $C^i$ a Hilbert $\vnN(\Gamma)$-module and each $c^i$ a (not necessarily bounded) morphism of Hilbert $\vnN(\Gamma)$-modules is called a {\bfseries Hilbert $\vnN(\Gamma)$-cochain complex}. If all but finitely many of the $C^i$'s are trivial, each $C^i$ is finitely generated and each $c^i$ is bounded, then $(C^*,c^*)$ is {\itshape of finite type}.
A family $f^* \colon C^* \to D^* \colon (f^k \colon C^k \to D^k)_{k \in \mathbb N}$ of {\itshape bounded} morphisms is called a {\itshape morphism between the Hilbert $\vnN(\Gamma)$-cochain complexes} $(C^*,c^*)$ and $(D^*,d^*)$ if it additionally satisfies $f^*(\dom(c^*)) \subseteq \dom(d^*)$ and $f^{*+1} \circ c^* = d^* \circ f^*$ on $\dom(c^*)$. $f^*$ is called an {\itshape isomorphism} if each $f^k$ is an isomorphism.  \\
We say that two morphisms $f^*,g^*: C^* \to D^*$ between Hilbert $\vnN(\Gamma)$-cochain complexes are {\itshape chain homotopic} (written $f \simeq g$) if there exists a collection of bounded morphisms $K^*: C^* \to D^{*-1}$, satisfying\begin{align*}& K^*(\dom(c^*)) \subseteq \dom(d^{*-1}), \\  &f^* - g^* = K^{*+1}  c^* + d^{*-1}  K^* \hspace{.5cm} \text{on} \; \dom(c^*). \end{align*}  $K^*$ is called an {\itshape chain homotopy} between $f^*$ and $g^*$. 
Two Hilbert $\vnN(\Gamma)$-cochain complexes $(C^*,c^*)$ and $(D^*,d^*)$ are called {\bfseries chain homotopy equivalent} (written $C^* \sim D^*$) if there exists morphisms $f^*: C^* \to D^*$ and $g^*: D^* \to C^*$ such that $f^*  g^* \simeq \unit_{D^*}$ and $g^*  f^* \simeq \unit_{C^*}$. $f^*$ is called a {\bfseries chain homotopy equivalence} between $C^*$ and $D^*$ with {\itshape chain homotopy inverse $g^*$}. \\The (full) $L^2$-coholomology of a Hilbert $\vnN(\Gamma)$-cochain complex is the graded Hilbert $\vnN(\Gamma)$-module defined as \begin{equation} H^*(C^*) \coloneqq \bigoplus_{k=0}^\infty H^k(C^*), \; \; \; H^k(C^*) \coloneqq \ker(c^k) / \text{clos}(\im(c^{k-1})). \end{equation}
A cochain complex $(C^*,c^*)$ is {\itshape Fredholm} if all of the restricted morphisms $c^k|_{\im(c^{k-1})^\perp}$, $k \in \mathbb N_0$, are Fredholm. Observe that a complex $(C^*,c^*)$ of finite type is automatically Fredholm. For a Fredholm complex, we define its $k$-th Novikov-Shubin invariant $\alpha_k(C^*) \in [0,\infty] \cup \{\infty+\}$ as \begin{equation} \alpha_k(C^*) \coloneqq \alpha(c^k|_{\im(c^{k-1})^\perp}). \end{equation} A Fredholm complex is said to be {\itshape of determinant class} if all of the restricted morphisms $c^k|_{\im(c^{k-1})^\perp}$ are of determinant class. If $C^*$ is a determinant class {\itshape and} of finite type, we define its $L^2$-Torsion $T^{(2)}(C^*) \in \reals_{>0}$ as \begin{equation}
\log(T^{(2)}(C^*)) \coloneqq \sum_{k=0}^\infty (-1)^k \log(\vndet(c^k)). \end{equation}
\begin{proposition}\cite[Proposition 4.1]{Gromov:Novi}\label{PROP1} Let $(C^*,c^*)$ and $(D^*,d^*)$ be two cochain complexes of Hilbert $\vnN(\Gamma)$-modules and $f^* \colon C^* \to D^*$ a chain homotopy equivalence between them. Then, $f^*$ descends to an isomorphism of $L^2$-cohomologies \begin{equation} H^*(f^*) \colon H^*(C^*) \to H^*(D^*). \end{equation} Additionally, if both $C^*$ and $D^*$ are Fredholm, we have
\begin{enumerate}
\item $\alpha_k(C^*) = \alpha_k(D^*)$ for each $k \in \mathbb N_0$. 
\item $C^*$ is of determinant class if and only if $D^*$ is of determinant class. \end{enumerate}
\end{proposition}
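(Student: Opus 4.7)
The plan is to treat the cohomology isomorphism separately from the spectral claims (1) and (2), since the former is a routine homological manipulation while the latter requires a careful spectral comparison of the reduced differentials via the chain homotopies.

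For the induced isomorphism $H^*(f^*)$, first I would check well-definedness: the intertwining relation $f^{k+1} c^k = d^k f^k$ on $\dom(c^k)$ shows that $f^k$ sends $\ker(c^k)$ into $\ker(d^k)$, and since $f^k$ is bounded and maps $\im(c^{k-1})$ into $\im(d^{k-1})$, it also sends the closure $\clos(\im(c^{k-1}))$ into $\clos(\im(d^{k-1}))$. Hence $f^k$ descends to a bounded morphism $H^k(f^*)\colon H^k(C^*)\to H^k(D^*)$. Inverting the roles of $f$ and $g$, the same is true for $g^*$. Now for $x\in\ker(c^k)$, the homotopy relation $g^k f^k - \unit_{C^k} = K^{k+1}c^k x + c^{k-1}K^k x$ reduces to $g^k f^k x - x = c^{k-1}K^k x \in \im(c^{k-1})$, so $H^k(g^*)\circ H^k(f^*) = \unit$; symmetrically, $H^k(f^*)\circ H^k(g^*) = \unit$. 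This gives the first assertion.

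For the Fredholm part, the plan is to reduce everything to the orthogonal parts $\bar c^k := c^k|_{\clos(\im c^{k-1})^\perp}$ and $\bar d^k := d^k|_{\clos(\im d^{k-1})^\perp}$, whose spectral density functions directly compute $\alpha_k$ and the determinant class property. The strategy is to show that the spectral density functions $F_{\bar c^k}$ and $F_{\bar d^k}$ are \emph{dilatationally equivalent} near $0$, in the sense that there exist constants $C,\varepsilon>0$ with
\begin{equation}
F_{\bar d^k}(\lambda/C)\;\le\;F_{\bar c^k}(\lambda)\;\le\;F_{\bar d^k}(C\lambda),\qquad 0<\lambda<\varepsilon,
\end{equation}
together with an analogous bound in the other direction. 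Such a relation is well known to preserve both the Novikov–Shubin invariant and the determinant class condition (see \cite{Lueck:book}). To set this up, I would use the orthogonal projections $P_C^k\colon C^k\to\clos(\im c^{k-1})^\perp$ and $P_D^k$ similarly, and consider the compressed maps $\widetilde f^k := P_D^k \circ f^k$ and $\widetilde g^k := P_C^k \circ g^k$ restricted to the orthogonal complements. Using the intertwining relations together with the chain homotopies, one verifies that these compressed operators \emph{intertwine $\bar c^k$ and $\bar d^k$ up to bounded perturbations}, yielding factorizations of the form $\bar d^k = \widetilde f^{k+1}\bar c^k \widetilde g^k + (\text{controlled error})$ on the relevant subspaces.

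The main obstacle — and where the argument is genuinely delicate — is converting the chain homotopy information into the required dilatation inequality for the spectral density functions. The chain homotopy $K^*$ produces, on cocycles, only a relation of the form $g^kf^k = \unit + c^{k-1}K^k$, which is an identity \emph{modulo exact terms}; to extract spectral control on $\bar c^k$ one must carefully push the chain homotopy through the orthogonal projections. I would handle this by invoking the standard Gromov–Shubin spectral estimate: if $A,B$ are non-negative self-adjoint morphisms and there exist bounded morphisms $u,v$ with $A\le u^* B u$ and $B\le v^* A v$, then the spectral density functions satisfy the above dilatation relation, hence have the same Novikov–Shubin invariant and simultaneously enjoy (or fail) the determinant class property. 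Applying this to $A=(\bar c^k)^*\bar c^k$ and $B=(\bar d^k)^*\bar d^k$ via the compressed maps $\widetilde f^k,\widetilde g^k$ yields both (1) and (2).
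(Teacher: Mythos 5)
The paper gives no proof of this proposition; it is quoted verbatim from Gromov--Shubin, so there is no in-text argument to compare against. Your proof of the induced cohomology isomorphism is standard and correct.

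Your overall strategy for assertions (1) and (2) --- compare the spectral density functions of $\bar c^k := c^k|_{\clos(\im c^{k-1})^\perp}$ and $\bar d^k$ up to dilatation, using the compressions $\widetilde f^k = P_D^k f^k$, $\widetilde g^k = P_C^k g^k$ and pushing the homotopy through the projections --- is indeed the Gromov--Shubin route, and you identify the delicate point correctly. However, the key spectral lemma you invoke is stated backwards, and in any case the bare operator inequalities are not what closes the argument. The intertwining relations give $\bar d^k\widetilde f^k = f^{k+1}\bar c^k$ and $\bar c^k\widetilde g^k = g^{k+1}\bar d^k$, hence $(\widetilde f^k)^*(\bar d^k)^*\bar d^k\widetilde f^k \le \|f^{k+1}\|^2(\bar c^k)^*\bar c^k$ and its counterpart; that is, $u^*Bu \le CA$ and $v^*Av \le CB$ --- the opposite sign of what you wrote, and not deducible from chain data in your direction. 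Moreover, an inequality of this type with merely bounded $u$ yields no dilatation relation between $F_A$ and $F_B$, because $u$ need not carry a large spectral submodule of one operator onto a large submodule for the other. What actually works is the variational (sup) characterization of $F_f(\lambda)$ combined with the fact that, for small $\lambda$, the homotopy forces $\widetilde f^k$ to be bounded below on $L_\lambda := \im\,\chi_{[0,\lambda]}(|\bar c^k|)$: for $x\in L_\lambda$ one has $P_C^k g^k\widetilde f^k x = x + P_C^k K^{k+1}c^k x$, so $\|\widetilde f^k x\| \ge (1-\|K^{k+1}\|\lambda)\|g^k\|^{-1}\|x\|$, and hence $\widetilde f^k(L_\lambda)$ is a closed submodule of the same $\vnN(\Gamma)$-dimension on which $\|\bar d^k y\| \le C\lambda\|y\|$, giving $F_{\bar c^k}(\lambda) \le F_{\bar d^k}(C\lambda)$ directly (the reverse is symmetric). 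There is no identity of the form $\bar d^k = \widetilde f^{k+1}\bar c^k\widetilde g^k + (\text{controlled error})$: indeed $P_D^{k+1}$ kills $\im(d^k)$, so $\widetilde f^{k+1}\bar c^k\widetilde g^k$ vanishes and carries no information. The comparison runs through spectral subspaces and the variational formula, not through operator factorizations.
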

\begin{proposition}\cite[Lemma 3.44]{Lueck:book}\label{PROP2} Let $(C^*,c^*)$ and $(D^*,d^*)$ be two cochain complexes of Hilbert $\vnN(\Gamma)$-cochain complexes, both of finite type and of determinant class. Further, let $f^* \colon C^* \to D^*$ be a chain isomorphism between them. Then \begin{equation}
\log(T^{(2)}(C^*)) - \log(T^{(2)}(D^*)) = \sum_{k=0}^{\infty} (-1)^{k}   \log ( \vndet(f^k) 
) -  \sum_{k=0}^\infty (-1)^k \log  \left( \vndet(H^k(f^k)) \right). \end{equation}
\end{proposition}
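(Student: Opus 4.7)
The plan is to exploit the Hodge-type orthogonal decomposition of a finite-type Fredholm Hilbert $\vnN(\Gamma)$-cochain complex together with the multiplicativity of the Fuglede--Kadison determinant on short exact sequences (\cite[Theorem 3.14]{Lueck:book}), and reduce the claimed identity to a telescoping calculation.

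First I would record, for each degree $k$, the spectral-theoretic decompositions
\[ C^k = \overline{\im(c^{k-1})} \oplus \mathcal H^k_C \oplus (\ker c^k)^\perp, \qquad D^k = \overline{\im(d^{k-1})} \oplus \mathcal H^k_D \oplus (\ker d^k)^\perp, \]
in which $c^k$ and $d^k$ act as a weak isomorphism from the third summand onto the first summand of the next degree, while $\mathcal H^k_C \cong H^k(C^*)$ and $\mathcal H^k_D \cong H^k(D^*)$ under the canonical Hodge identification. Crucially, $\vndet(c^k)$ equals the Fuglede--Kadison determinant of the above restriction of $c^k$, and analogously for $d^k$.

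Next, using that $f^*$ is a chain isomorphism — so in particular each $f^k$ is bounded with bounded inverse and carries kernels and closed images onto their analogues in $D^*$ — I would observe that $f^k$ is block upper-triangular in the above decompositions, with diagonal blocks $A^k := f^k|_{\overline{\im c^{k-1}}}$, the middle block $H^k(f^k)$, and $B^k := f^k|_{(\ker c^k)^\perp}$. Since each diagonal block is itself an isomorphism of Hilbert $\vnN(\Gamma)$-modules (so $|{\cdot}|$ has spectrum bounded away from zero and is automatically of determinant class), the short-exact-sequence multiplicativity of $\vndet$ yields
\[ \log \vndet(f^k) = \log \vndet(A^k) + \log \vndet(H^k(f^k)) + \log \vndet(B^k). \]
Applying the same multiplicativity to the commutative square induced by $d^{k-1} f^{k-1} = f^k c^{k-1}$ between the four modules $(\ker c^{k-1})^\perp, (\ker d^{k-1})^\perp, \overline{\im c^{k-1}}, \overline{\im d^{k-1}}$ should give the ``chain-relation'' identity
\[ \log \vndet(A^k) + \log \vndet(c^{k-1}) = \log \vndet(d^{k-1}) + \log \vndet(B^{k-1}). \]

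Finally I would plug the first identity into $\sum_k (-1)^k \log\vndet(f^k)$ and use the second to eliminate the $A^k$ terms; the $B^{k-1}$ contributions telescope against the $B^k$ terms, the $\log\vndet(c^{k-1})$ and $\log\vndet(d^{k-1})$ sums reassemble with a sign shift into $\log T^{(2)}(C^*) - \log T^{(2)}(D^*)$, and the harmonic blocks produce the correction $\sum_k(-1)^k \log\vndet(H^k(f^k))$. The main technical obstacle I anticipate is justifying the two invocations of multiplicativity of $\vndet$ in the $\vnN(\Gamma)$-module setting, since this requires each block to be of determinant class and the relevant off-diagonal data to be controlled; however, for chain isomorphisms all diagonal blocks are invertible (hence automatically of determinant class) and the determinant-class hypothesis on $C^*,D^*$ takes care of the differentials, so no further analytic subtlety should arise.
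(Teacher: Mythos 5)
The paper does not include a proof of this proposition; it cites Lück's \emph{$L^2$-Invariants}, Lemma 3.44, whose proof proceeds exactly along the lines you lay out (Hodge-type splitting of each degree, block-triangularity of a chain isomorphism with respect to that splitting, multiplicativity of $\vndet$, and telescoping). Your argument is correct and essentially reproduces the cited proof. One small imprecision worth fixing: you write $B^k := f^k|_{(\ker c^k)^\perp}$, but this restriction is not itself a diagonal block — it is the full column, which in general has nonzero components in $\overline{\im(d^{k-1})}$ and $\mathcal H^k_D$; the diagonal block you actually need is $B^k := P_{(\ker d^k)^\perp}\circ f^k|_{(\ker c^k)^\perp}$. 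That $B^k$ so defined is invertible follows because $(f^k)^{-1}$ is itself a chain map, hence also block upper-triangular, so each diagonal block of $f^k$ has as inverse the corresponding diagonal block of $(f^k)^{-1}$. With this correction the chain relation $A^k\circ c^{k-1}|_{(\ker c^{k-1})^\perp} = d^{k-1}|_{(\ker d^{k-1})^\perp}\circ B^{k-1}$ holds on the nose, the two invocations of $\vndet$-multiplicativity are legitimate (the maps with dense image are $c^{k-1}$ and $B^{k-1}$, the injective ones are $A^k$ and $d^{k-1}$, and all four are of determinant class), and the telescoping terminates cleanly at both ends since the complexes are of finite type.
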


\section{Relative torsion}
We commence by introducing in order the main objects of this paper: Morse-Smale systems and their Morse-Smale, analytic, metric and relative $L^2$-torsion. \\
By a {\bfseries system} $\mathcal D = (E \downarrow M, g,h, X)$, we will always mean a set of data consisting of a flat, complex vector bundle $E \downarrow M$ over a smooth manifold $M$, along with a Riemannian metric $g$ on $M$, a Hermitian form $h$ on $E$ and $X$ either a vector field or a complex-valued function over $M$. \\
Given a uniform lattice $\Gamma < \Isom(M,g)$, such a system $\mathcal D$ is called {\bfseries $\Gamma$-invariant} if in addition, the isometric action of $\Gamma$ on $(M,g)$ leaves $X$ invariant and extends to an action of bundle isometries on the metric bundle $(E,h) \downarrow (M,g)$. Observe that $\Gamma$-invariant
systems on $M$ are precisely the lifts of systems defined over the compact quotient $M / \Gamma$.  \\
Throughout this chapter, we will frequently form {\itshape products} of systems: Given for $i=1,2$ two systems $(E_i \downarrow M_i,g_i,h_i, X_i)$ with $X_i$ either both vector fields or functions, one obtains a new system $(E_1 \hat \otimes E_2 \downarrow M_1 \times M_2, g_1 \oplus g_2, h_1 \hat \otimes h_2, X_1 + X_2)$, where $M_1 \times M_2$ is the product manifold equipped with the (direct) sum metric $g_1 \oplus g_2$,  $X_1 + X_2$ is the sum of the two vector fields or functions, and 
\begin{itemize} 
\item $E_1 \hat \otimes E_2 \downarrow M_1 \times M_2$ is defined to be the flat tensor product bundle $\pi_1^*E_1 \otimes \pi_2^*E_2 \downarrow M_1 \times M_2$, where $\pi_i: M_1 \times M_2 \to M_i$ denotes the projection onto the $i$-th factor. Here, the flat structure we choose is the canonical one induced by its flat factors $\pi^*_iE_i$. Moreover,  
\item $h_1 \hat \otimes h_2 \coloneqq \pi_1^* h_1 \otimes \pi_2^* h_2$ is the tensor product of the respective pullback Hermitian forms. 
\end{itemize} 
The main focus of our attention will be {\bfseries Morse-Smale systems}, which are by definition systems $\mathcal D =(E \downarrow M,g,h,\nabla_{g'}f)$ with $(f,g')$ a Morse-Smale pair, the latter of which we are now going to define: First of all, a pair $(f,g')$ with $f \colon M \to \reals$ a Morse function and $g'$ a Riemannian metric on $M$ is called a {\itshape Morse pair}. Let $\nabla_{g'}f \in \Gamma(TM)$ be the gradient vector field constructed from $f$ and $g'$ and let $\psi_t$ be the flow associated to the differential equation \begin{equation} \frac{\partial y}{\partial t} = - \nabla_{g'}f(y). \end{equation} Provided that both $f$ and $g'$ are lifted from a compact quotient, which we will assume throughout, it follows that $\psi_t$ is globally defined, i.e.\ for all $t \in \reals$. With $\Cr(f) \subset M$ denoting the set of critical points of $f$, define for each $p \in \Cr(f)$ the {\itshape stable}, respectively {\itshape unstable manifolds} \begin{align}
& W^-(p) \coloneqq  \{ x \in M \colon \lim_{t \to -\infty} \psi_t(x) = p  \}, \\ &
W^+(p) \coloneqq  \{ x \in M \colon \lim_{t \to +\infty} \psi_t(x) = p  \}. \end{align} Both $W^+(p)$ and $W^-(p)$ are smooth submanifolds of $M$, the latter being diffeomorphic to $\reals^{\ind(p)}$. Here, as everywhere else, $0 \leq \ind(p) \leq n$ denotes the {\itshape index} of the critical point $p$.  \begin{dfn}\label{dfnms} A Morse pair $(f,g')$ on $M$ is called a {\itshape Morse-Smale pair}, if all of the following conditions are satisfied: \begin{enumerate} \item For each pair $p,q \in \Cr(f)$, the manifolds $W^-(p)$ and $W^+(q)$ intersect transversally. \item $(f,g')$ is {\itshape locally trivial} at $\Cr(f)$. This means that: \begin{enumerate} \item For any $0 \leq k \leq n$ and any $p \in \Cr(f)$, there exists (pairwise disjoint) coordinate neighborhoods \[\phi_p: U_p  \to \begin{cases} \reals^n & \text{if} \; p \notin \partial M \\ \reals^n_{x_n \geq 0} & \text{if} \; p \in \partial M \end{cases} \] of $p$ with $\phi_p(p) = 0$  and such that we have \[(f \circ \phi_p^{-1})(x_1,\dots,x_n) = f(p) - \frac{1}{2}(x_1^2 + \dots x_{\ind(p)}^2) + \frac{1}{2}(x_{\ind(p)+1}^2 + \dots + x_n^{2}).\]
\item The pullback $\phi_p^*(g_{\reals^n})$ of the standard Euclidean metric on $\reals^n$ equals $g'|_{U_p}$. 
\end{enumerate} \end{enumerate}
If $\partial M \neq \emptyset$, we additionally assume that there exists $\kappa > 0$, along with a collar neighborhood $U$ of $\partial M$ and a diffeomorphism $\psi_{g'} \colon \partial M \times [0,\kappa) \to U$ coming from the normal exponential map induced by $g'$, so that either of the following two (mutually exclusive) conditions hold:
\begin{enumerate}[label=\roman*.] \item One has $(f \circ \psi_{g'})(p,t) = f|_{\partial M}(p) + t^2$ (in particular, $f|_{\partial M}$ is a Morse function on $\partial M$ with $\Cr(f|_{\partial M}) = \Cr(f) \cap \partial M$). In this case, we say that $(f,g')$ is {\bfseries of type \rom{1}}.
 \item One has $(f \circ \psi_{g'})(p,t) = b - t$ with $b = \max(f) \in \mathbb Z$ (In particular, $\Cr(f) \cap \partial M = \emptyset$). In this case, we say that $(f,g')$ is {\bfseries of type \rom{2}}.
 \end{enumerate}
\end{dfn}
It is a classic result that any compact manifold admits Morse-Smale pairs $(f,g')$, both of type \rom{1} and of type \rom{2}, see e.g.\ \cite[Theorem 6.6]{Hurt:Morse}. In fact, we will almost exclusively focus on type \rom{2} Morse-Smale pairs. That is because the methods employed to prove Theorem \ref{MAINTHEOREM} require that the critical points of a given Morse function are all interior, and thus only work for type \rom{2} Morse-Smale pairs. Conversely, the techniques used in other papers, whose results play an essential role in the proof of Theorem \ref{MAINTHEOREM2}, only work for type \rom{1} Morse-Smale pairs, which is why we have included them in the above definition.
\begin{dfn}\label{ADMISSIBLESYSTEM} A Morse-Smale system of the form $\mathcal D = (E \downarrow M, g,h, \nabla_{g'} f)$ will be called a {\bfseries type \rom{2} Morse-Smale system} if $(f,g')$ is a type \rom{2} Morse-Smale pair.
A type \rom{2} Morse-Smale system is {\bfseries of product form}, if
\begin{enumerate}[label=(\subscript{P}{{\arabic*}})]
\item {\itshape $g$ is a product near $\partial M$}: There exists a collar neighborhood $V$ of $\partial M$  that is the diffeomorphic image of the normal exponential map $\psi_g: \partial M \times [0,\epsilon) \to V$ induced by $g$, such that $\psi_g^*(g|_V) = g|_{\partial M} \oplus dt^2$, where $dt^2$ denotes the standard Euclidean metric on the half-open interval $[0,\epsilon)$.
\item The isometry $\psi_g$ further extends to a flat bundle isometry \[ \Psi: (E|_{\partial M} \hat \otimes \ceals \downarrow \partial M \times [0,\epsilon), h|_{\partial M} \hat \otimes 1_{\ceals}) \to (E|_{V} \downarrow V,h_V). \] Here, $E_{\ceals} \downarrow [0,\epsilon)$ is the trivial $1$-dimensional vector bundle over $[0,1)$ (with trivial connection), $E|_{\partial M} \hat \otimes E_{\ceals} \downarrow \partial M \times [0,\epsilon)$ denotes the flat, complex product bundle as introduced in the previous paragraph and $1_{\ceals}$ denotes the canonical constant Hermitian form on $E_{\ceals} \downarrow [0,\epsilon)$.
\end{enumerate}
A type \rom{2} Morse-Smale system of product form is called {\itshape weakly admissible}, if
\begin{enumerate}[label=(\subscript{A}{{\arabic*}})]
\item $M$ is compact. 
\item One has $g \equiv g'$ near $\Cr(f)$ and outside from a neighborhood of $\partial M$. 
\item The metric $h$ is parallel (see Definition \ref{unitunim}) in a neighborhood of $\Cr(f)$.
\end{enumerate}
Finally, a weakly admissible system $\mathcal D$ is called {\bfseries admissible} if the following extra compatibility condition is satisfied: 
\begin{enumerate}[label=(\subscript{A}{{4}})]
\item the restriction $h|_{\partial M}$ of $h$ to $\partial M$ is unimodular (see Definition \ref{unitunim}).
\end{enumerate}
A $\Gamma$-invariant system $\mathcal D =(E \downarrow M,g,h,\nabla_{g'}f)$ that is the lift of an admissible, respectively weakly admissible system on the compact quotient $M/\Gamma$ is called {\bfseries $\Gamma$-admissible}, respectively {\bfseries weakly $\Gamma$-admissible}. 
\end{dfn}
Observe that a weakly admissible system is a Morse-Smale system on a compact manifold $M$ with special {\itshape local} conditions on the Riemannian metric $g$ and Hermitian form $h$ near $\partial M$ and the critical points of $f$, while for an admissible system, we additionally demand a {\itshape global} condition on $h|_{\partial M}$. In particular, any flat bundle $E \downarrow M$ over a compact manifold fits into some weakly admissible system $\mathcal D = (E \downarrow M, g,h,\nabla_{g'}f)$ (by choosing an appropriate partition of unity), which can be chosen admissible if and only if the restriction bundle $E|_{\partial M} \downarrow \partial M$ is unimodular. \subsection{The Morse-Smale $L^2$-torsion $T^{MS}_{(2)}(E \downarrow M,h,\nabla_{g'}f)$}
Let $\mathcal D = (M,E,g,h,\nabla_{g'}f)$ be a Morse-Smale system with $M$ connected, $\widetilde{M}$ the universal cover of $M$ and $\widetilde{\mathcal D} = (\widetilde{M},\widetilde{E},\widetilde{g},\widetilde{h},\nabla_{\widetilde{g'}}f)$ the corresponding lifted system over $\widetilde{M}$. With $\Gamma \coloneqq \pi_1(M)$, it follows that $\widetilde{\mathcal D}$ is a $\Gamma$-invariant system. Let $\rho: \Gamma \to \GL(V)$ be the complex, finite-dimensional representation associated to the flat bundle $E \downarrow M$. Then, as a $\Gamma$-equivariant flat bundle, $\widetilde{E}$ is isomorphic to the trivial flat bundle $\widetilde{M} \times V$ with diagonal $\Gamma$-action given by $\gamma.(x,v) = (\gamma.x,\rho(\gamma)v)$. We fix one such isomorphism throughout. \\
As before, denote for each $p \in \Cr(\widetilde{f})$ by $W^-(p)$ and $W^+(p)$ the unstable, respectively stable manifold at $p$. Observe that we have $\gamma. W^-(p) = W^-(\gamma.p) \cong \reals^{\ind(p)}$ for each $\gamma \in \Gamma$, which allows us to fix a global orientation $O_p$ on each unstable manifold $W^-(p)$ in a $\Gamma$-invariant way. Together with the fact that $W^-(p)$ and $W^+(q)$ intersect transversely for each pair $p,q \in \Cr(\widetilde{f})$, we can construct as in \cite[Theorem 3.6]{Qin:Morse} integers $n(p,q) \in \mathbb Z$ for each pair $p,q \in \Cr(\widetilde{f})$ with $\ind(q) = \ind(p) + 1$, satisfying \begin{align} & n(p,q) = n(\gamma.p,\gamma.q) \; \; \forall \gamma \in \Gamma, \tag{\text{MS$1$}}\label{MS1} \\ & \forall p \in \Cr(\widetilde{f}) \colon \# \{q \in \Cr(\widetilde{f}) : \ind(q) = \ind(p) + 1 \wedge n(p,q) \neq 0 \} < \infty, \tag{\text{MS$2$}}\label{MS2}  \\& \forall q \in \Cr(\widetilde{f}) \colon \#\{p \in \Cr(\widetilde{f}): \ind(p) = \ind(q) - 1  \wedge n(p,q) \neq 0 \} < \infty,  \tag{\text{MS$3$}}\label{MS3} \\& \forall p \in \Cr(\widetilde{f}) \; \text{and} \; \forall q \in \Cr(\widetilde{f}) \; \text{with} \; \ind(q) = \ind(p) + 2 \colon \sum_{ \ind(r) = \ind(p)+1} n(p,r)n(r,q) = 0. \tag{\text{MS$4$}}\label{MORSEDIF}\end{align} In fact, under the conditions imposed on the pair $(f,g')$, it follows from \cite[Theorems 3.8,~3.9]{Qin:Morse} (see also \cite[Theorem 5.4.10 ,~Corollary 5.4.12]{Ich}) that \begin{enumerate} \item the set $\{W^-(p) \colon p \in \Cr(\widetilde{f}) \}$ is the collection of open cells of a $\Gamma$-CW-complex $X \subseteq \widetilde{M}$, so that \item the inclusion $X \hookrightarrow \widetilde{M}$ is a simple $\Gamma$-homotopy equivalence. Moreover, \item the integer $n(p,q)$ is precisely the degree of the attaching map of the cell $W^-(q)$ to the cell $W^-(p)$. \end{enumerate} Define $[O_p]$ to be the complex line generated by $O_p$ and the cochain complex of vector spaces \begin{align} C^*(\widetilde{M},\nabla_{\widetilde{g'}}\widetilde{f},\widetilde{E}) \coloneqq \bigoplus_{p \in \Cr(\widetilde{f})} [O_p] \otimes_{\ceals} V \; , \; C^{k}(\widetilde{M},\nabla_{\widetilde{g'}}\widetilde{f},\widetilde{E}) \coloneqq \bigoplus_{\ind(p) = k} [O_p] \otimes_{\ceals} V \end{align} with boundary map 
\begin{align} \partial^*_{MS}: C^*(\widetilde{M},\nabla_{\widetilde{g'}}\widetilde{f},\widetilde{E})\to C^{*+1}(\widetilde{M},\nabla_{\widetilde{g'}}\widetilde{f},\widetilde{E}) \nonumber  \end{align} being the unique $\ceals$-linear extension of the assignment \begin{align} \partial^*_{MS}( [O_p] \otimes v ) \coloneqq \sum_{\ind(q) = \ind(p) + 1} n(p,q) \cdot [O_q] \otimes v. \end{align} By \ref{MS2}--\ref{MORSEDIF}, $\partial^*_{MS}$ is well-defined and satisfies $\partial^{k+1}_{MS} \circ \partial^k_{MS} = 0$ for each $0 \leq k \leq n-1$. Furthermore, the respective $\Gamma$-actions on $\widetilde{M}$ and $V$ intertwine to produce a $\Gamma$-action on $C^*(\widetilde{M},\nabla_{\widetilde{g}'}\widetilde{f},\widetilde{E})$ given by \begin{equation}\label{GAMMAAC} \gamma.( [O_p] \otimes v ) \coloneqq [O_{\gamma.p}] \otimes \rho(\gamma)v. \end{equation} Due to \ref{MS1}, it follows that $\partial^*_{MS}$ is $\Gamma$-equivariant. Now recall the $\Gamma$-equivariant Hermitian form $\widetilde{h}$, which is part of the system $\widetilde{\mathcal D}$. Equipping the total space $C^*(\widetilde{M},\nabla_{\widetilde{g}'}\widetilde{f},\widetilde{E})$ with the inner product structure given by the direct sum of inner products induced by $\widetilde{h}$ at each fiber, the $\Gamma$-action \ref{GAMMAAC} becomes an action by isometries. Taking the corresponding $L^2$-completion, one obtains a Hilbert $\vnN(\Gamma)$-cochain complex of finite type, which we will denote by $C_{(2)}^*(\widetilde{M},\nabla_{\widetilde{g}'}\widetilde{f},\widetilde{E},\widetilde{h})$. In fact, each module $C_{(2)}^k(\widetilde{M},\nabla_{\widetilde{g}'}\widetilde{f},\widetilde{E},\widetilde{h})$ is isomorphic to $L^2(\Gamma)^{m_k} \otimes_{\ceals} V$, where $m_k \in \mathbb N$ is the number of $\Gamma$-cosets of the set $\{ p \in \Cr(\widetilde{f}) \colon \ind(p) = k \}.$ \begin{dfn}\label{MSCPLX} $C_{(2)}^*(\widetilde{M},\nabla_{\widetilde{g}'}\widetilde{f},\widetilde{E},\widetilde{h})$ is called the {\itshape $L^2$-Morse-Smale cochain complex} induced by the system $\mathcal D$. For $0 \leq k \leq n$, we define the $k$-th $L^2$-Morse-Smale cohomology \begin{equation} H^k_{(2)}(M,\nabla_{g'}f,E,h) \coloneqq H^k\left(C_{(2)}^*(\widetilde{M},\nabla_{\widetilde{g}'}\widetilde{f},\widetilde{E},\widetilde{h})\right) \end{equation} and the {\bfseries c-$L^2$-Betti number} of the pair $(M,E)$ \begin{equation} {\mathfrak b}^{(2)}_{k}(M,E) \coloneqq \vndim \left(H^k_{(2)}(M,\nabla_{g'}f,E,h)\right) \in [0,\infty) \end{equation} as the von Neumann dimension of the $L^2$-Morse-Smale cohomology (throughout, the prefix ''c'' stands for {\itshape combinatorial}) and similarly the $k$-th {\bfseries c-Novikov-Shubin invariant} \begin{equation} \alpha^{Top}_k(M,E) \coloneqq \alpha_k \left(C_{(2)}^*(\widetilde{M},\nabla_{\widetilde{g}'}\widetilde{f},\widetilde{E},\widetilde{h})\right). \end{equation} We say that $(M,E)$ is {\itshape c-$L^2$-acyclic} if ${\mathfrak b}^{(2)}_k(M,\rho) = 0$ for all $0 \leq k \leq n$. We say that $(M,E)$ is {\itshape of c-determinant class} if the complex $C^*_{(2)}(\widetilde{M},\nabla_{\widetilde{g}'}\widetilde{f},\widetilde{E},\widetilde{h})$ is of determinant class. If $(M,E)$ is of c-determinant class, we define the {\bfseries $L^2$-Morse-Smale torsion of the system $\mathcal D$} as \begin{equation} T^{MS}_{(2)}(E \downarrow M,h,\nabla_{g'}f) \coloneqq T \left(C_{(2)}^*(\widetilde{M},\nabla_{\widetilde{g}'}\widetilde{f},\widetilde{E},\widetilde{h}) \right) = \prod_{k=0}^{n} \vndet(\partial^{k}_{MS})^{(-1)^{k+1}} \in \reals_{>0}. \end{equation} \end{dfn} 
As mentioned previously, the Morse-Smale cochain complexes $C^*(\widetilde{M},\nabla_{\widetilde{g}_1'}\widetilde{f}_1,\widetilde{E})$ and $C^*(\widetilde{M},\nabla_{\widetilde{g}_2'}\widetilde{f}_2,\widetilde{E})$ coming from two distinct Morse-Smale systems $\mathcal D_1 = (M,E,g_1,h_1,\nabla_{g_1'}f_1)$ and  $\mathcal D'= (M,E, g_2,  h_2 \nabla_{g_2'} f_2)$ defined over a fixed pair $(M,E)$ are the cellular cochain complexes of two $\Gamma$-homotopy equivalent subcomplexes of $\widetilde{M}$. By picking a cellular approximation of an explicit homotopy equivalence, one can easily show that the $L^2$-Morse-Smale complexes $C^*_{(2)}(\widetilde{M},\nabla_{\widetilde{g}_1'}\widetilde{f}_1,\widetilde{E},\widetilde{h}_1)$ and $C^*_{(2)}(\widetilde{M},\nabla_{\widetilde{g}_2'}\widetilde{f}_2,\widetilde{E},\widetilde{h}_2)$ are chain homotopy equivalent. By Proposition \ref{PROP1}, it follows that the c-$L^2$-betti numbers ${\mathfrak b}^{(2)}_k(M,E)$, the c-Novikov-Shubin invariants $\alpha^{Top}_k(M,E)$, as well as the c-determinant class condition do not depend on the explicit choices of metrics and Morse Smale function. \\ On the other hand, the $L^2$-Morse-Smale torsion $T^{MS}_{(2)}(E \downarrow M,h,\nabla_{g'}f)$ {\itshape does} in general depend on the choices of Hermitian forms and Morse-Smale pairs (although it is entirely independent of the Riemannian metric on $M$). However, under the assumption that $E \downarrow M$ is a unimodular bundle, $\chi(M) = 0$, and that $(M,E)$ is c-$L^2$-acyclic and of of c-determinant class defined as above, there exists a {\itshape topological $L^2$-torsion} \begin{equation} T^{Top}_{(2)}(M,E) \in \reals_{\geq 0}. \end{equation} It can be defined similarly like $T^{MS}_{(2)}(E \downarrow M,h,\nabla_{g'}f)$, with the aid of {\itshape any} given CW-structure on $M$ and any fixed inner product on $V$, see \cite[Definition 5.2.5,~Theorem 5.3.12]{Ich}. The following key result establishes a connection between the {\itshape a priori} different Morse-Smale torsions that come from distinct Morse-Smale systems and $T^{Top}_{(2)}(M,E)$. \begin{theorem}\cite[Theorem 5.4.15]{Ich}\label{TOPTOR} Assume that $E \downarrow M$ is a unimodular bundle over a compact manifold and that $\chi(M) = 0$. Let $\mathcal D = (M,E,g,h,\nabla_{g'}f)$ be an associated Morse-Smale system with $h$ unimodular and assume that $E \downarrow M$ is c-$L^2$-acyclic and of c-determinant class. Then, one has \[ T^{Top}_{(2)}(M,E) = T^{MS}_{(2)}(E \downarrow M,h,\nabla_{g'}f). \]  \end{theorem}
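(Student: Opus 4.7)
The strategy is to compute the topological $L^2$-torsion $T^{Top}_{(2)}(M,E)$ using the $\Gamma$-CW-structure on $\widetilde M$ furnished by the unstable manifolds of $(f,g')$ -- recall that the inclusion of this subcomplex into $\widetilde M$ is a simple $\Gamma$-homotopy equivalence, so it is an admissible choice of CW-structure -- together with an arbitrary fixed inner product $\langle\cdot,\cdot\rangle_V$ on $V$. Denote the resulting Hilbert $\vnN(\Gamma)$-cochain complex by $C^*_{\text{top},(2)}$. By \cite[Definition 5.2.5, Theorem 5.3.12]{Ich}, we have $T^{Top}_{(2)}(M,E)=T^{(2)}(C^*_{\text{top},(2)})$. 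The complex $C^*_{MS,(2)}\coloneqq C^*_{(2)}(\widetilde M,\nabla_{\widetilde g'}\widetilde f,\widetilde E,\widetilde h)$ defining $T^{MS}_{(2)}(E \downarrow M,h,\nabla_{g'}f)$ shares with $C^*_{\text{top},(2)}$ the same underlying $\ceals[\Gamma]$-module $\bigoplus_{p\in\Cr(\widetilde f)}[O_p]\otimes_{\ceals}V$ and the same coboundary $\partial^*_{MS}$: the two complexes differ only in their Hilbert $\vnN(\Gamma)$-module structure, namely in the inner product placed at each critical-point fibre ($\widetilde h_p$ versus $\langle\cdot,\cdot\rangle_V$). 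Since $V$ is finite-dimensional, the two inner products are equivalent, and the identity $\mathrm{id}^*\colon C^*_{MS,(2)}\to C^*_{\text{top},(2)}$ is a chain isomorphism of Hilbert $\vnN(\Gamma)$-cochain complexes.

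Applying Proposition \ref{PROP2} to $\mathrm{id}^*$ yields
\[
\log T^{(2)}(C^*_{MS,(2)})-\log T^{(2)}(C^*_{\text{top},(2)})=\sum_{k=0}^{n}(-1)^{k}\log\vndet(\mathrm{id}^{k})-\sum_{k=0}^{n}(-1)^{k}\log\vndet(H^{k}(\mathrm{id}^*)).
\]
Because $(M,E)$ is c-$L^2$-acyclic, each $H^k$ vanishes, so the second sum is zero. The task reduces to showing that the first sum also vanishes.

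To compute $\vndet(\mathrm{id}^k)$, I would fix $\Gamma$-orbit representatives $p_1,\dots,p_{m_k}$ among the index-$k$ critical points and, in the trivialisation $\widetilde E\cong\widetilde M\times V$, employ the $\Gamma$-equivariant identification $[O_{\gamma.p_i}]\otimes v\mapsto\delta_\gamma\otimes\rho(\gamma)^{-1}v$ of the $p_i$-orbit block of $C^k$ with $L^2(\Gamma)\otimes V$. The $\Gamma$-invariance relation $\widetilde h_{\gamma.p}(\rho(\gamma)v,\rho(\gamma)w)=\widetilde h_p(v,w)$ then forces the transported inner product to be the constant tensor product $\ell^2(\Gamma)\otimes\widetilde h_{p_i}$. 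A direct trace computation on the block $(L^2(\Gamma)\otimes V,\ell^2\otimes\widetilde h_{p_i})\xrightarrow{\mathrm{id}}(L^2(\Gamma)\otimes V,\ell^2\otimes\langle\cdot,\cdot\rangle_V)$ gives, relative to any fixed basis of $V$,
\[
\log\vndet(\mathrm{id}^k)=\tfrac{1}{2}\sum_{\ind(p_i)=k}\bigl(\log\det_{\ceals}\langle\cdot,\cdot\rangle_V-\log\det_{\ceals}\widetilde h_{p_i}\bigr).
\]

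Unimodularity enters now. Because $h$ is unimodular, the volume form induced by $h$ on $\det E$ is parallel; lifting to $\widetilde M$ and using the flat trivialisation, the function $x\mapsto\det_{\ceals}\widetilde h_x$ is \emph{constant} on $\widetilde M$. Hence every summand above equals the same constant $C\coloneqq\tfrac{1}{2}(\log\det_{\ceals}\langle\cdot,\cdot\rangle_V-\log\det_{\ceals}\widetilde h)$ independent of $p_i$, whence
\[
\sum_{k=0}^{n}(-1)^{k}\log\vndet(\mathrm{id}^{k})=C\sum_{k=0}^{n}(-1)^{k}m_k=C\cdot\chi(M)=0,
\]
by Poincaré--Hopf and the assumption $\chi(M)=0$. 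Combining with Proposition \ref{PROP2} yields the claimed equality. The main obstacle is the bookkeeping in the third step: transporting $\widetilde h$ concretely to a constant inner product on $L^2(\Gamma)\otimes V$ via the $\Gamma$-equivariant identification, and computing the Fuglede--Kadison determinant of the identity between two such Hilbert $\vnN(\Gamma)$-module structures in terms of the classical complex determinant on $V$.
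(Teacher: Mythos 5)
The paper does not present a proof of this theorem in-text (it cites \cite[Theorem 5.4.15]{Ich}), so I evaluate your argument on its own terms. Your proposal is correct, and it is essentially the natural argument that the cited result relies on: interpret $T^{Top}_{(2)}$ via the Morse--Smale $\Gamma$-CW decomposition (licensed by simple-homotopy invariance), observe that the identity gives a chain isomorphism between $C^*_{MS,(2)}$ and the topological complex that differs only in the inner-product data, apply Proposition \ref{PROP2}, kill the cohomology contribution by $L^2$-acyclicity, and reduce the remaining Fuglede--Kadison terms to a constant times $\chi(M)$ via the $\Gamma$-equivariant block structure and unimodularity of $h$.

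Two small points worth making explicit when you write this up. First, the orbit-block transport computation should note that the Fuglede--Kadison determinant is multiplicative over \emph{orthogonal} direct sums, and that distinct $\Gamma$-orbit blocks are orthogonal in \emph{both} inner products (because the critical points are distinct), so that $\vndet(\mathrm{id}^k)$ really factors as claimed. Second, the claim that $x\mapsto\det_{\ceals}\widetilde h_x$ is constant on $\widetilde M$ uses that in the fixed flat trivialisation the connection is the trivial one, so $\theta(\widetilde h)$ is literally $d\log\det\widetilde H$ for the matrix $\widetilde H_x$ of $\widetilde h_x$ in a constant basis of $V$; lifting $\theta(h)\equiv 0$ and using connectivity of $\widetilde M$ then gives constancy. (Compatibility with the $\Gamma$-equivariance of $\widetilde h$ uses $|\det\rho(\gamma)|=1$, which is exactly the unimodularity of the bundle assumed in the statement — this is why both hypotheses, unimodular $E$ and unimodular $h$, are genuinely used.) With those two remarks in place the argument is complete and matches the paper's framework.
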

\subsection{The analytic $L^2$-torsion $T^{An}_{(2)}(E \downarrow M,g,h)$}
For a Morse-Smale system $\mathcal D$ as above, we now explain the construction of the $L^2$-de Rham complex $\Omega_{(2)}^*(\widetilde{M},\widetilde{E},\widetilde{g},\widetilde{h})$ as well as the computation of the $L^2$-analytic torsion $T^{An}_{(2)}(E \downarrow M,g,h)$. None of these considerations will take the Morse-Smale pair $(f,g')$ into account. \\
To begin with, let  \begin{equation} \Omega^*(\widetilde{M},\widetilde{E}) \cong \Omega^*(\widetilde{M}) \otimes_{\ceals} \Gamma(\widetilde{E}) \end{equation} be the twisted de Rham complex of $\widetilde{E}$-valued forms, with differential \begin{equation} d^* \colon  \Omega^*(\widetilde{M},\widetilde{E}) \to \Omega^{*+1}(\widetilde{M},\widetilde{E}) \end{equation} induced by the flat connection on $\widetilde{E}$. Notice that the (fixed) flat identification $\widetilde{E} \cong \widetilde{M} \times V$ allows us to naturally identify $\Omega^*(\widetilde{M},\widetilde{E})$ with $\Omega^*(\widetilde{M}) \otimes_{\ceals} C^\infty(\widetilde{M},V)$. The canonical $\Gamma$-action on $\Omega^*(\widetilde{M})$ given by pullbacks and the natural $\Gamma$-action on $\Gamma(\widetilde{E}) \cong C^\infty(\widetilde{M},V)$ induced by the linear representation $\rho: \Gamma \to \GL(V)$ intertwine to produce a $\Gamma$-action on $\Omega^*(\widetilde{M},\widetilde{E})$, with respect to which $d^*$ becomes $\Gamma$-equivariant. 
For $x \in \mathcal{M}$, denote by $\langle \; \;, \;\; \rangle_x$ the inner product at the fiber vector space $(\Lambda^* T^*\widetilde{M} \otimes \widetilde{E})_x$ naturally derived from the pair $g$ and $h$. Let $\mu_g \in \Omega^n(\widetilde{M})$ be the volume form induced by $\widetilde{g}$. Restricting to the $\Gamma$-invariant subspace $\Omega^*_c(\widetilde{M},\widetilde{E})$ of compactly supported forms, the integration over the pointwise inner product \begin{align} \langle \;\; , \;\; \rangle \colon \Omega^*_c(\widetilde{M},\widetilde{E}) \times \Omega^*_c(\widetilde{M},\widetilde{E}) \to \ceals, \\ \langle f,g \rangle \coloneqq \int_{\widetilde{M}} \langle f(x),g(x) \rangle_x d\mu_g(x) \label{IP}\end{align} determines itself an inner product on $\Omega^*_c(\widetilde{M},\widetilde{E})$, with respect to which the $\Gamma$-action on $\Omega^*_c(\widetilde{M},\widetilde{E})$ is by isometries. \\
Let $T^*\partial \widetilde{M}$ the cotangent bundle over the boundary ${\partial \widetilde{M}}$. As usual, the Riemannian metric $g$ induces an {\itshape orthogonal} decomposition of the restricted cotangent bundle $T^*\widetilde{M}|_{\partial \widetilde{M}} = T^*\partial \widetilde{M} \oplus N^* \partial \widetilde{M}$, where $N^*\partial \widetilde{M} \downarrow \partial \widetilde{M}$ denotes the $1$-dimensional conormal bundle over $\widetilde{M}$. For each $x \in \partial \widetilde{M}$, each $0 \leq k \leq n$ and each $\omega \in \Omega^k(\widetilde{M},\widetilde{E})$, the vector $\omega(x) \in  (\Lambda^k T^*{\widetilde{M}} \otimes \widetilde{E})_x$ consequently decomposes orthogonally into a {\itshape tangential} and a {\itshape normal} part: 
\begin{equation} \omega(x) = \vec{t}\omega(x) + \vec{n}\omega(x) \in ( \Lambda^k T^*{\partial \widetilde{M}} \otimes \widetilde{E})_x  \oplus  (\Lambda^{k-1} T^*\partial \widetilde{M}  \otimes N^* \partial \widetilde{M}\otimes \widetilde{E})_x. \end{equation} Let \begin{align} &\delta^* \colon \dom^*(\delta^*) \to \dom^{*-1}(\delta^{*-1}), \\& \dom^*(\delta^*) \coloneqq \{ \sigma \in \Omega_c^*(M,E) \colon \vec{n}\sigma = 0 \} \end{align} be the formal adjoint of $d^*$ with respect to the inner product \ref{IP} and with absolute boundary conditions. Define the {\itshape Hodge-Laplacian} with absolute boundary conditions \begin{align}& \Delta_* \coloneqq \delta^{*+1} d^* + d^{*-1} \delta^* \colon \dom(\Delta_*) \to \dom(\Delta_*), \\& \dom(\Delta_*) \coloneqq \{ \omega \in \Omega^*_c(\widetilde{M},\widetilde{E}) \colon \vec{n}\omega = \vec{n} d^* \omega = 0 \} \subseteq\Omega^{*}(\widetilde{M},\widetilde{E}). \end{align} 
Let $\Omega^*_{(2)}(\widetilde{M},\widetilde{E}) = \Omega^*_{(2)}(\widetilde{M},\widetilde{E},\widetilde{g},\widetilde{h})$ be the $L^2$-completion of $\Omega^*_c(\widetilde{M},\widetilde{E})$ with regards to the previously defined inner product. Together with the extension of the isometric $\Gamma$-action on $\Omega_c(\widetilde{M},\widetilde{E})$ , $\Omega^*_{(2)}(\widetilde{M},\widetilde{E})$ becomes a Hilbert $\vnN(\Gamma)$-module (although not a finitely generated one). Moreover, the restricted operators $d^*$ and $\Delta_*$ each admit unbounded closed, $\Gamma$-equivariant extensions (denoted by the same symbol), which can therefore be regarded as morphisms between the corresponding Hilbert $\vnN(\Gamma)$-modules. We obtain a cochain complex of Hilbert $\vnN(\Gamma)$-modules \begin{equation} 0 \rightarrow \Omega_{(2)}^0(\widetilde{M},\widetilde{E}) \xrightarrow{d^0} \Omega_{(2)}^1(\widetilde{M},\widetilde{E}) \xrightarrow{d^1} \dots \xrightarrow{d^{n-1}} \Omega_{(2)}^n(\widetilde{M},\widetilde{E}) \rightarrow 0, \end{equation} called the {\itshape $L^2$-de Rham complex} induced by the system $\mathcal D$.\\
For each $0 \leq k \leq n$, the (closed extension of the) formal adjoint $\delta^k$ (with absolute boundary conditions) is in fact the Hilbert space adjoint of the differential $d^k$ \cite[Proposition 3.4.6]{Ich} . Furthermore, the (closed extension of the) Laplace operator $\Delta_k$ is positive and self-adjoint \cite[Theorem 3.4.1]{Ich}. With $t$ ranging over $\reals_{> 0}$, let $e^{-t\Delta_k} \colon \Omega^k_{(2)}(\widetilde{M},\widetilde{E}) \to \Omega^k_{(2)}(\widetilde{M},\widetilde{E})$ be the $1$-parameter, monotonically decreasing family of positive {\itshape heat operators} associated to $\Delta_k$, defined via the spectral theorem applied to $\Delta_k$. Each $e^{-t\Delta_k}$ is a {\itshape bounded} morphism of Hilbert $\vnN(\Gamma)$-modules that is also of trace class, i.e.\ satisfies $\vntr(e^{-t\Delta_k}) < \infty$. More precisely, each $e^{-t\Delta_k}$ possesses a smooth integral kernel $e^{-t\Delta_k}(\;\; , \;\;) \in C^\infty(\widetilde{M} \times \widetilde{M},\End(V))$, such that for any arbitrary fundamental domain $\mathcal F \subseteq \widetilde{M}$ for the $\Gamma$-action on $\widetilde{M}$, one has the equality \begin{equation} \vntr(e^{-t\Delta_k}) = \int_{\mathcal F} \tr(e^{-t\Delta_k} (x,x)) d\mu_g(x), \end{equation} see \cite[Proposition 4.16]{Atiyah:El}.
By dominated convergence, we obtain for each $0 \leq k \leq n$ that \begin{equation}\vndim(\ker(\Delta_k)) =  \lim_{t \to \infty} \vntr(e^{-t\Delta_k}) \in \reals_{\geq 0}. \end{equation} In fact, the closed subspace  $\ker(\Delta_k) \subseteq \Omega_{(2)}^k$ of {\itshape $L^2$-integrable harmonic $k$-forms} is not only a finite-dimensional Hilbert $\vnN(\Gamma)$-module, but also consists entirely of smooth forms and is isomorphic to the $k$-th $L^2$-cohomology \begin{equation} \mathcal H_{(2)}^k(M,E,g,h) \coloneqq H^k(\Omega^*(\widetilde{M},\widetilde{E},\widetilde{g},\widetilde{h})) \end{equation} of $\Omega^*(\widetilde{M},\widetilde{E},\widetilde{g},\widetilde{h})$ \cite[Propositions 3.4.2,~4.1.33]{Ich}. We define the $k$-th {\itshape a-$L^2$-Betti number} as\begin{equation} {\mathbf b}^{(2)}_k(M,E) \coloneqq \vndim(\mathcal H_{(2)}^k(M,E,g,h)) = \vndim(\ker(\Delta_k)).\end{equation} Throughout, the prefix ''a'' stands for {\itshape analytic}. \\ Similarly, the restriction $\Delta_k^\perp \coloneqq \Delta_k|_{\ker(\Delta_k)^\perp}$ is a self-adjoint morphism of Hilbert $\vnN(\Gamma)$-modules, so that \begin{equation} \vntr(e^{-t\Delta_k^\perp}) = \vntr(e^{-t\Delta_k}) - {\mathbf b}^{(2)}_{k}(M,E) \in \reals_{\geq 0} \end{equation} for each $t > 0$.
For $0 \leq k \leq n$ and $s \in \ceals$, the (truncated) {\itshape zeta-function} $\zeta_k(s)$ is defined as the formal expression \begin{equation} \zeta_k(s) \coloneqq \Gamma(s)^{-1}\int_{0}^{1} t^{s-1}\vntr(e^{-t\Delta_k^\perp}) dt. \end{equation} Here, $\Gamma(s)^{-1}$ denotes the (entire) inverse {\itshape gamma function}, which should not be confused with the {\itshape group} $\Gamma$. Due to the rational asymptotic behavior of $\vntr(e^{-t\Delta_k^\perp})$ near $t=0$ \cite[Lemma 1.3]{Lück:hyp} (see also \cite[Theorem 4.3.2]{Ich}), there exists a constant $C > 0$, such that $\zeta_k(s)$ determines a holomorphic function on the domain $\{s \in \ceals \colon \Re(s) >> C \}$ that extends to a meromorphic function on all of $\ceals$ with $s = 0$ being a regular point. 
\begin{dfn}\label{DRCPLX} Let $\mathcal D = (M,E,g,h,\nabla_{g'}f)$ be a Morse-Smale system as above. For $0 \leq k \leq n$, the $k$-th {\bfseries a-Novikov-Shubin} invariant $\alpha^{An}_{k}(M,E) \in [0,\infty] \cup \{\infty^+\}$ is defined as \begin{equation} \alpha_k^{An}(M,E) \coloneqq \alpha_k \left( \Omega^*_{(2)}(\widetilde{M},\widetilde{E},\widetilde{g},\widetilde{h}) \right). \end{equation} The pair $(M,E)$ is said to be {\bfseries of a-determinant class} if the $L^2$-de Rham complex $\Omega^*_{(2)}(\widetilde{M},\widetilde{E},\widetilde{g},\widetilde{h})$ is of determinant class. If $(M,E)$ is of a-determinant class, we can define the {\bfseries analytic $L^2$-torsion} $T^{An}_{(2)}(E \downarrow M,g,h) \in \reals_{> 0}$ of the system as 
\begin{equation} \log(T^{An}_{(2)}(E \downarrow M,g,h)) \coloneqq \sum_{k=0}^{n} \frac{k}{2}(-1)^{k+1} \left( \frac{d}{ds} \zeta_k(s)|_{s=0} + \int_{1}^\infty t^{-1} \vntr(e^{-t\Delta_k^\perp}) dt \right). \end{equation}
\end{dfn}
The a-determinant class condition of $(M,E)$ says that for each $0 \leq k \leq n$, the restriction $d^k|_{\im(d^{k-1})^\perp}$ is of determinant class. By \cite[Lemma 3.30]{Lueck:book}, this is equivalent to the operator $\Delta_k^\perp$ being of a-determinant class for each $0 \leq k \leq n$, which in turn \cite[Lemma 3.139]{Lueck:book} implies that $\int_{1}^\infty t^{-1} \vntr(e^{-t\Delta_k^\perp})dt < \infty$ for each $0 \leq k \leq n$, showing that $T^{An}_{(2)}(E \downarrow M,g,h)$ is well-defined. 
Up to bounded, $\Gamma$-equivariant isomorphisms, the Hilbert $\vnN(\Gamma)$-cochain complex $\Omega^*_{(2)}(\widetilde{M},\widetilde{E},\widetilde{g},\widetilde{h})$ is independent of the particular choice of $g$ and $h$. Therefore, neither ${\mathbf b}_k^{(2)}(M,E)$ nor the a-determinant class conditions depend on $g$ or $h$. However, in the general case that we concern ourselves with (i.e.\ when $\partial M \neq \emptyset$), the quantity $T^{An}_{(2)}(E \downarrow M,g,h)$ does depend on both $g$ and $h$. The precise metric anomalies, to be presentend in the next section, are of fundamental importance for this paper. 
\subsection{The metric $L^2$-torsion $T^{Met}_{(2)}(\mathcal D)$ and the relative $L^2$-torsion $\mathcal R(\mathcal D)$}
We now describe for a general Morse-Smale system $\mathcal D= (E \downarrow M,g,h,\nabla_{g'} f)$ with $M$ compact the construction of the {\bfseries relative $L^2$-torsion $\mathcal R(\mathcal D) \in \reals$}, provided that $E \downarrow M$ is determinant class.
To begin with, we are going to define new norms on $\Omega^*_c(\widetilde{M},\widetilde{E},\widetilde{g},\widetilde{h})$, the de Rham complex of compactly supported forms. Throughout, we will denote by $|| \; \; \;||_0$ the $L^2$-norm defined in the previous section. Assume first that that $\partial \widetilde{M} = \emptyset$. In this case, we define for each $s \in \reals_{>0}$ the $s$-th Sobolev norm \begin{equation} ||\omega||_s \coloneqq ||(1+\Delta_k)^{s/4}\omega||_0, \; \; \omega \in \Omega^k_c. \end{equation} In case that $\partial \widetilde{M} \neq \emptyset$, the define for each integer $p \in \mathbb N$ the $p$-th Sobolev norm (with absolute boundary conditions) inductively as \begin{equation} ||\omega||_p^2 \coloneqq ||\omega||_{p-1}^2 + ||d^k\omega||_{p-1}^2 + ||\delta^{k-1} \omega||_{p-1}^2 + ||\vec{n} \omega||_{p - 1/2}^2 \; \; \omega \in \Omega^k_c. \end{equation} For fixed $0 \leq k \leq n$ and integer $p \in \mathbb N_0$, the $L^2$-completion $\mathcal W^{k}_{p}(\widetilde{M},\widetilde{E},\widetilde{g},\widetilde{h})$ is called the {\itshape $p$-th Sobolev space of $k$-forms}. Just like in the case $p = 0$, the $\Gamma$-action on $\Omega^k_c$ extends to an isometric $\Gamma$-action on $W^k_p$, turning it into a Hilbert $\vnN(\Gamma)$-module. Crucially, we obtain {\itshape bounded} extensions \begin{align} & d^k \colon \mathcal W^k_{p+1}(\widetilde{M},\widetilde{E},\widetilde{g},\widetilde{h}) \to \mathcal W^{k+1}_{p}(\widetilde{M},\widetilde{E},\widetilde{g},\widetilde{h}) \end{align} for each $0 \leq k \leq n-1$, which is why we can define for fixed $l \geq n$ a cochain complex of Hilbert $\vnN(\Gamma)$-modules \begin{equation}\sob_{l-*}^*(\widetilde{M},\widetilde{E},\widetilde{g},\widetilde{h})  \colon \sob_{l}^0(\widetilde{M},\widetilde{E},\widetilde{g},\widetilde{h}) \xrightarrow{d^0} \sob_{l-1}^1(\widetilde{M},\widetilde{E},\widetilde{g},\widetilde{h}) \xrightarrow{d^1} \dots \xrightarrow{d^{n-1}} \sob^n_{l-n}(\widetilde{M},\widetilde{E},\widetilde{g},\widetilde{h}). \end{equation} 
Now recall the $L^2$-Morse-Smale complex $C^*_{(2)}(\widetilde{M},\nabla_{\widetilde{g}'}\widetilde{f},\widetilde{E},\widetilde{h})$ and fix an integer  $l > 3n/2 + 1$. Then, it follows from the Sobolev inequality that one has $\sigma \in C^1 \cap L^2$ for each $\sigma \in \mathcal W^k_l$ and each $0 \leq k \leq n$. Together with our fixed isomorphism $\widetilde{E} \cong \widetilde{M} \times V$, we deduce that for each $p \in \Cr(\widetilde{f})$ with $\ind(p) = k$, the integral $\int_{W^{-}(p)} \sigma \in V$ over the $k$-dimesional unstable manfiold $W^-(p)$ is well-defined. In fact, it holds that $\sum_{\ind(p) = k} ||\int_{W^{-}(p)} \sigma||^2_{\widetilde{h}_p} < \infty$, e.g.\ \cite[Lemma 3.2]{Dodziuk}. Therefore, we can define a map between graded Hilbert $\vnN(\Gamma)$-modules
 \begin{align}\label{DRINT} & \Int^* \colon  \mathcal W_{l-*}^*(\widetilde{M},\widetilde{E},\widetilde{g},\widetilde{h}) \to C^*_{(2)}(\widetilde{M},\nabla_{\widetilde{g}'}\widetilde{f},\widetilde{E},\widetilde{h}), \\& \Int^k(\sigma) \coloneqq \sum_{\substack{p \in \Cr(f) \\ \ind(p) =k}}  [O_p] \otimes \left(\int_{W^-(p)} \sigma \right)\hspace{.5cm} \sigma \in \sob^{k}_{l-k}, \end{align}  
given by integration of Sobolev forms over the unstable manifolds. 
By a result of Laudenbach \cite[Appendix, Proposition 6]{Bismut:Extension}, $\Int^*$ is a cochain map. Let \begin{equation} \pi^*: \ker(\partial^*_{MS}) \to \ker(\partial^*_{MS})/\text{clos}(\im(\partial^{*-1}_{MS})) \eqqcolon H_{(2)}^*(\widetilde{M},\nabla_{\widetilde{g}'}\widetilde{f},\widetilde{E},\widetilde{h}) \end{equation} be the projection of the kernel of the $L^2$-Morse-Smale boundary operator onto the corresponding $L^2$-Morse-Smale homology. By a theorem of Dodziuk \cite{Dodziuk}, extended by Schick \cite{Schick:Bounded2} to manifolds with boundary and by Shubin \cite{Shubin:coeff} to non-unitary bundles, the map \begin{equation}\label{MSINTG} \Theta^*: \mathcal \ker(\Delta_*) \to H_{(2)}^*(\widetilde{M},\nabla_{\widetilde{g}'}\widetilde{f},\widetilde{E},\widetilde{h}), \end{equation} defined as the restriction of $\pi^* \circ \Int^*$ onto the closed subspace $\ker(\Delta_*) \subseteq \mathcal W_{l-*}^{*}(\widetilde{M},\widetilde{E},\widetilde{g},\widetilde{h})$ of $L^2$-harmonic forms is an isomorphism of finitely generated Hilbert $\vnN(\Gamma)$-modules. In particular, \begin{equation} {\mathfrak b}_k^{(2)}(M,E) = {\mathbf b}_k^{(2)}(M,E) \;\; 0 \leq k \leq n,\end{equation} i.e.\ the combinatorial and analytical $L^2$-Betti numbers of the pair $(M,E)$ agree. From now on, since c-$L^2$-acyclicity is equivalent to a-$L^2$-acyclicity, we simply say that the pair $(M,E)$ is {\itshape $L^2$-acyclic} whenever either of the two equivalent conditions hold. 
The isomorphism $\Theta^*$ now also allows us to define the {\bfseries metric $L^2$-torsion} $T^{Met}_{(2)}(\mathcal D) \in \reals_{\geq 0}$ of the system $\mathcal D = (E \downarrow M,g,h,\nabla_{g'}f)$ as 
\begin{equation} \log T^{Met}_{(2)}(\mathcal D) \coloneqq \sum_{k=0}^\infty (-1)^{k} \log\vndet(\Theta^k) = \frac{1}{2} \sum_{k=0}^\infty (-1)^k \log \vndet((\Theta^k)^*\Theta^k). \end{equation}
Assuming that $E \downarrow M$ is of a-determinant class, we define the {\bfseries Ray-Singer $L^2$ Torsion} $ T^{RS}_{(2)}(\mathcal D) \in \reals_{\geq 0}$ as 
\begin{equation}\label{RSL2TOR} \log T^{RS}_{(2)}(\mathcal D) \coloneqq \log \left( \frac{T^{An}_{(2)}(E \downarrow M,g,h)}{T^{Met}_{(2)}(\mathcal D)}\right), \end{equation} 
Of course, if $\ker(\Delta_*) = \{0\}$, i.e.\ if $(M,E)$ is $L^2$-acyclic, then $T^{Met}_{(2)}(\mathcal D) = 1$, so that $T^{RS}_{(2)}(\mathcal D)  = T^{An}_{(2)}(E \downarrow M,g,h)$. If $(M,E)$ is both of combinatorial and of analytical determinant class, the {\bfseries relative $L^2$-torsion} $\mathcal R(\mathcal D) \in \reals$ of the corresponding Morse-Smale system $\mathcal D = (E \downarrow M,g,h,\nabla_{g'}f)$ can be defined as
\begin{equation} \mathcal R(\mathcal D) \coloneqq \log \left( \frac{T^{RS}_{(2)}(\mathcal D)}{T^{MS}_{(2)}(E \downarrow M,h,\nabla_{g'}f)} \right). \end{equation} 
We will show in Theorem \ref{ANCOMBCH} that the condition $E \downarrow M$ being of a-determinant class is equivalent to $E \downarrow M$ being of c-determinant class. Therefore, we are justified to say that $E \downarrow M$ is {\itshape of determinant class} whenever either determinant class condition (and therefore both) is satisfied.
\begin{remark} It should be mentioned that the relative torsion $\mathcal R(\mathcal D) \in \reals$ can be defined even if the corresponding bundle $E \downarrow M$ is not of determinant class. In that case, the individual terms $T^{RS}_{(2)}(E \downarrow M,g,h,\nabla_{g'}f)$ and $T^{MS}_{(2)}(E \downarrow M,g,h,\nabla_{g'}f)$ are not real numbers, but non-vanishing vectors in the same orientation class of a particular $1$-dimensional real vector space. Therefore, their quotient yields a {\itshape positive} real number, which is why $\mathcal R(\mathcal D)$, the logarithm of the quotient as above, is still well-defined. It can be shown that the main Theorem \ref{MAINTHEOREM} still holds in this case. We refer to \cite{Friedlander:Rel},  \cite{Zhang:CM} and \cite{Braverman:Det} for a detailed study of $L^2$-torsion without the determinant class conditions. \end{remark}
\section{Statement of the main results}
Using the terminology introduced in the previous section, we are going to formulate the main results, Theorem \ref{MAINTHEOREM} and \ref{MAINTHEOREM2}. \\
First, however, we also need to establish the notion of a {\itshape local quantity}:
Given two systems $\mathcal D_i =  (E_i \downarrow M_i, g_i,h_i, X_i)$, an isometry $\phi: (M_1,g_1) \to (M_2,g_2)$ between the underlying Riemannian manifolds that satisfies $\phi^*X_2 = X_1$ and extends to a flat bundle isometry $\Phi: (E_1,h_1) \to (E_2,h_2)$ is called an {\itshape isomorphism} between the systems.
\begin{dfn}[Local Quantity] An assignment of a form $\alpha = \alpha(\mathcal D) \in Y$, where either $Y = \Omega^n(M,\mathcal O_M)$, or $Y = \Omega^{n-1}(\partial M,\mathcal O_{\partial M})$ for any system $\mathcal D = (E \downarrow M, g,h, X)$ is called a {\bfseries local quantity} of $\mathcal D$ if it satisfies the following
compatibility conditions: \begin{enumerate}
\item For any open subset $U \subseteq M$, it holds that $\alpha(\mathcal D|_{U}) = \alpha(\mathcal D)|_{U}$. 
\item If $\phi: M_1 \to M_2$ is an isomorphism between two systems $\mathcal D_i = (E_i \downarrow M_i,g_i,h_i,X_i)$ (for $i=1,2$), then 
$\phi^* \alpha(\mathcal D_2) = \alpha(\mathcal D_1)$. 
\end{enumerate}
Here, as everywhere else, $\mathcal O_M \downarrow M$ is the (real) {\itshape orientation line bundle} over $M$. Elements of $\Omega^n(M,\mathcal O_{M})$ are called {\itshape densities}. 
\end{dfn}
For any system $\mathcal D =( E \downarrow M, g,h,\nabla_{g'}f)$ with $(f,g')$ a Morse-Smale pair, we will now construct a local quantity of the derived system  $\mathcal D = (E|_{M \setminus \Cr(f)} \downarrow M \setminus \Cr(f),g,h, \nabla_{g'} f)$ that constitutes an integral part in the analysis of the anomaly between $L^2$-Ray Singer and Morse-Smale torsion. \\
First off, as carefully explained and constructed by Bismut and Zhang in \cite[Section 3]{Bismut:Extension}, the Levi-Civita connection of the Riemannian metric $g$ gives rise to the {\bfseries Mathai-Quillen Current} \begin{equation} \Psi(M,g) \in \Omega^{n-1}(TM \setminus M, \mathcal O_{TM}). \end{equation} 
Here, we have identified $M \subseteq TM$ with its zero section inside $TM$. 
The second local quantity of relevance is is the $1$-form $\theta(h) \in \Omega^1(M)$, which measures the local change of the volume form induced the metric $h$ along $M$ and can be constructed as follows: Let $\nabla$ be the flat connection on $E \downarrow M$ and let $\overline{E^*} \downarrow M$ be the flat bundle over $M$ {\itshape conjugate dual} to $E \downarrow M$. The induced endomorphism bundle $\End(E,\overline{E^*}) \downarrow M$ carries a flat connection $\nabla^*$ naturally induced by $\nabla$. For the metric $h$, we now observe that $h \in \Gamma(M,\End(E,\overline{E^*}))$, which allows us to define the $1$-form \begin{equation}\label{VOLCH} \theta(h) \coloneqq \tr(h^{-1} \nabla^* h) \in \Omega^1(M). \end{equation} \begin{dfn}\label{unitunim} A metric $h$ on a flat bundle $E \downarrow M$ is called {\bfseries unitary} (or parallel) if $\nabla^*h \equiv 0$. $h$ is called {\bfseries unimodular} if $\theta(h) \equiv 0$. \end{dfn} The canonical metric associated to a flat unitary bundle $E \downarrow M$, i.e.\ every bundle coming from a unitary representation $\rho \colon \Gamma \to O(V)$), is unitary. Unitary metrics are obviously unimodular -- the converse need not hold. Every unimodular bundle $E \downarrow M$, i.e.\ every flat bundle corresponding to a finite-dimensional unimodular representation $\rho \colon \Gamma \to SL(V)$ admits a unimodular metric $h$. Although there is in general no canonical choice of a unimodular metric, such metrics can always be chosen with a lot of flexibility, as the next lemma shows: \begin{lemma}\cite[Corollary 5.4.18]{Ich}\label{UNIMODEX} Let $E \downarrow M$ be a flat, unimodular bundle over a connected manifold $M$ and $U = \bigsqcup_{i \in I} U_i \subseteq M$ a subset with each $U_i$ open and connected. Let $x_0 \in \Int(M \setminus U)$ and $x_i \in U_i$ for each $i \in I$ be chosen basepoints with curves $c_i \subseteq M$ connecting $x_0$ to $x_i$. 
Further, let $\widetilde{h}_0$ be a Hermitian metric on $E_{x_0}$ and $\widetilde{h_i}$ a Hermitian metric on $E_{x_i}$ satisfying \begin{equation} \det(\widetilde{h_i} \cdot P_{c_i}^*(\widetilde{h_0})^{-1}) = 1, \end{equation} where $P_{c_i}: \GL(E_{x_0},\overline{E_{x_0}^*}) \to \GL(E_{x_i},\overline{E_{x_i}^*})$ denotes the parallel transport along the curve $c_i$. Then, for any unimodular metric $\bigsqcup h_i$ on $E|_{U}$ extending $\bigsqcup \widetilde{h_i}$, there exists a global unimodular metric $h$ on $E$ further extending $\bigsqcup h_i \sqcup \widetilde{h}_0$. \end{lemma}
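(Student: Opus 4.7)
The plan is to reduce unimodularity of a Hermitian metric $h$ on $E$ to a parallelism condition on the induced Hermitian metric $\det h$ on the determinant line bundle $\det E$. Since $E$ is a unimodular bundle, the representation $\rho$ factors through $\mathrm{SL}(V)$, so $\det E \downarrow M$ is trivial as a flat complex line bundle; consequently the real flat line bundle $\mathcal L$ of Hermitian forms on $\det E$ is trivial and admits a global nowhere-vanishing parallel positive section $\sigma$, unique up to a positive scalar. A direct computation — using that the induced connection on $\det E$ is the trace of the connection on $E$ — yields the identity $\theta(h) = d\log(\det h / \sigma)$, where $\det h / \sigma$ denotes the smooth positive function obtained by expressing $\det h$ in the trivialization by $\sigma$. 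On the connected manifold $M$, this means that $h$ is unimodular if and only if $\det h = c \cdot \sigma$ for a single positive constant $c$.

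The next step is to unpack the compatibility hypothesis. Writing $\det \widetilde h_0 = c_0 \cdot \sigma(x_0)$, the condition $\det(\widetilde h_i \cdot P_{c_i}^*(\widetilde h_0)^{-1}) = 1$ says exactly that $\det \widetilde h_i$ equals the parallel transport of $\det \widetilde h_0$ along $c_i$; since $\sigma$ is parallel this transport is $c_0 \cdot \sigma(x_i)$. On the other hand, $h_i$ being unimodular on the connected set $U_i$ forces $\det h_i = c_i \sigma$ throughout $U_i$ for some $c_i > 0$, and evaluation at $x_i$ yields $c_i = c_0$. Hence $\det h_i = c_0 \sigma$ holds on every $U_i$ with the single constant $c_0$.

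To build the global metric, I would first extend $\widetilde h_0$ to a unimodular metric $h_0$ on a small open ball $B$ around $x_0$ disjoint from $U$: choose a flat trivialization of $E$ over $B$ and take the constant Hermitian metric on $V$ corresponding to $\widetilde h_0$; being constant in a flat frame, it is parallel and hence unimodular. A standard partition-of-unity argument then produces a smooth global Hermitian metric $h'$ on $E$ agreeing with $h_0$ on $B$ and with $h_i$ on each $U_i$. Writing $\det h' = \phi \cdot \sigma$ for a smooth function $\phi \colon M \to \reals_{>0}$, the previous step gives $\phi \equiv c_0$ on $B \cup U$.

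Finally, let $n$ be the complex rank of $E$, set $\psi \coloneqq (c_0/\phi)^{1/n}$ and define $h \coloneqq \psi \cdot h'$. Then $\det h = \psi^n \det h' = c_0 \cdot \sigma$, so $h$ is unimodular by the first paragraph, while $\psi \equiv 1$ on $B \cup U$ guarantees that $h$ restricts to $\widetilde h_0$ at $x_0$ and to $h_i$ on each $U_i$. The only delicate input is the identity $\theta(h) = d\log(\det h / \sigma)$ and the resulting characterization of unimodularity as a single global constant condition on $\det h$; once that is in place, the construction is a transparent mix of linear algebra, a local parallel extension near $x_0$, and a standard partition-of-unity extension.
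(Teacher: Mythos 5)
Your proof is correct, and the strategy is the natural one. The paper cites this lemma from the author's dissertation without reproducing the argument, so a direct comparison is not possible, but the structure you propose is clean and complete: reduce unimodularity to the statement that the induced Hermitian metric on the determinant line $\det E$ equals a fixed positive multiple of the canonical parallel metric $\sigma$ (which exists precisely because $\rho$ factors through $\mathrm{SL}(V)$, so $\det E$ is trivial as a flat bundle); check via the compatibility hypothesis and connectedness of each $U_i$ that the same constant $c_0$ works on all of $B \cup U$; glue an arbitrary smooth Hermitian metric $h'$; and then apply the scalar correction $h = (c_0/\phi)^{1/n} h'$. The rescaling is the real engine here, since a partition of unity does not preserve the closed condition $\theta(h)=0$, and it is exactly the multiplicative behaviour of $\det$ under scaling that lets you fix the determinant globally without disturbing $h'$ on $B \cup U$, where $\psi \equiv 1$. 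The key identity $\theta(h) = d\log(\det h/\sigma)$ checks out: in a local flat frame it is Jacobi's formula $\tr(H^{-1}dH)=d\log\det H$ combined with $\sigma$ being locally constant.

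One minor caveat, which is a feature of the lemma as stated rather than a defect of your argument: in order for any smooth global $h$ to restrict to $h_i$ on all of $U_i$, the data $h_i$ must already extend smoothly across $\overline{U_i}$; your partition-of-unity step uses this tacitly, as it must. In the paper's applications the sets $U_i$ are collars or coordinate balls with metrics given in a neighbourhood of their closures, so this is harmless.
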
  
Now notice that $\nabla_{g'} f$ determines a smooth {\itshape embedding} $\nabla_{g'}f\colon M \setminus \Cr(f) \to TM \setminus M$. Wedging the corresponding pullback $\nabla_{g'} f^* \Psi(M,g) \in \Omega^{n-1}(M \setminus \Cr(f),\mathcal O_M)$ with $\theta(h) \in \Omega^1(M)$, we obtain a density over $M \setminus \Cr(f)$ and local quantity of $\mathcal D$: \begin{equation} \theta(h) \wedge \nabla_{g'} f^* \Psi(M,g) \in \Omega^{n}(M \setminus \Cr(f), \mathcal O_M). \end{equation} 
This allows us to, at least formally, define the integral \begin{equation} \int_{M} \theta(h) \wedge \nabla_{g'} f^* \Psi(M,g) \coloneqq \int_{M \setminus \Cr(f)}  \theta(h) \wedge \nabla_{g'} f^* \Psi(M,g). \end{equation} 
Note that since $M \setminus \Cr(f)$ is not compact (unless $\Cr(f) = \emptyset$), the integral need {\itshape a priori} not converge. That this indeed always case has been shown in \cite{Bismut:Extension}, as an immediate consequence of their main result. Moreover, one can verify either from its explicit construction as done in \cite[Chapter \rom{3}]{Bismut:Extension} or immediately from \cite[Section 4]{Friedlander:Rel}, that $\theta(h) \wedge \nabla_{g'} f^* \Psi(M,g) $ is a local quantity of the system $\mathcal D = (E|_{M \setminus \Cr(f)} \downarrow M \setminus \Cr(f),g,h, \nabla_{g'} f)$, as claimed. The theorem that we wish to generalize is the following result by Zhang:

\begin{theorem}\cite[Theorem 4.2]{Zhang:CM}\label{CLOSINV2} Let $\mathcal D = (E \downarrow M,g,h,\nabla_{g'}f)$ be a system with $(f,g')$ a Morse-Smale pair and $M$ {\bfseries closed}. Then
\begin{equation}  \mathcal R(\mathcal D) = - \frac{1}{2} \int_{M} \theta(h) \wedge \nabla_{g'} f^* \Psi(M,g). \end{equation}
\end{theorem}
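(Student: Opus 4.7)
The plan is to reduce the identity to a variation formula by deforming the Hermitian metric $h$ through a smooth one-parameter family $h_u$ and computing the derivative of both sides in $u$. Concretely, I would fix a self-adjoint $\omega = \omega^* \in \Gamma(\End(E,h))$ and consider $h_u = h\cdot e^{-2u\omega}$, so that only $\frac{d}{du}\mathcal R(\mathcal D_u)$ needs to be tracked against the derivative of the right-hand side in $u$. For a base point, one takes a parallel $h_0$ wherever this is globally possible (the classical BFKM-Shubin $L^2$-Cheeger-M\"uller theorem for closed unitary bundles then gives $\mathcal R(\mathcal D_0) = 0$, while $\theta(h_0) = 0$ makes the right-hand side vanish); in the general case, one reduces to this via a partition of unity using that $\theta(h)$ is first-order in $h$-perturbations so the identity patches additively.

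The key analytic input is the $L^2$-anomaly formula for $T^{An}_{(2)}$ under changes of $h$, which is the von Neumann extension (adapting Bismut-Zhang) of the classical variation formula. It rewrites $\frac{d}{du}\log T^{An}_{(2)}(g,h_u)$ as a local integral over $M$ involving $\tr(\dot h_u h_u^{-1})$ and the Euler form $e(TM,g)$, together with a correction coming from the harmonic projection. Using the Mathai-Quillen transgression $e(TM,g) = d\Psi(M,g)$ on $TM \setminus M$ proved in \cite[Chapter III]{Bismut:Extension}, one pulls back along $\nabla_{g'}f \colon M\setminus \Cr(f) \to TM \setminus M$ and applies Stokes' theorem around the critical points. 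The interior integral becomes
\[ \int_M \tr(\dot h_u h_u^{-1}) \wedge \nabla_{g'}f^*\Psi(M,g) = \int_M \theta(\dot h_u)\wedge \nabla_{g'}f^* \Psi(M,g), \]
and the boundary contributions from small spheres around each $p\in \Cr(f)$ isolate a local residue at $p$.

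In parallel, the combinatorial side $\log T^{MS}_{(2)}(E\downarrow M,h_u,\nabla_{g'}f) - \log T^{Met}_{(2)}(\mathcal D_u)$ is a finite-type Fuglede-Kadison determinant computation on the Hilbert $\vnN(\Gamma)$-complex $C^*_{(2)}(\widetilde M,\nabla_{\widetilde{g}'}\widetilde f,\widetilde E,\widetilde h_u)$, whose differentials are fixed and only whose fibre inner products vary. By Proposition \ref{PROP2}, the $u$-derivative of this quantity localises on the fibres over $\Cr(\widetilde f)$ and yields precisely the residues produced by the analytic side. The residues therefore cancel in the difference $\frac{d}{du}\mathcal R(\mathcal D_u)$, leaving the bulk term
\[ \frac{d}{du}\mathcal R(\mathcal D_u) = -\frac{1}{2}\int_M \theta(\dot h_u)\wedge \nabla_{g'}f^*\Psi(M,g). \]
Integration over $u\in [0,1]$, together with $\int_0^1 \theta(\dot h_u)\, du = \theta(h) - \theta(h_0) = \theta(h)$, gives the claim.

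The principal obstacle is the rigorous transgression step on the infinite cover $\widetilde M$: one must control the heat-kernel asymptotics of $\Delta_k$ uniformly in the deformation parameter $u$ to interchange $\frac{d}{du}$ with the zeta-regularised integral defining $T^{An}_{(2)}$, and one must simultaneously handle the singularity of $\Psi(M,g)$ along the zero section $M \subseteq TM$. The determinant class hypothesis on $E \downarrow M$ is essential here: it gives the spectral-density bound $\int_{0^+}^1 \log(\lambda)\, dF(\lambda) > -\infty$ needed to justify termwise differentiability of $\log \vndet$, which in the finite-cover setting is automatic but in the von Neumann setting requires real work and is the point where the full machinery of Section \ref{Hilsec} is brought to bear.
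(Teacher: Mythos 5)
The statement you are asked to prove is quoted in the paper as an external result, namely Zhang's \cite[Theorem 4.2]{Zhang:CM}; the paper gives no proof of it, so there is no in-paper argument to compare against. Nevertheless, your proposal can be assessed against Zhang's actual strategy, which is the one this paper assumes throughout, and there is a genuine gap.

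Your variational set-up -- deforming $h_u = h e^{-2u\omega}$, invoking the $L^2$ anomaly formula for $T^{An}_{(2)}$, transgressing $e(TM,g) = d\Psi(M,g)$ and applying Stokes around $\Cr(f)$, matching residues against the Fuglede-Kadison determinant variation on the combinatorial side via Proposition~\ref{PROP2} -- is entirely in the Bismut--Zhang spirit, and this is genuinely part of what Zhang's proof does. The problem is the base case. You propose to anchor the integration at a parallel $h_0$ and invoke the unitary BFKM--Shubin theorem; but a parallel (equivalently, unitary) Hermitian metric exists on $E$ only when $\rho$ is conjugate into the unitary group, which is false for a general flat bundle. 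Your fallback, a partition-of-unity reduction ``because $\theta(h)$ is first order in $h$-perturbations,'' cannot work: $\mathcal R(\mathcal D)$ is a global Fuglede--Kadison-determinant invariant of the full Hilbert $\mathcal N(\Gamma)$-complex, not a density that localises and patches, and $\theta(h) = \tr(h^{-1}\nabla^* h)$ is not even linear in $h$. Even granting the anomaly formula, the variational argument by itself only proves that the defect $\mathcal R(\mathcal D) + \tfrac12 \int_M \theta(h)\wedge \nabla_{g'}f^*\Psi(M,g)$ is a constant independent of $(g,h,\nabla_{g'}f)$; it does not show the constant is zero.

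The missing ingredient is the Witten deformation. Zhang, following Bismut--Zhang \cite{Bismut:Extension}, normalises $(g,h,f)$ so that $g = g'$ near $\Cr(f)$ and $h$ is parallel \emph{only near $\Cr(f)$} -- a local, not global, normalisation that always exists -- and then evaluates the constant directly via the large-$t$ asymptotics of the Witten Laplacian $\Delta_{*,t}$: the small subcomplex converges to the combinatorial one (cf.\ Theorem~\ref{MSSMEQ} and Theorem~\ref{ANCOMBCH} in the boundary setting of this paper), while the free term of the large-complex torsion produces exactly $-\tfrac12\int \theta(h)\wedge \nabla_{g'}f^*\Psi(M,g)$. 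Without this computational anchor your argument does not close. A second, lesser issue is the claimed endpoint identity $\int_0^1 \theta(\dot h_u)\,du = \theta(h)$: since $\theta$ is not linear in $h$ this is not a tautology, and the notation $\theta(\dot h_u)$ is not well-defined; one should instead write $\int_0^1 \tfrac{d}{du}\theta(h_u)\,du = \theta(h_1) - \theta(h_0)$ and observe it still requires a base metric $h_0$ with $\theta(h_0) = 0$, which brings back the same obstruction.
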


With aid of the above theorem, we will derive a similar result in case that $M$ is odd-dimensional with non-empty boundary: 

\begin{theorem}\label{MAINTHEOREM} Let $\mathcal D = (E \downarrow M,g,h,\nabla_{g'}f)$ be a type \rom{2} Morse-Smale system of product form, where $M$ is an odd-dimensional manifold and $h|_{\partial M}$ is unimodular. Further, assume that both $E \downarrow M$ and $E|_{\partial M} \downarrow \partial M$ are of determinant class. Then 
\begin{align}& \mathcal R(\mathcal D) = - \frac{\log 2}{4} \chi(\partial M)  \dim(E)   -  \frac{1}{2}\int_{M} \theta(h) \wedge \nabla_{g'}f^* \Psi(TM,g). \end{align}
\end{theorem}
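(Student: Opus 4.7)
The plan is to combine Zhang's closed-manifold result (Theorem \ref{CLOSINV2}) with a reduction to a reference system whose Hermitian form is \emph{globally} unimodular, and then to recover the general case through the anomaly formulas for $T^{An}_{(2)}$, $T^{MS}_{(2)}$ and $T^{Met}_{(2)}$ that are to be assembled in Section 5. The boundary correction $-\tfrac{\log 2}{4}\chi(\partial M)\dim(E)$ should come out of the unimodular reference situation, while the bulk integral $-\tfrac{1}{2}\int_M\theta(h)\wedge\nabla_{g'}f^*\Psi(TM,g)$ should emerge as the anomaly difference between the two systems.

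First I would use Lemma \ref{UNIMODEX}, together with the hypothesis that $h|_{\partial M}$ is unimodular, to choose an auxiliary Hermitian form $h_0$ on $E\downarrow M$ which is globally unimodular, coincides with $h$ on a collar neighbourhood of $\partial M$ (so that the product form condition is preserved), and is parallel near $\Cr(f)$. The associated system $\mathcal D_0 = (E\downarrow M, g, h_0, \nabla_{g'}f)$ is then a type \rom{2} Morse-Smale system of product form with $\theta(h_0)\equiv 0$, and $(M,E)$ together with $(\partial M, E|_{\partial M})$ remain of determinant class since this condition depends only on the underlying flat bundle by Proposition \ref{PROP1}.

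Second, I would establish the formula for $\mathcal D_0$ directly by adapting the Burghelea-Friedländer-Kappeler technique reviewed in Section 6. Concretely: introduce the Witten-deformed differential $d_t = e^{-tf}d\,e^{tf}$ on the $L^2$-de Rham complex under absolute boundary conditions, split each $\Omega^k_{(2)}(\widetilde M,\widetilde E)$ into a small subcomplex generated by eigenforms of $\Delta_{k,t}$ below a fixed threshold and a large subcomplex above it, and analyze both pieces as $t\to\infty$. The small subcomplex, after the standard rescaling concentrated at $\Cr(\widetilde f)$, converges to the Morse-Smale complex $C^*_{(2)}(\widetilde M,\nabla_{\widetilde g'}\widetilde f,\widetilde E,\widetilde h_0)$ and supplies the factor $T^{MS}_{(2)}(E\downarrow M,h_0,\nabla_{g'}f)$; the large subcomplex gives $T^{An}_{(2)}(E\downarrow M,g,h_0)$ up to an explicit $t\to\infty$ remainder whose finite part is computed on the collar half-line model. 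Since $h_0$ is unimodular and $(g,h_0)$ is of product form, no interior anomaly survives and the collar computation, exactly as in \cite{Friedlander:Bd} but carried out for unimodular rather than flat $h$, yields
\begin{equation*}
\mathcal R(\mathcal D_0) \;=\; -\frac{\log 2}{4}\,\chi(\partial M)\,\dim(E).
\end{equation*}

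Finally I would compute $\mathcal R(\mathcal D)-\mathcal R(\mathcal D_0)$ using the $L^2$-analytic anomaly formula of Ma-Zhang \cite{Zhang:AN} together with the Morse-Smale and metric anomaly formulas to be obtained in Section 5. Because $h$ and $h_0$ agree on a collar of $\partial M$, the Ma-Zhang boundary anomaly for $T^{An}_{(2)}$ vanishes; because both are parallel near $\Cr(f)$, the Morse-Smale inner products are unchanged and the $T^{MS}_{(2)}$ anomaly vanishes. What remains is the interior anomaly $-\tfrac{1}{2}\int_M(\theta(h)-\theta(h_0))\wedge\nabla_{g'}f^*\Psi(TM,g) = -\tfrac{1}{2}\int_M\theta(h)\wedge\nabla_{g'}f^*\Psi(TM,g)$. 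Adding this to the unimodular formula gives the claimed identity. The main obstacle will be the second step: carrying out the small/large splitting and the $t\to\infty$ asymptotics uniformly in the $\vnN(\Gamma)$-trace under absolute boundary conditions, and extracting exactly the factor $\tfrac{\log 2}{4}\chi(\partial M)\dim(E)$ from the collar $\zeta$-function calculation. The odd-dimensionality of $M$ enters precisely here, through the cancellation of purely interior Bismut-Zhang/Brüning-Ma type terms after integration so that only the $\chi(\partial M)$-proportional boundary contribution survives.
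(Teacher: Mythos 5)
Your proposal diverges from the paper's proof and contains two substantial gaps.

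The first gap is in Step 1: Theorem~\ref{MAINTHEOREM} does \emph{not} assume that the bundle $E\downarrow M$ is unimodular, only that its restriction $h|_{\partial M}$ to the boundary is a unimodular metric. A globally unimodular metric $h_0$ on $E\downarrow M$ exists if and only if the de~Rham class $[\theta(h)]\in H^1(M;\reals)$ vanishes, which is equivalent to $E$ itself being unimodular. Lemma~\ref{UNIMODEX} explicitly requires $E$ to be a unimodular bundle, so you cannot invoke it here: the hypothesis only controls the restriction of $[\theta(h)]$ to $\partial M$, not the class on all of~$M$. For a general $E\downarrow M$ satisfying the assumptions of the theorem, your reference system $\mathcal D_0$ simply need not exist.

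The second gap is in Step 2, which is really the heart of the matter. Even if a globally unimodular $h_0$ existed, asserting that the Burghelea--Friedlander--Kappeler collar computation goes through ``exactly as in \cite{Friedlander:Bd} but carried out for unimodular rather than flat $h$'' is not a proof: the original BFK argument uses the flatness of $h$ in an essential way in the analysis of the heat kernel on the collar, and adapting that argument to a metric that is merely unimodular is precisely the kind of nontrivial extension the paper is meant to supply. The paper does not attempt a direct re-derivation of this asymptotic. Instead it sidesteps the hard collar analysis by a doubling-and-gluing strategy: Proposition~\ref{gluecomp} reduces the difference of relative torsions for two boundary-compatible systems to Zhang's closed-manifold Theorem~\ref{CLOSINV2}, Theorem~\ref{strongthm} then removes the compatibility restriction via products with $S^2$ and subdivisions, and finally the proof of Theorem~\ref{MAINTHEOREM} compares $\mathcal D\sqcup\mathcal D$ with the explicit cylinder system $\underline{\mathcal D'\times\mathcal D_0}$ over $\partial M\times I$, whose relative torsion is computed by hand in Example~\ref{TRIVEX} and the product formula, giving $-\frac{\log 2}{2}\chi(\partial M)\dim(E)$. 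Dividing by $2$ yields the boundary constant. Your Step 3 also needs care: the Section~5 anomaly formula (Proposition~\ref{METANOREAL}) for changing Hermitian metrics gives a \emph{discrete} sum over critical points, not an integral of $\theta(h)-\theta(h_0)$ against the Mathai--Quillen current, so the claimed reduction $\mathcal R(\mathcal D)-\mathcal R(\mathcal D_0)=-\frac{1}{2}\int_M\theta(h)\wedge\cdots$ is essentially equivalent to the theorem you are trying to prove rather than a consequence of results already established.
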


\begin{remark} Similarly as in the unitary case (cf.\ \cite[Theorem 4.1]{Friedlander:Bd}), there is also a version of Theorem \ref{MAINTHEOREM} for relative/mixed, instead of absolute boundary conditions as we assume here throughout. The proof presented here carries over to this case with only minor modifications. Although not relevant for this paper, this generalization will prove to be useful when one wants to extend the gluing formula \cite[Theorem 4.3]{Friedlander:Bd} to non-unitary bundles, which could in turn be used for future computational purposes. \end{remark}

\begin{example}\label{TRIVEX}
Set $I = [a,b]$, and let $E_{\ceals} \coloneqq \ceals \times I$ be the trivial $1$-dimensional complex vector bundle over $I$. As metrics, we choose $g_0$ to be the standard Euclidean metric and $h_0$ the canonical constant Hermitian form, i.e $\langle z,z' \rangle_{h_0(x)} \coloneqq z \overline{z'}$ for any $x \in I$ and any pair $z,z' \in \ceals$. Further, we choose as  Morse-function a smooth map $f_0: [a,b] \to \reals$ satisfying \begin{itemize} \item $f_0(x) \coloneqq \frac{1}{2}(x - (b+a)/2)^2$ away from a neighborhood of $\{a,b\}$, \item $f_0(a + t\epsilon) = f_0(b - t \epsilon) = b - t\epsilon$ for all $t \in [0,1]$ and some small $\epsilon > 0$, and so that \item $(b+a)/2$ is the only critical point of $f_0$. \end{itemize} One now easily verifies that $\mathcal D_I \coloneqq (E_{\ceals} \downarrow I,g_0,h_0,\nabla_{g_0'}f_0)$ is an admissible system and that $E_{\ceals} \downarrow I$ is of determinant class. In fact, we can directly compute the corresponding analytic and combinatorial torsion elements. This computation will also be essential for the proof of Theorem \ref{MAINTHEOREM}.
Firstly, since $f_0$ has by construction only one critical point, the corresponding Morse-Smale complex has only one non-trivial chain module, immediately implying that
\begin{align}\label{Itop} \log T^{MS}_{(2)}(I,g_0,h_0,\mathcal{F},f_0) = 0. \end{align} 
Similarly, it follows that the de Rham integration map \begin{equation*} \Int^*: \Omega^*(I,\mathcal{F}) \to C_{MS}^*(I,g_0,h_0,\mathcal{F},f_0) = \ceals \otimes \left[\frac{b+a}{2} \right] \end{equation*} is only non-trivial on $\Omega^0(I,\mathcal{F}) \cong C^\infty(I,\ceals)$, on which it is defined by 
\begin{equation*} \Int^0(f) =  f \left(\frac{b+a}{2}\right) \otimes  \left[\frac{b+a}{2}\right]. \end{equation*} Therefore, the isomorphism \begin{equation*} \Theta^0: \mathcal H^0(I,\mathcal{F}) \to \ceals \otimes \left[\frac{b+a}{2}\right], \end{equation*} obtained by simply restricting $\Int^0$ to the space of harmonic, i.e.\ {\itshape constant}, functions, maps the function $f \equiv c$  to $c \otimes  [\frac{b+a}{2}]$. Since the inner product on $\ceals \otimes [\frac{b+a}{2}]$ in the canonical one determined by $h_0$ and the inner product $\mathcal H^0(I,\mathcal{F})$ is induced by integration over the interval $I = [a,b]$, it follows that the adjoint \begin{equation*} (\Theta^0)^*:\ceals \otimes \left[\frac{b+a}{2}\right]  \to \mathcal H^0(I,\mathcal{F}) \end{equation*} sends $c \cdot [\frac{b+a}{2}]$ to the constant function $f \equiv c(b-a)^{-1}$. Therefore, the composition $(\Theta^0)^*\Theta^0$ is simply scalar multiplication by $(b-a)^{-1}$, from which we deduce that \begin{align} \log T^{Met}_{(2)}(\mathcal D_I) = \frac{1}{2} \log\left(\det((\Theta^0)^*\Theta^0)\right) =  - \frac{1}{2} \log(b-a). \end{align}  In order to compute the analytic torsion, observe first that, 
under the isometric identification $\Omega^1(I,\mathcal F) \cong C^\infty(I,\ceals)$ with $f(x) dx \mapsto f(x)$, the Laplacian $\Delta_1$ defined over $\Omega^1(I,\mathcal F)$ corresponds to the closure of the elliptic operator $- \frac{\partial^2}{\partial x^2}$ with initial domain $\{ g \in C^\infty: g \equiv 0 \; \text{on} \; \{a,b\} \}$. 
It is well-known, see for example \cite[Section 4.2]{Strauss:PDG} for each $n \in \mathbb N_0$ that \begin{equation*} \spec(\Delta_1) = \spec(- \frac{\partial^2}{\partial x^2}) = \{ \frac{n^2\pi^2}{l^2} : n \in \mathbb N_0 \}, \end{equation*} with $l \coloneqq b - a$ (and eigenspace of $n^2\pi^2/l^2$ the $\mathbb C$-span of $\sin(n\pi/l (x-a))$. Therefore, the Zeta function $\zeta_{\Delta_1}(s)$ of $\Delta_1$ satisfies 
\begin{equation*} \zeta_{\Delta_1}(s) = \sum_{n=1}^\infty \left(\frac{l}{n\pi}\right)^{2s}  = \left(\frac{l}{\pi}\right)^{2s}  \sum_{n=1}^\infty \left(\frac{1}{n}\right)^{2s} = \left(\frac{l}{\pi}\right)^{2s} \cdot \zeta(2s), \end{equation*} where $\zeta$ denotes the ordinary Riemann Zeta-function. Applying the well-known equalities
$\zeta(0) = - \frac{1}{2}$ and $\zeta'(0) = - \frac{1}{2} \log(2\pi)$, we can thus compute
\begin{equation}\label{Ian} \log T^{An}_{(2)}(\mathcal D_I) = \frac{1}{2} \zeta_{\Delta_1}'(0)  = - \frac{1}{2} \left( \log(2) + \log(b-a) \right). \end{equation}
From \ref{Itop}--\ref{Ian}, we get
\begin{equation} \mathcal R(\mathcal D_I) = - \frac{\log 2}{2} = -\frac{\log{2}}{4}\chi(\{a,b\}) -\frac{1}{2} \int_{a}^b \overbrace{\theta(h_0)}^{=0} \wedge (\nabla_{g_0'} f_0)^* \Psi(TI,g_0). \end{equation}
\end{example}

The main part of this paper is devoted to the proof of \ref{MAINTHEOREM}. We will adapt the techniques and strategy developed by Burghelea, Friedlander and Kappeler in \cite{Friedlander:Bd} to our situation of non-unitary bundles, together with employing several known anomaly results that have been shown since. We remark that Theorem \ref{MAINTHEOREM} has also recently been verified in an (as of now) unpublished paper by Guangxiang Su, employing techniques and methods different from the ones that we are using. 
Theorem \ref{MAINTHEOREM}, together with the main results established by Br\"uning and Ma in \cite{Bruning:Glue}, Zhang and Ma in \cite{Zhang:AN}, and Zhang in \cite{Zhang:CM}, are then used to prove the next key result of this paper: 
\begin{theorem}\label{MAINTHEOREM2} Let $(M,g)$ be a compact, connected, odd-dimensional Riemannian manifold. Then, there exists a density $B(g) \in \Omega^{n-1}(\partial M,\mathcal O_{\partial M})$ with $B(g) \equiv 0$ when $g$ is product-like near $\partial M$, such that the following holds:
\\ Let $E \downarrow M$ be a flat, finite-dimensional complex vector bundle, such that
\begin{enumerate}[label=\emph{$(\alph*)$}] 
\item $E$ is unimodular,
\item the pair $(M,E)$ is $L^2$-acyclic and of determinant class, 
\item the restriction $(\partial M,E|_{\partial M})$ is of determinant class.
\end{enumerate}
Then, for any choice of unimodular metric $h$ on $E$, one has
\begin{equation} \log \left (\frac{T^{An}_{(2)}(E \downarrow M,g,h)}{T^{Top}_{(2)}(M,E)} \right) = \frac{1}{2}\dim_{\ceals}(E) \int_{\partial M} B(g). \end{equation}
In particular, for $i=1,2$ and any two representations $E_i \downarrow M$ satisfying the above assertions, it follows that
\begin{equation} \dim_{\ceals}(E_2)  \log \left( \frac{ T^{An}_{(2)}(E_1 \downarrow M,g,h_1)}{T^{Top}_{(2)}(M,E_1)} \right) = \dim_{\ceals}(E_1)  \log \left( \frac{ T^{An}_{(2)}(E_2 \downarrow M,g,h_2)}{T^{Top}_{(2)}(M,E_2)} \right), \end{equation}
for any choice of unimodular metric $h_i$ on $E_i \downarrow M$. 
\end{theorem}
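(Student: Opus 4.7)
The strategy is to reduce the statement to Theorem \ref{MAINTHEOREM} by specializing to a product-like auxiliary system, and then propagating the identity to a general pair $(g,h)$ via the Ma-Zhang metric anomaly formula for $L^2$-analytic torsion \cite{Zhang:AN}.

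I would begin by extracting two structural consequences of the hypotheses. Since $(M,E)$ is $L^2$-acyclic, the Euler-Poincar\'e identity for $L^2$-Betti numbers gives $\chi(M)\dim_{\ceals}(E)=\sum_k(-1)^k{\mathbf b}^{(2)}_k(M,E)=0$, forcing $\chi(M)=0$; then the doubling identity $\chi(DM)=2\chi(M)-\chi(\partial M)$, combined with $\chi(DM)=0$ (as $DM$ is odd-dimensional and closed), yields $\chi(\partial M)=0$. Next, fix a type II Morse-Smale pair $(f,g')$ on $M$ and an auxiliary Riemannian metric $g_0$ that is product near $\partial M$. By Lemma \ref{UNIMODEX}, construct a global unimodular metric $h_0$ on $E$ that is product near $\partial M$ compatibly with the product structure of $g_0$. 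The resulting system $\mathcal D_0 \coloneqq (E \downarrow M, g_0, h_0, \nabla_{g'}f)$ is a type II Morse-Smale system of product form. Applying Theorem \ref{MAINTHEOREM} to $\mathcal D_0$, both terms on the right-hand side vanish -- the first by $\chi(\partial M)=0$, the second by $\theta(h_0)\equiv 0$ -- giving
\[
T^{RS}_{(2)}(\mathcal D_0) = T^{MS}_{(2)}(E \downarrow M, h_0, \nabla_{g'}f).
\]
By $L^2$-acyclicity, $T^{Met}_{(2)}(\mathcal D_0)=1$, so the left side equals $T^{An}_{(2)}(E \downarrow M, g_0, h_0)$, while by Theorem \ref{TOPTOR} (whose hypotheses hold since $h_0$ is unimodular, $\chi(M)=0$, and $(M,E)$ is $L^2$-acyclic and of determinant class) the right side equals $T^{Top}_{(2)}(M,E)$. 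This establishes the claimed identity for the specific pair $(g_0,h_0)$, consistent with $B(g_0)\equiv 0$.

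To pass to an arbitrary metric $g$ and an arbitrary unimodular $h$, I would invoke the Ma-Zhang anomaly theorem \cite{Zhang:AN}, which asserts that $\log T^{An}_{(2)}(g,h) - \log T^{An}_{(2)}(g_0,h_0)$ coincides with the corresponding anomaly of ordinary Ray-Singer torsion, computed explicitly by Br\"uning-Ma \cite{Bruning:Glue}. In the unimodular setting ($\theta(h)=\theta(h_0)=0$) on an odd-dimensional manifold, the interior transgression contributions and the boundary terms involving $\theta|_{\partial M}$ all vanish, leaving only a purely geometric boundary term of the form $\tfrac{1}{2}\dim_{\ceals}(E)\int_{\partial M} B(g)$, where $B(g) \in \Omega^{n-1}(\partial M, \mathcal O_{\partial M})$ is a universal local density built from $g$ alone that vanishes when $g$ is product-like near $\partial M$. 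Combined with the identity of the previous paragraph, this yields the claimed formula. The ``in particular'' consequence is then immediate: apply the main identity to $E_1$ and $E_2$ and cross-multiply by $\dim_{\ceals}(E_2)$ and $\dim_{\ceals}(E_1)$ respectively to eliminate $\int_{\partial M} B(g)$.

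The principal obstacle is Theorem \ref{MAINTHEOREM} itself, the analytic heart of the paper, whose proof is reserved for Section 7. Granted it, the remaining subtleties are of a bookkeeping nature: one must verify that Lemma \ref{UNIMODEX} indeed supplies a product-compatible unimodular metric $h_0$ under only the bundle-level unimodularity assumption, and confirm that the boundary density extracted from the Br\"uning-Ma/Ma-Zhang formulas in the unimodular odd-dimensional setting depends on $E$ only through $\dim_{\ceals}(E)$, so that $B(g)$ is well-defined as a universal geometric density on $\partial M$ independent both of the bundle and of the particular unimodular $h$.
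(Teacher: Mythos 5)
Your proposal follows essentially the same route as the paper's proof: reduce to a product-form admissible system via Lemma \ref{UNIMODEX} and Theorem \ref{TOPTOR}, use Theorem \ref{MAINTHEOREM} to collapse $\mathcal R$ to zero there, and then transport via the Ma-Zhang $L^2$-anomaly theorem and the Br\"uning-Ma anomaly formula. One detail worth noting: the paper constructs the auxiliary metric $h'$ so that $h'|_{\partial M}\equiv h|_{\partial M}$, which by fiat removes any dependence of the Br\"uning-Ma boundary term on the Hermitian metric and makes $B(g)$ manifestly a purely $g$-dependent density; you instead leave this universality to be verified separately, which is the one item your last paragraph correctly flags as needing checking.
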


\begin{remark} Observe that the statement is vacuous in the case that $M$ possesses no flat bundle $E \downarrow M$ so that $(M,E)$ is $L^2$-acyclic. In particular, this is true whenever $\chi(M) \neq 0$, cf.\ \cite[Theorem 1.35]{Lueck:book}. \end{remark}

\begin{proof} Let $\rho$ be a representation satisfying the assumptions from the theorem. By the previous remark, we must have \begin{equation}\label{zeroeul} 0 = \chi(M) = \frac{1}{2} \chi(\partial M), \end{equation} where the last equality follows since $M$ is odd-dimensional and compact.\\ Choose a Morse function $f$ on $M$ of type \rom{2}, along a Riemannian metric $g'$ on $M$ that is a product near $\partial M$ and so that $(f,g')$ is a Morse-Smale pair. By Lemma \ref{UNIMODEX}, we may also choose a unimodular metric $h'$ with $h'|_{\partial M} \equiv h|_{\partial M}$ and so that $\mathcal D = (E \downarrow M,g',h',f)$ becomes an admissible system (in particular, $h'$ is of product form near $\partial M$). First, since $h'$ is unimodular and $E \downarrow M$ is det-$L^2$-acyclic, we obtain from Theorem \ref{TOPTOR} that \begin{align}\label{MSTOPEQ}  T^{MS}_{(2)}(E \downarrow M,h',\nabla_{g'} f) = T^{Top}_{(2)}(M,E). \end{align}  
Furthermore, we can apply \ref{zeroeul} and Theorem \ref{MAINTHEOREM} to this situation and obtain
\begin{align} \log\left(\frac{T^{An}_{(2)}(E \downarrow M,g',h',f)}{T^{MS}_{(2)}(E \downarrow M,h',\nabla_{g'}f)}\right) = \mathcal R(\mathcal D)= 0.  \end{align} .  \\
Next, choose a type \rom{1} Morse function $f': M \to \reals$  on $M$. As $E \downarrow M$ is by assumption $L^2$-acyclic, we have $T^{An}_{(2)}(E \downarrow M,g,h) = T^{RS}_{(2)}(E \downarrow M,g,h,f')$ and analogously $T^{An}_{(2)}(E \downarrow M,g',h') = T^{RS}_{(2)}(E \downarrow M,g',h', f')$. 
Moreover, by the main result of \cite{Zhang:AN}, we have the equality of Ray-Singer anomalies \begin{align}\label{ANAN} &  \log \left(\frac{T^{An}_{(2)}(E \downarrow M,g,h)}{T^{An}_{(2)}(E \downarrow M,g',h')}\right) = \log \left(\frac{T^{RS}_{(2)}(E \downarrow M,g,h,f')}{T^{RS}_{(2)}(E \downarrow M,g',h',f')}\right) = \log \left(\frac{T^{RS}(E \downarrow M,g,h,f')}{T^{RS}(E \downarrow M,g',h', f')}\right). \end{align}
Here, $T^{RS}(E \downarrow M,g',h')$ is the (ordinary) Ray-Singer-metric as originally introduced in \cite[Definition 2.2]{Bismut:Extension} and first extended to manifolds with boundary in \cite{Bruning:An}.
Further, it is shown in \cite[Theorem 3.4]{Bruning:Glue} that there exists a density $B(g) \in \Omega^{n-1}(\partial M,\mathcal O_{\partial M})$ with $B(g) \equiv 0$ whenever $g$ is also product-like near $\partial M$, so that 
\begin{equation}\label{ANMST}  \log \left(\frac{T^{RS}(E \downarrow M,g,h,f')}{T^{RS}(E \downarrow M,g',h', f')}\right) = \frac{1}{2} \dim_{\ceals}(E) \int_{\partial M} B(g). \end{equation}
The density $B(g)$ is constructed as in \cite[Page 1103]{Bruning:Glue}. It depends only on the local geometry of $(\partial M,g|_{\partial M})$ inside $(M,g)$. \\
Using \ref{MSTOPEQ} -- \ref{ANMST} , we finally obtain
\begin{align}& \log\left(\frac{T^{An}_{(2)}(E \downarrow M,g,h)}{T^{Top}_{(2)}(M,E)}\right) = \log \left(\frac{T^{An}_{(2)}(E \downarrow M,g,h)}{T^{An}_{(2)}(E \downarrow M,g',h')}\right)  +  \log\left(\frac{T^{An}_{(2)}(E \downarrow M,g',h')}{T^{MS}_{(2)}(E \downarrow M,h',\nabla_{g'} f)}\right) \nonumber \\
& = \log \left(\frac{T^{RS}(E \downarrow M,g,h,f')}{T^{RS}(E \downarrow M,g',h',f')}\right)  =  \frac{1}{2}\dim_{\ceals}(E) \int_{\partial M}B(g), \end{align}
as desired. 
\end{proof}

\section{Product formulas, determinant class and subdivisions}
In this section, we study the effect on $L^2$-torsion and the local quantities after having taking the product of two systems. Moreover, we will make precise the anomaly of relative torsion that occurs when taking a subdivision of a Morse function and appropriate new metrics.  \\
As hinted towards in the introduction, given two Morse-Smale systems $\mathcal D_i = (E_i \downarrow M_i,g_i,h_i, \nabla_{g'_i} f_i)$ for $i=1,2$, an integral part of our methods will involve considering the product system $\mathcal D_1 \times \mathcal  D_2 = (E_1 \hat \otimes E_2 \downarrow M_1 \times M_2,g_1 \times g_2, h_1 \hat \otimes h_2, \nabla_{g_1' \times g_2'} (f_1 + f_2))$ and derive meaningful information of $\mathcal D_1 \times \mathcal D_2$ in terms of $\mathcal D_1$ and $\mathcal D_2$, and vice versa. Throughout, we assume exclusively that $M_1$ has non-empty boundary and $M_2$ has empty boundary. In this case, a problem that we have to address is that {\itshape a product of two type  \rom{2} Morse-Smale systems need not be a type \rom{2} Morse-Smale system anymore}.\\ The problem is due to the fact that the Morse function $f_1 + f_2$ doesn't necessarily fulfil condition ii of Definition \ref{dfnms} anymore (in particular, it is not necessarily constant on the boundary $\partial (M_1 \times M_2) = \partial M_1 \times M_2$). This can be remedied by deforming $f_1 + f_2$ in a sufficiently small neighborhood of $\partial M_1 \times M_2$ to be of the type \rom{2} shape as described in Definition \ref{ADMISSIBLESYSTEM}, which can be arranged in such a way that the resulting Morse function, denoted henceforth by $\underline{f_1 + f_2}$,  equals $f_1 + f_2$ outside of a small neighborhood of $\partial M_1 \times M_2$, has the same critical points as $f_1 + f_2$, the same gradient trajectories with respect to $\nabla_{g_1' + g_2'}$ and the same {\itshape unstable cells}. We denote the resulting {\itshape modified product system} by \begin{align}\label{MODIPROD} \underline{\mathcal D_1 \times \mathcal D_2} \coloneqq (E_1 \hat \otimes E_2 \downarrow M_1 \times M_2,g_1 \times g_2, h_1 \hat \otimes h_2, \nabla_{g_1' \times g_2'} (\underline{f_1 + f_2})), \end{align} and observe that $\underline{\mathcal D_1 \times \mathcal D_2}$ is of product form, respectively weakly admissible whenever both $\mathcal D_1$ and $\mathcal D_2$ are of product form, respectively weakly admissible.
Moreover, under the assumption that both $M_1$ and $M_2$ are compact, it follows immediately from the construction of $\underline{f_1 + f_2}$ that the Morse-Smale cochain complexes corresponding to $\underline{\mathcal D_1 \times \mathcal D_2}$  and $\mathcal D_1 \times \mathcal D_2$  are the same (as Hilbert $\vnN(\Gamma)$-cochain complexes). This immediately implies that \begin{align}& \label{METWARP} \log T^{Met}_{(2)}(\mathcal D_1 \times \mathcal D_2) = \log T^{Met}_{(2)}(\underline{\mathcal D_1 \times \mathcal D_2}). \end{align} In case that $E \downarrow M$ is of determinant class, we also get \begin{align}& \label{MSWARP} \log T^{MS}_{2}(\mathcal D_1 \times \mathcal D_2)  = \log T^{MS}_{(2)}(\underline{\mathcal D_1 \times \mathcal D_2}), \\&\label{ANWARP} \log T^{An}_{(2)}(\mathcal D_1 \times \mathcal D_2) = \log T^{An}_{(2)}(\underline{\mathcal D_1 \times \mathcal D_2}).\end{align}
Still, to obtain an admissible system from two admissible systems $\mathcal D_1$ and $\mathcal D_2$, we need to ensure that $h_1 \hat\otimes h_2$ is unimodular near $\partial M_1 \times M_2$, which can only be guaranteed if we assume additionally that $h_2$ is (globally) unimodular. For our purposes, this will provide no restriction at all, since we will always form products, where $E_2 \downarrow M_2$ is in fact a unitary bundle and $h_2$ is an associated unitary (and flat) metric.
Summarizing, we have the following: 
\begin{lemma} For, $i=1,2$, let $\mathcal D_i = (E_i \downarrow M_i,g_i,h_i,\nabla_{g_i'} f_i)$ be two type \rom{2} Morse-Smale systems with $\partial M_1 \neq \emptyset$ and $\partial M_2 = \emptyset$. Then, the modified product system system $\underline{\mathcal D_1 \times \mathcal D_2}$ as in \ref{MODIPROD} is also a type \rom{2} Morse-Smale system. Moreover, if both $\mathcal D_1$ and $\mathcal D_2$ are additionally of product form/weakly admissible, then also $\underline{\mathcal D_1 \times \mathcal D_2}$ is of product form/weakly admissible. Lastly, if both $\mathcal D_1$ and $\mathcal D_2$ are admissible, so that $h_2$ is globally unimodular, then $\underline{\mathcal D_1 \times \mathcal D_2}$ is also admissible.  \end{lemma}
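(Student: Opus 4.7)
Since the product data $(g_1 \oplus g_2,\, h_1 \hat\otimes h_2,\, g_1' \oplus g_2')$ inherits product form, parallelism, and collar structure automatically from the factors, the only nontrivial issue is that $f_1 + f_2$ fails the type II boundary condition of Definition \ref{dfnms}\,(ii): on $\partial(M_1\times M_2)=\partial M_1\times M_2$ it equals the non-constant function $b_1+f_2(y)$. The plan is to exhibit an explicit deformation $\underline{f_1+f_2}$ supported in a small collar of $\partial M_1\times M_2$, verify that it yields a type II Morse-Smale pair with the product metric $g_1'\oplus g_2'$, and then check the three claims (type II Morse-Smale system, product form / weak admissibility, admissibility) in turn as essentially local computations.

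\textbf{Construction and Morse-Smale verification.} Using the type II collar $\psi_{g_1'}\colon\partial M_1\times[0,\kappa_1)\to U\subseteq M_1$ for $(f_1,g_1')$, on $\partial M_1\times[0,\kappa_1)\times M_2$ one has $f_1+f_2=(b_1-t)+f_2(y)$ and $g_1'\oplus g_2'=dt^2+g_1'|_{\partial M_1}+g_2'$. Fix a cutoff $\chi\colon[0,\kappa_1)\to[0,1]$ with $\chi\equiv 0$ on $[0,\kappa_1/3]$ and $\chi\equiv 1$ on $[\kappa_1/2,\kappa_1)$, and a constant $C$ satisfying $\|\chi'\|_\infty\cdot\max_{q\in\Cr(f_2)}|f_2(q)-C|<1/2$. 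Define
\[
\underline{f_1+f_2}\;=\;\begin{cases}(b_1-t)+\chi(t)\bigl(f_2(y)-C\bigr)+C&\text{on the collar,}\\ f_1+f_2&\text{outside.}\end{cases}
\]
On $\{t\leq\kappa_1/3\}$ this equals $(b_1+C)-t$, giving the type II shape with new maximum $b=b_1+C$. A direct computation shows the $\partial_t$-component of $\nabla_{g_1'\oplus g_2'}\underline{f_1+f_2}$ on the collar equals $-1+\chi'(t)(f_2(y)-C)$, which is uniformly bounded above by $-1/2$ by the choice of $C$; hence no new critical points arise, and $\Cr(\underline{f_1+f_2})=\Cr(f_1)\times\Cr(f_2)$, all interior. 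Local triviality at each critical point $(p,q)$ follows from taking the product of the Morse charts $\phi_p$, $\phi_q$, which is valid since the deformation vanishes on a neighborhood of $\Cr(\underline{f_1+f_2})$. For Smale transversality: outside the collar the gradient flow coincides with the product of the factor flows, so transversality there reduces to that of the $(f_i,g_i')$; inside the collar, since the $\partial_t$-component of the gradient is strictly negative, every trajectory crosses the collar monotonically in $t$, and the deformed flow is a smooth $t$-dependent reparametrization of the product flow, so transverse intersections entering the collar from outside remain transverse along the crossing.

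\textbf{Product form, weak admissibility, admissibility.} If both $\mathcal D_i$ are of product form, so is $\underline{\mathcal D_1\times\mathcal D_2}$: the collar of $g_1$ supplies a product structure for $g_1\oplus g_2$ on $\partial M_1\times[0,\epsilon)\times M_2$, and the required flat bundle isometry is $\Psi_1\hat\otimes\mathrm{id}_{E_2}$, extending $(h_1|_{\partial M_1}\hat\otimes h_2)\hat\otimes 1_{\ceals}$ to the appropriate restriction of $h_1\hat\otimes h_2$. Weak admissibility is similarly inherited: compactness holds for products, $g\equiv g'$ away from the boundary and near $\Cr(\underline{f_1+f_2})\subseteq\Cr(f_1)\times\Cr(f_2)$ follows from the corresponding factor conditions, and parallelism of $h_1\hat\otimes h_2$ near $\Cr(\underline{f_1+f_2})$ holds because the flat connection on $E_1\hat\otimes E_2$ is the tensor of the factor connections. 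For admissibility, the identity $\det(A\otimes B)=\det(A)^{\dim V_2}\det(B)^{\dim V_1}$ applied to the parallel transport of $E_1\hat\otimes E_2$ gives
\[
\theta(h_1\hat\otimes h_2)\;=\;\dim(E_2)\cdot\pi_1^*\theta(h_1)\,+\,\dim(E_1)\cdot\pi_2^*\theta(h_2);
\]
restricting to $\partial M_1\times M_2$ and invoking unimodularity of $h_1|_{\partial M_1}$ (by admissibility of $\mathcal D_1$) and of $h_2$ (globally, by hypothesis) yields unimodularity of $(h_1\hat\otimes h_2)|_{\partial(M_1\times M_2)}$.

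\textbf{Main obstacle.} The only delicate point is the Smale transversality claim inside the collar, where the deformed gradient flow genuinely differs from the product flow. The resolution I have in mind is that the deformation only rescales the $M_2$-component of the velocity by the factor $\chi(t)$ while keeping the $\partial_t$-component strictly negative and $y$-dependence of it under uniform control; hence the collar crossing map of the deformed flow is a $\partial M_1$-fiberwise diffeomorphism of $M_2$ that conjugates the deformed flow to the product flow modulo a reparametrization. If one wants to avoid this smooth-conjugation argument, an alternative is simply to shrink $\kappa_1$ (and with it the size of the deformation) so that the deformed flow is $C^1$-close to the product flow and transversality is preserved by openness of the transverse condition.
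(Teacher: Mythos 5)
Your overall route is the intended one: the paper itself gives no detailed proof of this lemma (it is stated as a summary of the preceding discussion), and, like the paper, you deform $f_1+f_2$ only in a collar of $\partial M_1\times M_2$ to achieve the type \rom{2} shape and then read off product form, weak admissibility, and (via the formula $\theta(h_1\hat\otimes h_2)=\dim(E_2)\,\pi_1^*\theta(h_1)+\dim(E_1)\,\pi_2^*\theta(h_2)$, which is exactly the paper's Equation \ref{prodvolch}) the admissibility statement under global unimodularity of $h_2$. That part of your argument is fine. However, your explicit deformation has a genuine gap: the constant $C$ you require generally does not exist. Since $\chi$ must climb from $0$ to $1$ on an interval of length $\kappa_1/6$, one has $\|\chi'\|_\infty\geq 6/\kappa_1$, so your condition $\|\chi'\|_\infty\cdot\max_{q\in\Cr(f_2)}|f_2(q)-C|<1/2$ forces the oscillation of $f_2$ (whose extremes are critical values) to be smaller than roughly $\kappa_1/6$; but $f_2$ and the collar width $\kappa_1$ are given independently, so for a general closed $(M_2,f_2)$ this fails. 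When it fails, genuine new critical points appear: on the collar the $\partial M_1$-gradient vanishes identically and the $M_2$-gradient is $\chi(t)\nabla_{g_2'}f_2$, so every point $(p,t,q)$ with $q\in\Cr(f_2)$ and $\chi'(t)(f_2(q)-C)=1$ is critical for your $\underline{f_1+f_2}$. Your two fallback remedies do not repair this: shrinking $\kappa_1$ makes matters worse, because the gradient perturbation has size of order $\mathrm{osc}(f_2)/\kappa_1$ and so cannot be made $C^1$-small; and the deformed flow is not a reparametrization of the product flow, since the ratio between the $\partial_t$- and $M_2$-components of the gradient is changed in a $t$- and $y$-dependent way.

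The fix is to abandon the requirement that the $t$-slope be $-1$ throughout the collar: the type \rom{2} condition only demands $b-t$ on \emph{some} sub-collar near $t=0$. Take $\underline{f_1+f_2}=\beta(t)+\chi(t)\bigl(f_2(y)-C\bigr)+C$ with $\beta(t)=b_1-t$ for $t\geq\kappa_1/2$, $\beta$ strictly decreasing with $-\beta'(t)>1+\|\chi'\|_\infty\cdot\sup_{y}|f_2(y)-C|$ on the transition zone, $\beta$ of slope exactly $-1$ near $t=0$, and $\beta(0)$ chosen so large that $\beta(0)+C$ exceeds $\sup(f_1+f_2)$, hence is the maximum (adjust by an overall constant if one insists on $b\in\mathbb Z$). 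Then the inward component of $-\nabla_{g_1'\oplus g_2'}\underline{f_1+f_2}$ is strictly positive on the deformed region, so no new critical points arise, and your "main obstacle" disappears for a structural reason that needs no conjugation or openness argument: any point of the deformed region leaves $M_1\times M_2$ through the boundary under the backward flow, hence lies in no unstable manifold $W^-$, for the deformed flow and for the product flow alike. Consequently all intersections $W^-\cap W^+$ lie in the region where the deformed flow literally coincides with the product flow, where Smale transversality follows from $W^{\pm}\bigl((p,q)\bigr)=W^{\pm}(p)\times W^{\pm}(q)$ and transversality of the factors; the same observation yields the identity of critical points, connecting trajectories and unstable cells with those of $f_1+f_2$, which the paper uses immediately afterwards in \ref{METWARP}--\ref{ANWARP}.
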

The first product formula that we state is as follows is as follows
\begin{proposition}[Product Formula 1]\label{PRODFORM2}
For $i=1,2$, let $\mathcal D_i  = (E_i \downarrow M_i,g_i,h_i, \nabla_{g'_i} f_i)$ be two type \rom{2} Morse-Smale systems with $M_1$ compact, $\partial M_1 \neq \emptyset$ and with $M_2$ closed. Then, the type \rom{2} Morse-Smale system $\underline{\mathcal D_1 \times \mathcal D_2}$ is also of determinant class and we get
\begin{enumerate} \item $\log T^{An}_{(2)}(\underline{\mathcal D_1 \times \mathcal D_2}) = \chi(M_1,E_1) \log T^{An}_{(2)}(\mathcal D_2) + \log T^{An}_{(2)}(\mathcal D_1) \chi(M_2,E_2)$,
\item $ \log T^{Met}_{(2)} (\underline{\mathcal D_1 \times \mathcal D_2}) = \chi(M_1, E_1) \log T^{Met}_{(2)}(\mathcal D_2) +  \log T^{Met}_{(2)}(\mathcal D_1) \chi(M_2,E_2)$,
\item  $\log T^{MS}_{(2)} (\underline{\mathcal D_1 \times \mathcal D_2}) = \chi(M_1,E_1) \log T^{MS}_{(2)}(\mathcal D_2) + \log T^{MS}_{(2)}(\mathcal D_1) \chi(M_2,E_2)$,
\item $\mathcal R(\underline{\mathcal D_1 \times \mathcal D_2}) = \chi(M_1,E_1) \mathcal R(\mathcal D_2) + \mathcal R(\mathcal D_1) \chi(M_2,E_2)$.
\end{enumerate}
\end{proposition}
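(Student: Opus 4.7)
The plan is to first apply equations (5.1)--(5.3) from the paragraphs preceding the proposition, which reduce every identity to the analogous statement for the un-modified product system $\mathcal D_1 \times \mathcal D_2$. Once we have done this, the key observation is that the whole $L^2$-analytic/combinatorial machinery attached to $\mathcal D_1 \times \mathcal D_2$ splits, up to natural $\vnN(\Gamma_1 \times \Gamma_2)$-isomorphisms (with $\Gamma_i = \pi_1(M_i)$), as the Hilbert tensor product of the corresponding objects attached to $\mathcal D_1$ and $\mathcal D_2$. So the task is to convert each assertion into a standard tensor-product statement and invoke the known multiplicativity of $\vndet$ and $\vntr$ under $\hat\otimes$.

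First I would establish the tensor-product decompositions themselves. On the Morse-Smale side, $\Cr(f_1+f_2) = \Cr(f_1)\times\Cr(f_2)$, with indices adding and unstable cells splitting as products, so the boundary operator of the product system is $\partial^*_{MS,1} \hat\otimes \unit + (-1)^{|\cdot|} \unit \hat\otimes \partial^*_{MS,2}$ on $C_{(2)}^*(\widetilde{\mathcal D_1}) \hat\otimes C_{(2)}^*(\widetilde{\mathcal D_2})$. On the analytic side, under the K\"unneth identification $\Omega^*_{(2)}(\widetilde{M_1\times M_2}, \widetilde{E_1\hat\otimes E_2}) \cong \Omega^*_{(2)}(\widetilde{M_1},\widetilde{E_1}) \hat\otimes \Omega^*_{(2)}(\widetilde{M_2},\widetilde{E_2})$, the product Laplacian decomposes as $\Delta = \Delta^{(1)} \hat\otimes \unit + \unit \hat\otimes \Delta^{(2)}$ (and absolute boundary conditions on $M_1\times M_2$ pull back to absolute boundary conditions on $M_1$ times no conditions on $M_2$, since $\partial M_2 = \emptyset$). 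From these decompositions and Proposition \ref{PROP1}, determinant class of the factors immediately gives determinant class of $\underline{\mathcal D_1\times\mathcal D_2}$.

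For (iii), the product formula for $L^2$-torsion of tensor products of finite type, determinant class Hilbert $\vnN(\Gamma)$-cochain complexes (c.f.\ \cite[Theorem 3.35]{Lueck:book}) directly yields $\log T^{MS}_{(2)}(\mathcal D_1 \times \mathcal D_2) = \chi^{(2)}(M_1,E_1) \log T^{MS}_{(2)}(\mathcal D_2) + \log T^{MS}_{(2)}(\mathcal D_1)\chi^{(2)}(M_2,E_2)$, where $\chi^{(2)}(M_i,E_i)$ denotes the $L^2$-Euler characteristic of the $L^2$-Morse-Smale complex; this equals the ordinary $\chi(M_i,E_i)$ by Atiyah's $L^2$-index theorem applied to the finite-dimensional K\"unneth components. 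For (i), I would run the classical Ray-Singer argument in the $L^2$-setting: the factorization $e^{-t\Delta} = e^{-t\Delta^{(1)}} \hat\otimes e^{-t\Delta^{(2)}}$ forces $\vntr(e^{-t\Delta_k^\perp}) = \sum_{p+q=k}\vntr(e^{-t\Delta_p^{\perp,1}})\vntr(e^{-t\Delta_q^{\perp,2}}) + K_{pq}(t)$, where the $K_{pq}$ are explicit terms involving the finite-dimensional harmonic spaces. Feeding this into the Mellin transform defining $\zeta_k$, regrouping and using $\sum_k (-1)^k k (a_{k-q} + b_q) = \chi(\ldots)\cdot(\text{torsion terms})$ gives (i). The absence of boundary for $M_2$ is essential here so that the heat kernel factorization holds globally and no cross-boundary corrections appear.

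For (ii), one checks that under the K\"unneth identifications of $\ker\Delta$ and of $L^2$-Morse-Smale cohomology, the harmonic integration map $\Theta^*$ of the product factors as $\Theta^*_1 \hat\otimes \Theta^*_2$. The multiplicativity $\log\vndet(A\hat\otimes B) = \vndim(\dom B)\log\vndet(A) + \log\vndet(B)\vndim(\dom A)$ for Fuglede-Kadison determinants, combined with a graded summation using $\vndim\bigl(\bigoplus_{p+q=k}\mathcal H_p^{(1)} \hat\otimes \mathcal H_q^{(2)}\bigr)$ and a sign bookkeeping argument identical to that in the $T^{MS}$ case, yields (ii). Finally, (iv) is formal: $\mathcal R = \log T^{An}_{(2)} - \log T^{Met}_{(2)} - \log T^{MS}_{(2)}$, so it follows by subtraction from (i)--(iii).

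The main obstacle is step (i): making the formal manipulation of $\zeta_k'(0)$ rigorous requires justifying the interchange of Mellin transform and the K\"unneth splitting, and controlling the $t\to 0^+$ rational asymptotic so that the extracted regular value at $s=0$ of the product zeta functions indeed reassembles as claimed. This is carried out exactly as in the unitary/closed case and carries over verbatim, because the boundary behaviour only affects the constants in the short-time asymptotic and not the structure of the product decomposition.
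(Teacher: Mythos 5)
Your proposal takes essentially the same route as the paper's proof: the paper's argument reduces the statement to the unmodified product system $\mathcal D_1\times\mathcal D_2$ via Equations \ref{METWARP}--\ref{ANWARP}, then cites \cite[Proposition 1.21, Proposition 4.2]{Friedlander:Bd}, noting that their K\"unneth/tensor-product argument carries over verbatim to the present setting (after a small adjustment to the definition of the auxiliary subspace $\Lambda^{-,q}$ so as to accommodate absolute boundary conditions), and finally derives (4) from (1)--(3). What you have done is spell out the content of that citation --- the K\"unneth factorization of the de Rham and Morse--Smale complexes, the heat-semigroup factorization $e^{-t\Delta}=e^{-t\Delta^{(1)}}\hat\otimes e^{-t\Delta^{(2)}}$, the Mellin-transform bookkeeping for the zeta function, and the factorization $\Theta^*=\Theta_1^*\hat\otimes\Theta_2^*$ of the integration map --- which is precisely what \cite{Friedlander:Bd} supplies. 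Two small cosmetic remarks: the equality $\chi^{(2)}(M_i,E_i)=\chi(M_i)\dim(E_i)$ is just the Euler--Poincar\'e principle for the finite-type Hilbert $\vnN(\Gamma)$-cochain complex (additivity of $\vndim$) rather than Atiyah's $L^2$-index theorem; and for the determinant-class claim you should appeal to the K\"unneth decomposition together with multiplicativity of the spectral density (or directly to (1)--(3)) rather than to Proposition \ref{PROP1}, which concerns chain homotopy equivalences and does not by itself yield determinant class of a tensor product. Neither issue affects the correctness of your argument.
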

\begin{proof} $(1) - (3)$: If we replace $\underline{\mathcal D_1 \times \mathcal D_2}$ by the genuine product system $\mathcal D_1 \times \mathcal D_2$, the equalities are well-known. Namely, the proofs presented in \cite[Proposition 1.21, Proposition 4.2]{Friedlander:Bd} can be copied line by line, after changing the definition of $\Lambda^{-,q}(M,E)$ to be the $C^\infty$-closure of $d^*_q\left(\Omega^{q+1}(M,\partial M,E)\right)$. Now apply \ref{METWARP}-\ref{ANWARP}. $(4)$ is an immediate consequence of $(1) - (3)$. 
\end{proof} 
In addition, we will need to analyze the behavior under taking products of the local quantities introduced in the previous section. Here, the assumption that the Hermitian forms are unimodular at the boundary becomes essential. \\
For this, note first that we have a natural embedding $\Omega^*(M_1) \otimes \Omega^*(M_2) \hookrightarrow \Omega^*(M_1 \times M_2)$ (which is dense under the natural $C^\infty$-topology). By passing to local trivializations over coordinate charts, one easily sees that the $1$-form $\theta(h_1 \hat \otimes h_2)$ lies  in $\Omega^*(M_1) \otimes \Omega^*(M_2)$ and 
is of the form \begin{equation}\label{prodvolch} \theta(h_1 \hat \otimes h_2) = \theta(h_1) \otimes \dim(E_2) + \dim(E_1) \otimes \theta(h_2). \end{equation} 
Furthermore, it has been shown in \cite[pages 63-64]{Friedlander:Rel} (see also \cite[Chapter 4]{Bismut:Extension} or \cite[Theorem 2.7]{Bismut:Groth} for additional details) that 
\begin{align}& \nabla_{g_1' \times g_2'}(f_1 + f_2)^*\Psi (T(M_1 \times M_2), g_1 \times g_2) ) = (\nabla_{g_1'}f_1)^*\Psi(TM_1,g_1) \otimes  e(TM_2,g_2) \nonumber \\\label{prodmatq} &  +  e(TM_1,g_1) \otimes (\nabla_{g_2'}f_2)^* \Psi(TM_2,g_2) \end{align} on $M_1 \times M_2 \setminus \Cr(f_1 + f_2) = M_1 \times M_2 \setminus \Cr(f_1) \times C(f_2)$. 
Here, for a Riemannian manifold $(M,g)$, the {\bfseries Euler form} $e(M,g) \in \Omega^{\dim(M)}(M,\mathcal O_{M})$ is a density defined using Chern-Weil theory. It has the property that $e(M,g) \equiv 0$ whenever $M$ is odd-dimensional. Moreover, if $M$ is closed, it is a representative of the Euler class of the tangent bundle $TM \downarrow M$. By the Gauss-Chern-Bonnett theorem, it then follows that \begin{equation} \int_{M} e(M,g) = \chi(M), \end{equation} if $M$ is closed. We refer \cite[Page 1103]{Bruning:Glue} for an explicit formula for $e(M,g)$. \\
Combining \ref{prodvolch} with \ref{prodmatq}, we get
\begin{align} & \theta(h_1 \hat \otimes h_2) \wedge \nabla_{g_1' \times g_2'}(f_1 + f_2)^*\Psi (T(M_1 \times M_2), g_1 \times g_2)   = \theta(h_1) \wedge (\nabla_{g_1'}f_1)^*\Psi(TM_1,g_1)  \otimes \dim(E_2)e(TM_2,g_2)  \nonumber \\ \label{prod1} & + \dim(E_1)e(TM_1,g_1) \otimes  \theta(h_2) \wedge (\nabla_{g_2'}f_2)^*\Psi(TM_2,g_2)  \end{align}
on $M_1 \times M_2 \setminus \Cr(f_1 \times f_2)$. Here, we have used that $\theta(h_i) \wedge e(TM_i,g_i) \in \Omega^{\dim(M_i)+1}(M_i,\mathcal O_{M_i}) = \{0\}$ for both $i=1,2$. 
\begin{lemma}[Product Formula 2] \label{fac1} For $i =1,2$, let $\mathcal D_i \coloneqq (E_i \downarrow M_i,g_i,h_i, \nabla_{g'_i} f_i)$ be two type \rom{2} Morse-Smale systems of product form, so that both $h_1|_{\partial M}$ and $h_2$ are unimodular. Then, it holds that 
\begin{align} & \theta(h_1 \hat\otimes h_2) \wedge \nabla_{g_1' \times g_2'}(\underline{f_1 + f_2})^*\Phi(T(M_1 \times M_2),g_1 \times g_2), \nonumber \\ &=  \theta(h_1) \wedge (\nabla_{g_1'}f_1)^*\Psi(TM_1,g_1) \otimes \dim(E_2) \cdot e(TM_2,g_2) \end{align}
on all of $M \setminus \Cr(\underline{f_1 + f_2})$. In particular, if either $M_2$ is odd-dimensional or $h_1$ is also unimodular, then 
\begin{align}\label{vanish2} & \theta(h_1 \hat\otimes h_2) \wedge \nabla_{g_1' \times g_2'}(\underline{f_1 + f_2})^*\Phi(T(M_1 \times M_2),g_1 \times g_2)  = 0. \end{align} \end{lemma}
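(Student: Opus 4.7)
The plan is to apply the pointwise product formula \ref{prod1} stated immediately above the lemma and then exploit the unimodularity hypotheses to collapse the various terms. Since $h_2$ is globally unimodular by hypothesis, $\theta(h_2) \equiv 0$ on $M_2$, which makes the second summand on the right-hand side of \ref{prod1} vanish identically. Thus for the \emph{unmodified} Morse function $f_1 + f_2$, equation \ref{prod1} reduces to exactly the asserted identity on the open locus where both pullbacks are defined.

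Passing from $f_1+f_2$ to $\underline{f_1 + f_2}$ requires a brief verification on the collar $U$ of $\partial M_1 \times M_2$ in which the modification takes place. Outside $U$ we have $\underline{f_1+f_2} = f_1 + f_2$ with identical gradient vector field and unstable manifolds, so \ref{prod1} applies directly and, by the previous paragraph, yields the claim there. On $U$ itself I would show that both sides vanish independently. The product-form hypothesis on $\mathcal D_1$ identifies $h_1$ on a collar of $\partial M_1$ with $h_1|_{\partial M_1} \hat \otimes 1_{\mathbb C}$ via a flat bundle isometry; since both $h_1|_{\partial M_1}$ and the trivial metric $1_{\mathbb C}$ are unimodular, the product formula \ref{prodvolch} applied fiberwise on this collar gives $\theta(h_1) \equiv 0$, and combined with the unimodularity of $h_2$ a second application of \ref{prodvolch} yields $\theta(h_1 \hat \otimes h_2) \equiv 0$ on $U$. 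The right-hand side of the asserted identity contains the same factor $\theta(h_1)$ and therefore also vanishes on $U$. Patching the two regions gives the identity on all of $M_1 \times M_2 \setminus \Cr(\underline{f_1+f_2})$.

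For the \emph{in particular} statement, the identity just established makes both cases immediate: if $M_2$ is odd-dimensional, then $e(TM_2,g_2) \equiv 0$ by the property of the Euler form recorded in the paragraph preceding the lemma, so the right-hand side is zero; if instead $h_1$ is globally unimodular, then $\theta(h_1) \equiv 0$ and the right-hand side again vanishes. The main, and genuinely only, obstacle in the plan is the collar computation, where one must use that the bundle isometry provided by the product-form condition is \emph{flat}, so that the flat connection on $E_1$ restricted to the collar pulls back to the product flat connection on $E_1|_{\partial M_1} \hat \otimes E_{\mathbb C}$; without this, the clean factorization of $\theta(h_1)$ needed to invoke \ref{prodvolch} would not be available.
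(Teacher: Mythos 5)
Your proof is correct and follows essentially the same route as the paper: both split $M_1 \times M_2$ into the collar region $U \times M_2$ (where $\theta(h_1)$, and hence both sides, vanish by the product-form hypothesis together with unimodularity of $h_1|_{\partial M_1}$ and $h_2$) and its complement (where $\underline{f_1+f_2}=f_1+f_2$, so formula \ref{prod1} applies directly and the second summand drops out since $\theta(h_2)\equiv 0$). Your remark that the flatness of the product-form bundle isometry is the crux of the collar computation is a fair and correct emphasis of a step the paper leaves implicit.
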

\begin{proof} Due to the assumption that $h_1|_{\partial M_1}$ and $h_2$ both are unimodular, it follows from \ref{prodvolch} that  $h_1|_{\partial M_1} \hat\otimes h_2$ determines a unimodular metric on the restriction bundle $E|_{\partial(M_1 \times M_2)} = E|_{\partial M_1 \times M_2}$. Since the system $\mathcal D_1$ is of product form, this allows us to choose a small neighborhood $U$ of $\partial M_1$, so that $\theta(h_1)\equiv 0$ on $U$. Together with Equation \ref{prodvolch} and $\theta(h_2) \equiv 0$ everywhere on $M_2$, we deduce that
\begin{equation}\label{vanish3} \theta(h_1 \hat\otimes h_2) \equiv 0 \hspace{1cm} \text{on} \; U \times M_2. \end{equation} 
By choosing $U$ smaller, if necessary, we also have by construction $\underline{f_1 + f_2} = f_1 + f_2$ on $(M_1 \setminus U )\times M_2$, and therefore the equality of gradients
\begin{equation}\label{equal2} \nabla_{g_1' \times g_2'}(\underline{f_1 + f_2}) = \nabla_{g_1' \times g_2'}(f_1 + f_2) \hspace{1cm} \text{on} \; (M_1 \setminus U) \times M_2. \end{equation}
The result now follows from \ref{vanish3}, \ref{equal2} and the product formula \ref{prod1}. 
\end{proof} 
Apart from considering products of systems, we will also have to investigate in the anomaly of the relative torsion that arises when changing the metrics of a given system. In fact, we will only look at  anomalies under the assumption that the metrics are left unchanged in a neighborhood of $\partial M$. The proposition below covers this situation, generalizing \cite[Proposition 5.1,5.2]{Friedlander:Rel} onto odd-dimensional Manifolds with boundary with product metrics near $\partial M$. 
\begin{proposition}[Metric anomaly with boundary conditions]\label{METANOREAL} Let $\mathcal D_i = (E \downarrow M, g_i,h_i, \nabla_{g} f)$ for $i=1,2$ be two Morse-Smale Systems with $M$ odd-dimensional, such that either \begin{enumerate}\item near $\partial M$, $g_1 \equiv g_2$ are of product form
and $h_1|_{\partial M} \equiv h_2|_{\partial M}$, or \item near $\partial M$, $g_1$ and $g_2$ are of product form and $h_1|_{\partial M} \equiv h_2|_{\partial M}$ is unimodular. \end{enumerate} Then 
\begin{equation}\mathcal R(\mathcal D_1)-\mathcal R(\mathcal D_2)= \sum_{p \in \Cr(f)} (-1)^{\ind(p)} \log \left( \det( h_1(p)^{-1} \circ h_2(p) ) \right). \end{equation}
\end{proposition}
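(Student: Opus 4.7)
The plan is to interpolate between $\mathcal D_1$ and $\mathcal D_2$ along a smooth path of Morse-Smale systems $\mathcal D_s = (E \downarrow M, g_s, h_s, \nabla_{g_s'} f)$ for $s \in [1,2]$ that preserves the boundary hypotheses, differentiate $\mathcal R(\mathcal D_s)$ in $s$, and integrate. In case (1), I arrange that $(g_s, h_s)$ agrees with $(g_1, h_1) = (g_2, h_2)$ on a fixed collar of $\partial M$ for all $s$; in case (2), I arrange that $g_s$ is of product form on a collar, that $h_s|_{\partial M}$ equals the fixed unimodular form $h_1|_{\partial M} = h_2|_{\partial M}$, and that $h_s$ is of product form near $\partial M$ (possible by Lemma \ref{UNIMODEX}). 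Such paths exist via convex combination away from $\partial M$, glued to the boundary data via a smooth cutoff.

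Writing $\mathcal R(\mathcal D_s) = \log T^{An}_{(2)}(E \downarrow M, g_s, h_s) - \log T^{Met}_{(2)}(\mathcal D_s) - \log T^{MS}_{(2)}(E \downarrow M, h_s, \nabla_{g_s'} f)$, I compute each summand's variation separately. The variation of $\log T^{MS}_{(2)}$ depends only on $\dot h_s$: applying Proposition \ref{PROP2} to the identity map on the $L^2$-Morse-Smale complex equipped with the $s$-varying inner product induced by $h_s$ and differentiating reduces this to the local expression $\sum_{p \in \Cr(f)}(-1)^{\ind(p)} \tr\bigl(h_s(p)^{-1} \circ \dot h_s(p)\bigr)$ plus a cohomological correction involving the derivatives of the Dodziuk-Schick-Shubin isomorphisms $\Theta^k$. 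The variation of $\log T^{Met}_{(2)}$ produces exactly the negative of this cohomological correction, since $T^{Met}_{(2)}$ is defined directly through $\Theta^*$, and so the two contributions cancel against each other.

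The crucial piece is the variation of $\log T^{An}_{(2)}$. Here I would invoke the $L^2$-analogue of the Bismut-Zhang variational formula established by L\"uck-Schick \cite{Lueck:hyp} and refined by Ma-Zhang \cite{Zhang:AN}, which expresses $\tfrac{d}{ds}\log T^{An}_{(2)}(\mathcal D_s)$ as an interior bulk integral against $\theta(h_s)$ and local data of $g_s$, plus a boundary contribution supported on $\partial M$. Under hypothesis (1), the boundary contribution vanishes trivially because $\dot g_s$ and $\dot h_s$ are identically zero near $\partial M$. Under hypothesis (2), it vanishes because the boundary integrand decomposes into pieces each of which involves either the intrinsic Euler density $e(T\partial M, g_s|_{\partial M})$ of the odd-dimensional closed manifold $\partial M$ (which is identically zero) or the $1$-form $\theta(h_s|_{\partial M})$ (which vanishes by unimodularity, and in fact $\theta(h_s) \equiv 0$ on the entire collar by the product-form assumption).

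The remaining interior integrand matches precisely the derivative in $s$ of the Zhang closed-manifold expression of Theorem \ref{CLOSINV2}. A localization argument analogous to the one in \cite[Proposition 5.1]{Friedlander:Rel} and \cite[Section 4]{Zhang:CM} -- Stokes' theorem applied on $M \setminus \bigsqcup_p B_\epsilon(p)$ followed by $\epsilon \to 0$, exploiting the delta-type singularity of $\nabla_{g_s'}f^*\Psi(TM, g_s)$ at each $p \in \Cr(f)$ -- collapses this bulk term to $\sum_{p \in \Cr(f)} (-1)^{\ind(p)} \tr\bigl(h_s(p)^{-1} \circ \dot h_s(p)\bigr)$. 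Integrating from $s=1$ to $s=2$ and using $\tfrac{d}{ds}\log\det(h_s(p)) = \tr\bigl(h_s(p)^{-1}\dot h_s(p)\bigr)$ gives the desired formula. The main obstacle is the rigorous verification that the boundary contribution to the analytic variation vanishes in case (2); this requires a careful unpacking of the explicit form of the Ma-Zhang boundary integrand and using the odd-dimensionality of $\partial M$ together with the unimodularity of $h|_{\partial M}$ simultaneously.
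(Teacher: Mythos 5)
Your variational strategy is a genuine departure from the paper's proof, which is a short algebraic reduction with no differentiation in a parameter. The paper writes $\mathcal R(\mathcal D_1)-\mathcal R(\mathcal D_2)$ as a sum of three log-ratios, factors the $T^{Met}_{(2)}$-ratio through a commutative square into $\sum(-1)^k\log\vndet(\tau^k)$ (an isomorphism on harmonic spaces) plus $\sum(-1)^k\log\vndet(\unit_{[h_1,h_2]}^k)$ (the identity chain map seen between the two $h_i$-inner products on the MS complex), applies Proposition \ref{PROP2} to combine the latter with the $T^{MS}_{(2)}$-ratio into exactly the critical-point sum, and then invokes the Ma--Zhang anomaly theorem \cite{Zhang:AN} together with Br\"uning--Ma \cite[Theorem~3.4]{Bruning:Glue} to show that the analytic ratio plus the $\tau^k$-term is the anomaly of the ordinary Ray--Singer metric, which vanishes in odd dimensions under either hypothesis. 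No path $\mathcal D_s$, no Stokes argument, no localization.

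Your proposal has several real gaps. First, the claimed cancellation between the ``cohomological correction'' in the $T^{MS}_{(2)}$-variation and the $T^{Met}_{(2)}$-variation is asserted but not substantiated; this is precisely what the paper's use of Proposition \ref{PROP2} and the $\tau^*/\unit_{[h_1,h_2]}^*$ factorization makes rigorous. Second, you invoke an ``$L^2$-Bismut--Zhang variational formula'' attributed to \cite{Lueck:hyp,Zhang:AN} giving an explicit bulk-plus-boundary decomposition of $\tfrac{d}{ds}\log T^{An}_{(2)}$, but what the Ma--Zhang theorem actually provides is an anomaly formula equating the difference of $L^2$-torsions to the difference of ordinary Ray--Singer metrics; deriving the pointwise-in-$s$ differential statement you need, and verifying the precise form of its boundary term in the $L^2$ setting, is a nontrivial intermediate result not available off the shelf. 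Third, your Stokes-localization of the bulk integral to the critical-point sum omits the standard interior Euler-form contribution $\int_M \theta(\dot h_s)\wedge e(TM,g_s)$, which must also be shown to vanish; it does, but only because $M$ is odd-dimensional, and you never invoke this. Finally, Stokes' theorem on $M\setminus\bigsqcup_p B_\epsilon(p)$ produces boundary terms on $\partial M$ as well as on the small spheres; you address the $\partial M$-term for the analytic piece, but the localization step needs the same care.
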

\begin{proof} First, observe that
\begin{align}\label{ANO1}& \mathcal R(\mathcal D_1) - \mathcal R(\mathcal D_2) = \log \left(\frac{ T^{An}_{(2)}(\mathcal D_1)}{T^{An}_{(2)}(\mathcal D_2)}\right) + \log \left(\frac{T^{Met}_{(2)}(\mathcal D_2)}{T^{Met}_{(2)}(\mathcal D_1)}\right) +\log\left(\frac{T^{MS}_{(2)}(\mathcal D_2)}{T^{MS}_{(2)}(\mathcal D_1)}\right). \end{align}
Furthermore, we have
\begin{align}\label{METANO} \frac{T^{Met}_{(2)}(\mathcal D_2)}{T^{Met}_{(2)}(\mathcal D_1)} = \sum_{k=0}^n (-1)^k \log\left( \frac{\vndet(\Theta_2^k)}{\vndet(\Theta_1^k)}\right), \end{align} where $\Theta_i^*: \mathcal H^*(\widetilde{M},\widetilde{g_i},\widetilde{E},\widetilde{h_i}) \to H_{(2)}^*(\widetilde{M},E,h_i,\nabla_{g}f)$ are the isomorphisms of finitely generated Hilbert $\vnN(\Gamma)$-modules as defined in \ref{MSINTG}. We let \begin{equation} \unit_{[h_1,h_2]}^*:  H_{(2)}^*(\widetilde{M},\nabla_{\widetilde{g}}\widetilde{f},\widetilde{E},\widetilde{h_1}) \to H_{(2)}^*(\widetilde{M},\nabla_{\widetilde{g}}\widetilde{f},\widetilde{E},\widetilde{h_2}) \end{equation} be the isomorphism of Hilbert $\vnN(\Gamma)$-modules induced by the (not necessarily unitary) identity map $\unit_{[h_2,h_1]}^*: C^*_{(2)}(\widetilde{M},\nabla_{\widetilde{g}}\widetilde{f},\widetilde{E},\widetilde{h_2}) \to C^*_{(2)}(\widetilde{M},\nabla_{\widetilde{g}}\widetilde{f},\widetilde{E},\widetilde{h_1})$. Also, we let \begin{equation}\tau^*: \mathcal H^*(\widetilde{M},\widetilde{g_2},\widetilde{E},\widetilde{h_2}) \to \mathcal H^*(\widetilde{M},\widetilde{g_1},\widetilde{E},\widetilde{h_1})\end{equation} be the isomorphism of Hilbert $\vnN(\Gamma)$-modules making the diagram below commute. 
\begin{equation} \begin{tikzcd}  \mathcal H^*(\widetilde{M},\widetilde{g_1},\widetilde{E},\widetilde{h_1}) \ar{r}{\Theta_1^*} & H_{(2)}^*(\widetilde{M},\nabla_{\widetilde{g}}\widetilde{f},\widetilde{E},\widetilde{h_1}) \ar{d}{\unit_{[h_1,h_2]}^*}  \\ \mathcal H^*(\widetilde{M},\widetilde{g_1},\widetilde{E},\widetilde{h_2})\ar{u}{\tau^*} \ar{r}{\Theta_2^*} & H_{(2)}^*(\widetilde{M},\nabla_{\widetilde{g}}\widetilde{f},\widetilde{E},\widetilde{h_2})\end{tikzcd} \end{equation}
From the multiplicativity of the Fuglede-Kadison determinant \cite[Theorem 3.14]{Lueck:book}, it follows that
\begin{equation} \vndet(\tau^*) \vndet(\unit_{[h_1,h_2]}^*) = \vndet(\Theta_1^*)^{-1} \vndet(\Theta_2^*). \end{equation}
Therefore, Equation \ref{METANO} decomposes into
\begin{equation}\label{ANO2} \log \left(\frac{T^{Met}_{(2)}(\mathcal D_2)}{T^{Met}_{(2)}(\mathcal D_1)}\right) = \sum_{k = 0}^n (-1)^k \log \det(\tau^k)  + \sum_{k=0}^n (-1)^k \log \det(\unit_{[h_1,h_2]}^k) . \end{equation} 
By Proposition \ref{PROP2}, we have 
\begin{equation} \sum_{k=0}^n (-1)^k \log \det(\unit_{[h_1,h_2]}^k) +\log\left(\frac{T^{MS}_{(2)}(\mathcal D_2)}{T^{MS}_{(2)}(\mathcal D_1)}\right) = \sum_{p \in \Cr(f)} (-1)^{\ind(p)} \log \left( \det( h_1(p)^{-1} \circ h_2(p) ) \right). \end{equation}
For the remaining term, it is due to the main Theorem of \cite{Zhang:AN} that we have an equality
\begin{equation} \log \left(\frac{ T^{An}_{(2)}(\mathcal D_1)}{T^{An}_{(2)}(\mathcal D_2)}\right)  + \sum_{k=0}^n (-1)^k \log \det(\tau^k) = \log \left(\frac{ T^{RS}(\mathcal D_1)}{T^{RS}(\mathcal D_2)}\right). \end{equation}
Here, $T^{RS}(\mathcal D_i)$ denotes the {\itshape Ray-Singer Torsion element} as originally defined in \cite[Definition 2.2]{Bismut:Extension}. It is shown in \cite[Theorem 3.4]{Bruning:Glue} that, under the conditions that $M$ is odd-dimensional and either one of the two assertions mentioned in the statement of the proposition is satisfied, one has
\begin{equation}\label{ANO3} \log \left(\frac{ T^{RS}(\mathcal D_1)}{T^{RS}(\mathcal D_2)}\right) = 0. \end{equation}
The result direct follows from from \ref{ANO1} and \ref{ANO2}-\ref{ANO3}. 
\end{proof}

\begin{dfn}[Subdivision]  Let $M$ be a compact manifold and for $i=0,1$, let $(f_i,g_i)$ be a Morse-Smale pair. Then $(f_1,g_1)$ is called a {\bfseries subdivision} of $(f_0,g_0)$ if all of the following conditions are satisfied 
\begin{enumerate} \item $\Cr_p(f_0) \subseteq \Cr_p(f_1) \subseteq \bigcup_{x \in \Cr(f_0)} W^-_x(f_0)$ for each $0 \leq p \leq n$, 
\item $W^-_x(f_1) \subseteq W^-_x(f_0)$ for each $x \in \Cr(f_0)$, 
\item $W^-_x(f_0) = \bigcup_{y \in \Cr(f_1) \cap W^-_x(f_0)} W^-_y(f_1)$, and
\item $g_0 \equiv g_1$ near $\Cr(f_0) \cup \partial M$ and and $f_0 \equiv f_1$ near $\partial M$.
\end{enumerate}
\end{dfn}

We now describe the effect on the relative torsion under taking taking subdivisions. 
For that, let $M$ be a compact manifold, let $(f_i,g_i)$ be a Morse-Smale pair on $M$ for $i =0,1$, so that $(f_1,g_1)$ is a subdivision of $(f_0,g_0)$. Let $h$ be Hermitian form 
on a flat bundle $E \downarrow M$. By definition, there exists for each $y \in \Cr(f_1)$ a unique $x \in \Cr(f_0)$ satisfying $y \in W^-_x(f_0)$. Let $\widetilde{h}(y) \in \GL(E_y,\overline{E_y^*})$ be the Hermitian metric on $E_y$ obtained by parallel transport of the metric $h(x) \in \GL(E_x,\overline{E_x^*})$ along a curve connecting $x$ and $y$ that is entirely contained within $W^-_x(f_0)$. Note that since $W^-_x(f)$ is simply-connected, the resulting metric doesn't depend on the particular choice of curve. Note also that $\widetilde{h}(y) = h(y)$ whenever $h$ is a unitary metric. \\
For each $y \in \Cr(f_1)$, define \begin{equation} \omega(y) \coloneqq \log \det(\widetilde{h}(y)^{-1} \circ h(y) ) \in \reals_{\geq 0}. \end{equation} 
Observe that $\omega \equiv 0$ whenever $h$ is a unimodular metric. 
The proof of the following statement for closed manifolds is laid out in \cite[Proposition 5.3]{Friedlander:Rel} and carries over to general compact manifolds without further modification: 
\begin{proposition}\label{MSINV3}
In the above situation, we have 
\begin{equation} \mathcal R(E \downarrow M,g,h,\nabla_{g_0} f_0) - \mathcal R(E \downarrow M,g,h,\nabla_{g_1} f_1) = \sum_{y \in \Cr(f_1)} (-1)^{\ind (y)} \omega(y). \end{equation}
\end{proposition}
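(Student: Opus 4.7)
The plan is to reduce the statement to the closed case \cite[Proposition 5.3]{Friedlander:Rel} by observing that the subdivision hypothesis is purely local and avoids the boundary. Since the analytic $L^2$-torsion $T^{An}_{(2)}(E \downarrow M,g,h)$ depends only on the pair $(g,h)$ and not on the Morse-Smale pair, it cancels completely in the difference $\mathcal R(\mathcal D_0) - \mathcal R(\mathcal D_1)$. Thus the statement is equivalent to
\begin{equation*}
\bigl( \log T^{MS}_{(2)}(\mathcal D_1) - \log T^{MS}_{(2)}(\mathcal D_0) \bigr) + \bigl( \log T^{Met}_{(2)}(\mathcal D_0) - \log T^{Met}_{(2)}(\mathcal D_1) \bigr)= \sum_{y \in \Cr(f_1)}(-1)^{\ind(y)}\omega(y),
\end{equation*}
so only the combinatorial side needs to be analyzed.

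Next, I would use the subdivision conditions (1)--(3) to build a canonical $\Gamma$-equivariant cochain isomorphism $S^*\colon C^*_{(2)}(\widetilde{M},\nabla_{\widetilde{g_0}}\widetilde{f_0},\widetilde{E},\widetilde{h})\to C^*_{(2)}(\widetilde{M},\nabla_{\widetilde{g_1}}\widetilde{f_1},\widetilde{E},\widetilde{h})$ that implements the cellular subdivision on the underlying $\Gamma$-CW-complexes. Concretely, a generator $[O_x]\otimes v$ at $x\in \Cr(\widetilde{f_0})$ is sent to the signed sum over those $y\in\Cr(\widetilde{f_1})\cap W^-_x(\widetilde{f_0})$ of the same index, with $v\in V$ transported by the trivialization along the simply connected cell $W^-_x(\widetilde{f_0})$. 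At the level of $L^2$-cohomology, $S^*$ agrees with the identity via the de Rham integration isomorphism $\Theta^*$, because the de Rham integral over $W^-_x(\widetilde{f_0})$ decomposes as the sum of integrals over the subcells $W^-_y(\widetilde{f_1})$ contained in it.

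Applying Proposition \ref{PROP2} to $S^*$ then produces
\begin{equation*}
\log T^{MS}_{(2)}(\mathcal D_0) - \log T^{MS}_{(2)}(\mathcal D_1) = \sum_{k=0}^n (-1)^k \log \vndet(S^k) - \sum_{k=0}^n(-1)^k \log \vndet(H^k(S^k)),
\end{equation*}
and an analogous bookkeeping on the metric torsion side (which uses that $\Theta^*_0$ and $\Theta^*_1$ are intertwined by $S^*$ up to identifying harmonic forms) cancels the second sum. A direct fiberwise calculation identifies $\vndet(S^k)$ with $\prod_{\ind(y)=k}\det(\widetilde{h}(y)^{-1}\circ h(y))$: each new critical point $y$ contributes a block whose inner product is $h(y)$ in the target and whose image under $S^*$ is the parallel-transported vector measured by $\widetilde{h}(y)$. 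Summing with the alternating sign and rearranging yields the claimed formula.

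The main obstacle—and the only place where one has to verify that the closed case really carries over—is the local analysis near $\partial M$. Because the definition of subdivision forces $g_0\equiv g_1$ and $f_0\equiv f_1$ in a neighborhood of $\partial M$, no new critical points appear near the boundary and the unstable cells in a collar of $\partial M$ coincide for the two systems; hence $S^*$ is the identity there and the argument of \cite[Proposition 5.3]{Friedlander:Rel} transfers without modification. The remainder of the proof is exactly the calculation performed in the closed case, so no further work is required beyond checking this compatibility at $\partial M$.
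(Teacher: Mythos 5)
The reduction you begin with — that the analytic $L^2$-torsion cancels because both systems share the same $(g,h)$ — is correct (modulo a sign: by the definitions one finds $\mathcal R(\mathcal D_0)-\mathcal R(\mathcal D_1) = (\log T^{MS}_{(2)}(\mathcal D_1)-\log T^{MS}_{(2)}(\mathcal D_0)) + (\log T^{Met}_{(2)}(\mathcal D_1)-\log T^{Met}_{(2)}(\mathcal D_0))$, not with the $T^{Met}$ difference reversed as you wrote). The observation that the boundary plays no role, because the definition of subdivision fixes $f,g$ near $\partial M$, is also correct and is exactly what the paper's one-line justification relies on. However, there is a genuine gap in the central step.

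The map $S^*\colon C^*_{(2)}(\widetilde{M},\nabla_{\widetilde{g_0}}\widetilde{f_0},\widetilde{E},\widetilde{h})\to C^*_{(2)}(\widetilde{M},\nabla_{\widetilde{g_1}}\widetilde{f_1},\widetilde{E},\widetilde{h})$ you describe is \emph{not} an isomorphism of cochain complexes. Condition (1) of the definition of subdivision says $\Cr_p(f_0)\subseteq\Cr_p(f_1)$, and a genuine subdivision produces strictly more critical points in each dimension (subdividing a $k$-cell creates several new $k$-cells as well as lower-dimensional ones), so the two complexes have different von Neumann dimensions in each degree. Your $S^*$ is the subdivision chain map — an injective chain homotopy equivalence — but not a chain isomorphism. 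Consequently Proposition \ref{PROP2} does not apply to it: that proposition requires a chain \emph{isomorphism}, and the formula $\log T^{(2)}(C^*)-\log T^{(2)}(D^*)=\sum (-1)^k\log\vndet(f^k)-\sum(-1)^k\log\vndet(H^k(f^k))$ needs each $f^k$ to be an invertible bounded morphism so that $\vndet(f^k)$ is the usual Fuglede--Kadison determinant of an invertible operator. In your setting $S^k$ maps a smaller Hilbert module into a larger one, and $\vndet(S^k)$ as you use it is not defined. The subsequent computation $\vndet(S^k)=\prod_{\ind(y)=k}\det(\widetilde{h}(y)^{-1}\circ h(y))$ therefore does not parse.

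What is actually needed — and what I believe \cite[Proposition 5.3]{Friedlander:Rel} does — is to treat $S^*$ as a chain homotopy equivalence rather than an isomorphism: either work with the mapping cone of $S^*$ and use the $L^2$-torsion additivity for exact sequences (the mapping cone is $L^2$-acyclic, and its torsion produces the $\sum_y(-1)^{\ind(y)}\omega(y)$ contribution), or factor the subdivision into elementary collapses, each of which adds an explicit two-term acyclic subcomplex whose torsion is computed fiberwise via $\widetilde{h}(y)^{-1}h(y)$. Either way, the bookkeeping that cancels the $H^k$-contributions against the metric-torsion difference needs to be carried out at the level of chain homotopy equivalences, not isomorphisms. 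Your local argument near $\partial M$ is sound, but it must be attached to one of these more careful global frameworks.
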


\begin{corollary}[Relative Torsion under subdivision] \label{RELINVSUB} Let $\mathcal D_0 = (E \downarrow M, g_0,h_0,\nabla_{g'_0}f_0)$ be a weakly admissible system with $M$ odd-dimensional and let $(f_1,g'_1)$ be a subdivision of $(f_0,g'_0)$. Then, one finds a Riemannian metric $g_1$ on $M$ and an Hermitian form $h_1$ with $g_1 \equiv g_0$ and $h_1 \equiv h_0$ near $\partial M$ on $E$, so that $\mathcal D_1 = (E \downarrow M, g_1,h_1, \nabla_{g'_1} f_1)$ is a weakly admissible system, satisfying
\begin{equation} \mathcal R(\mathcal D_0) = \mathcal R(\mathcal D_1). \end{equation} \end{corollary}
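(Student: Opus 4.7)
The plan is to construct $g_1$ and $h_1$ so that each critical point of $f_1$ sees exactly the parallel-transported Hermitian metric $\tilde h_0(y)$ prescribed by the subdivision, and then to apply Propositions \ref{MSINV3} and \ref{METANOREAL} in sequence so that their anomaly sums cancel termwise.

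First I would build $g_1$ by a partition of unity: set $g_1 = g_0$ on a small product-collar of $\partial M$ (matching the boundary data demanded by Proposition \ref{METANOREAL} and preserving the product form), set $g_1 = g_1'$ on disjoint open neighbourhoods of each new critical point $y \in \Cr(f_1) \setminus \Cr(f_0)$ (which lies in the interior of $M$ because $(f_1,g_1')$ is type \rom{2}), and set $g_1 = g_0 = g_0' = g_1'$ on a neighbourhood of $\Cr(f_0)$, where the middle equalities follow from weak admissibility of $\mathcal D_0$ and from condition (4) of the subdivision; the interpolation elsewhere does not affect any required local condition. To construct $h_1$, take $h_1 = h_0$ on a neighbourhood of $\partial M$ and of $\Cr(f_0)$ (in the latter $h_0$ is already parallel by weak admissibility). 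For each new critical point $y \in \Cr(f_1) \setminus \Cr(f_0)$, the unique $x \in \Cr(f_0)$ with $y \in W^-_x(f_0)$ determines $\tilde h_0(y)$ via parallel transport along a curve in the simply-connected cell $W^-_x(f_0)$, and on a small simply-connected neighbourhood $U_y$ of $y$ I define $h_1$ as the flat parallel extension of $\tilde h_0(y)$ along $\nabla^*$; Hermiticity is preserved because complex conjugation on $\End(E,\overline{E^*})$ is parallel for $\nabla^*$, and positive definiteness holds after possibly shrinking $U_y$. Gluing by a partition of unity keeps $h_1$ positive-definite by convexity. By design $h_1(y) = \tilde h_0(y)$ for every $y \in \Cr(f_1)$ (trivially when $y \in \Cr(f_0)$), $h_1$ is parallel near $\Cr(f_1)$, and both $g_1, h_1$ are of product form near $\partial M$, so $\mathcal D_1$ is weakly admissible.

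Next I would apply Proposition \ref{MSINV3} with fixed metrics $(g_0,h_0)$ and Morse-Smale pairs $(f_0,g_0')$, $(f_1,g_1')$ to get
\begin{equation*}
\mathcal R(E \downarrow M, g_0, h_0, \nabla_{g_0'}f_0) - \mathcal R(E \downarrow M, g_0, h_0, \nabla_{g_1'}f_1) = \sum_{y \in \Cr(f_1)} (-1)^{\ind(y)} \log\det\!\bigl(\tilde h_0(y)^{-1} h_0(y)\bigr),
\end{equation*}
followed by Proposition \ref{METANOREAL} (case (1), valid since $g_0 \equiv g_1$ is product-form near $\partial M$ and $h_0|_{\partial M} = h_1|_{\partial M}$) with fixed Morse-Smale pair $(f_1,g_1')$ to obtain
\begin{equation*}
\mathcal R(E \downarrow M, g_0, h_0, \nabla_{g_1'}f_1) - \mathcal R(\mathcal D_1) = \sum_{y \in \Cr(f_1)} (-1)^{\ind(y)} \log\det\!\bigl(h_0(y)^{-1} h_1(y)\bigr).
\end{equation*}
Adding these two identities and using multiplicativity of the determinant collapses the combined anomaly to $\sum_y (-1)^{\ind(y)} \log\det\!\bigl(\tilde h_0(y)^{-1} h_1(y)\bigr)$, which vanishes termwise by the construction $h_1(y) = \tilde h_0(y)$. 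This yields $\mathcal R(\mathcal D_0) = \mathcal R(\mathcal D_1)$.

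The main technical obstacle is the coherent construction of $h_1$: one needs the flat parallel extension of $\tilde h_0(y)$ on a neighbourhood of each new critical point to actually be a Hermitian positive-definite metric, and the globalization by partition of unity to remain compatible with every local prescription near $\Cr(f_1) \cup \partial M$ and with the boundary hypotheses of Proposition \ref{METANOREAL}. These are essentially bookkeeping: Hermiticity is an $\mathbb R$-linear pointwise condition preserved by the real-linear connection $\nabla^*$, positive definiteness is open, and Hermitian metrics form a convex subset of $\End(E,\overline{E^*})$.
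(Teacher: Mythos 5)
Your proposal is correct and follows essentially the same approach as the paper: you construct $g_1$ and $h_1$ so that $h_1(y)$ agrees with the parallel-transported metric $\widetilde{h}_0(y)$ at each $y\in\Cr(f_1)$, keeping $g_1 \equiv g_0$ and $h_1 \equiv h_0$ near $\partial M$, and then apply Proposition \ref{MSINV3} followed by Proposition \ref{METANOREAL} and cancel the two alternating sums. The paper packages the two sums through the single quantity $\omega(y)$ rather than combining determinants by multiplicativity at the end, but that is a cosmetic difference, and your added remarks on Hermiticity and positive definiteness of the partition-of-unity glued metric are standard and consistent with what the paper leaves implicit.
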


\begin{proof} For each $y \in \Cr(f_1)$, there exists by the definition of a subdivision a unique $x \in \Cr(f_0)$, such that $y \in W^-_x(f_0)$. As above, we let $\widetilde{h_1}(y) \in \GL(E_y,\overline{E_y}^*)$ be the Hermitian metric on the fiber $E_y$ obtained by parallel transport of the Hermitian metric $h_0(x) \in \GL(E_x,\overline{E_x^*})$ along a curve between $x$ and $y$ contained entirely within $W^-_x(f)$. With \begin{equation*} \omega(y) \coloneqq \log \det (\widetilde{h}_1(y)^{-1} \circ h_0(y)), \end{equation*} we obtain from Proposition \ref{MSINV3} \begin{equation}\label{morseinv2} \mathcal R(E \downarrow M, g_0,h_0,\nabla_{g_0'}f_0) = \mathcal R(E \downarrow M, g_0,h_0,\nabla_{g_1'}f_1) + \sum_{y \in \Cr(f_1)} (-1)^{\ind(y)} \omega(y). \end{equation} In order to construct the metric $h_1$, choose small disjoint open coordinate neighborhoods $U_y  \supset V_y \ni y$ around each $y \in \Cr(f_1)$, each also disjoint from from a neighborhood of the boundary, such that $\overline{V_y} \subset U_y$. Define the Hermitian form $h_1 \in \GL(E,\overline{E^*})$ to be an extension of the metrics $\bigcup_{y \in \Cr(f_1)} \widetilde{h}_1(y)$ that is parallel on $\bigcup_{y \in \Cr(f_1)} V_y$ and equal to $h_0$ on $M \setminus \bigcup_{y \in \Cr(f_1) \setminus \Cr(f_0)} U_y$. Lastly, choose a Riemannian metric satisfying $g_1 \equiv g_1'$ near $\Cr(f_1)$ and $g_1 \equiv g_0$ near $\partial M$ (in particular, $g_1$ is also of product form near $\partial M$).
By construction of the metrics $h_1$ and $g_1$, the system $\mathcal D_1 = (E \downarrow M, g_1,h_1, \nabla_{g'_1} f_1)$ is weakly admissible. Moreover, an application of Proposition \ref{METANOREAL} gives
\begin{equation}\label{aninv}  \mathcal R(E \downarrow M,g_0,h_0,\nabla_{g_1'} f_1) =  \mathcal R(E \downarrow M,g_1,h_1.\nabla_{g_1'}f_1) +  \sum_{y \in \Cr(f_1)} (-1)^{\ind(y)+1} \omega(y). \end{equation} 
The result now follows from \ref{morseinv2} and \ref{aninv}. 
\end{proof}

The proof of the last result of this section can be found in \cite[Proposition 3.7]{Friedlander:Bd}
\begin{proposition}[Determinant Class under Glueing]\label{glueglue} For $i =1,2$, let $(E_i \downarrow M_i)$ be two flat, complex bundles over a compact manifold, satisfying $E_1|_{\partial M_1} \downarrow \partial M_1 = E_2|_{\partial M_2} \downarrow \partial M_2$. Assume that both $E_i \downarrow M_i$ and $E_i|_{\partial M_i} \downarrow\partial M_i$ are of determinant class. Then, the flat bundle $E \downarrow M$ with $E \coloneqq E_1 \cup_{\partial E_1} E_2$ and $M \coloneqq M_1 \cup_{\partial M_1} M_2$ is of determinant class as well.  \end{proposition}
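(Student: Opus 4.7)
The plan is to reduce the determinant-class property of $(M, E)$ to that of the pieces $(M_i, E_i)$ and $(\partial M_1, E_1|_{\partial M_1})$ by means of a Mayer-Vietoris short exact sequence of finite-type Hilbert $\vnN(\pi_1(M))$-cochain complexes, and then to invoke the two-out-of-three property of the determinant-class condition for such sequences.

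First I would pick compatible triangulations of $M_1$ and $M_2$ that restrict to a common triangulation of the identified boundary $\partial M_1 = \partial M_2$, turning $M = M_1 \cup_{\partial M_1} M_2$ into a CW-complex in which $M_1$, $M_2$, and $\partial M_1$ appear as subcomplexes. Lifting to the universal cover $p \colon \widetilde M \to M$ and forming twisted cellular cochains with values in the (pulled-back) flat bundle produces finite-type Hilbert $\vnN(\pi_1(M))$-cochain complexes $C^* \coloneqq C^*_{(2)}(\widetilde M, \widetilde E)$, $C^*_i \coloneqq C^*_{(2)}(p^{-1}(M_i), \widetilde E|_{p^{-1}(M_i)})$, and $C^*_\partial \coloneqq C^*_{(2)}(p^{-1}(\partial M_1), \widetilde E|_{p^{-1}(\partial M_1)})$, fitting into the standard short exact Mayer-Vietoris sequence
\begin{equation*}
0 \to C^* \to C^*_1 \oplus C^*_2 \to C^*_\partial \to 0.
\end{equation*}

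Second, I would transfer the given determinant-class hypotheses, which are formulated over $\pi_1(M_i)$ and $\pi_1(\partial M_1)$, to the $\vnN(\pi_1(M))$-setting needed above. Each connected component of $p^{-1}(M_i)$ is a covering of $M_i$, and after choosing coset representatives the disjoint union is $\pi_1(M)$-equivariantly isomorphic to the induction of the corresponding universal-cover cochain complex along $\pi_1(M_i) \to \pi_1(M)$; similarly for $\partial M_1$. Since induction along group homomorphisms preserves Fredholmness and the determinant-class condition (see e.g.\ \cite[Section 3.6]{Lueck:book}), the middle and right-hand terms of the Mayer-Vietoris sequence are of determinant class.

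The main obstacle is then the two-out-of-three principle: in a short exact sequence $0 \to C^* \to D^* \to E^* \to 0$ of finite-type Hilbert $\vnN(\Gamma)$-cochain complexes, if two of the three terms are of determinant class, so is the third. This follows by carefully tracking Fuglede-Kadison determinants along the associated long exact sequence in $L^2$-cohomology and exploiting multiplicativity of $\vndet$, in the same spirit as Propositions~\ref{PROP1} and~\ref{PROP2}; compare \cite[Theorem 3.35]{Lueck:book}. Applied to our Mayer-Vietoris sequence with the middle and right-hand terms already known to be of determinant class, this forces $C^*$ to be of determinant class, which is precisely the claim that $E \downarrow M$ is of determinant class.
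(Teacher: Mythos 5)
Your Mayer--Vietoris strategy is the natural one and is likely close in spirit to the argument in the cited reference \cite[Proposition 3.7]{Friedlander:Bd}; the paper itself provides no proof, only that citation, so there is no in-text argument to compare against. The first two steps of your plan --- the short exact sequence of cellular $L^2$-cochain complexes and the transfer of hypotheses to $\vnN(\pi_1(M))$ via induction --- are reasonable, though you should handle with care the case where $\pi_1(M_i)\to\pi_1(M)$ or $\pi_1(\partial M_1)\to\pi_1(M)$ fails to be injective, since then the preimage $p^{-1}(M_i)$ is a proper quotient of a disjoint union of copies of $\widetilde{M_i}$ and the relevant comparison is not a pure induction along an inclusion of groups.

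The serious gap is in the third step: the ``two-out-of-three'' principle for the determinant-class condition is \emph{false} in the form you invoke. What \cite[Theorem 3.35]{Lueck:book} actually gives requires, as an additional hypothesis, that the long weakly exact homology sequence associated to the short exact sequence is itself of determinant class; without that, knowledge of two of the three complexes does not force the third. In particular, the direction you need (``middle and right of determinant class $\Rightarrow$ left of determinant class'') fails in general. For a concrete counterexample, take $\Gamma=\mathbb Z$, $A=L^2(S^1)$ with the regular $\Gamma$-action, and a bounded, injective, $\Gamma$-equivariant operator $\alpha\colon A\to A$ which is \emph{not} of determinant class, e.g.\ multiplication by a nonvanishing $L^\infty$ function $f$ with $\int_{S^1}\log|f|\,d\theta=-\infty$. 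Define finite Hilbert $\vnN(\Gamma)$-cochain complexes concentrated in degrees $0,1$ by $C^*\colon A\xrightarrow{\ \alpha\ }A$, $D^*\colon A\oplus A\xrightarrow{(a,a')\mapsto \alpha a + a'} A$, and $E^*\colon A\to 0$. The inclusion $a\mapsto(a,0)$ and projection $(a,a')\mapsto a'$ give a degreewise-exact short exact sequence $0\to C^*\to D^*\to E^*\to 0$. Here $E^*$ is trivially of determinant class, and $D^*$ is of determinant class because $d_D$ restricted to $\ker(d_D)^\perp=\{(\alpha^*b,b):b\in A\}$ acts as $b\mapsto(1+\alpha\alpha^*)b$, which is bounded below; yet $C^*$ is not of determinant class by the choice of $\alpha$. (The failure is exactly detected by the connecting morphism $H^0(E^*)\to H^1(C^*)$ in the long weakly exact sequence, which is essentially $\alpha$ itself.) So, as written, your argument does not close. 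To repair it you would need to establish that, in the specific Mayer--Vietoris situation, the long weakly exact $L^2$-cohomology sequence is of determinant class --- or replace the abstract two-out-of-three by a direct analysis of the spectral density functions of the connecting morphisms, which is the sort of thing the cited proof must be doing.
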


\section{Witten deformation and asymptotic expansions}
This section collects the main technical results achieved by Burghelea et al.\ in \cite{Friedlander:Bd} that are detrimental for the proof of Theorem \ref{MAINTHEOREM}. Since the methods employed by the authors carry over seamlessly from the unitary case to the general case of flat bundles, the proofs won't be included here.
\subsection{Witten deformation}
Throughout this section, we fix a countable group $\Gamma$ and a $\Gamma$-invariant Morse-Smale system $\mathcal D = (E \downarrow M,g,h,\nabla_{g'}f)$. For any parameter $t \in \reals_{\geq 0}$, the {\itshape Witten-deformation} $d_t$ of the exterior derivative $d$ on $\Omega^*(M,E)$ is defined as
\begin{equation} d_t \coloneqq e^{-t f}  d e^{t f}   = d + tdf \wedge \colon \Omega^*(M,E) \to \Omega^{*+1}(M,E). \end{equation}
Observe that $d_t^2 = 0$ for any $t \in \reals_{\geq 0}$, which is why we can regard the pair $\Omega^*_t(M,E) \coloneqq (\Omega^*(M,E),d_t)$ as a cochain complex. 
In analogy with the case $t = 0$, let $\delta_t \colon \Omega^*(M,E,g,h) \to \Omega^{*-1}(M,E,g,h)$ be the formal adjoint of $d_t$ with respect to the inner product \ref{IP} on $\Omega^*_c(M,E)$ induced by $g$ and $h$ and define \begin{align} & \Delta_{*,t} \coloneqq \delta^{*+1}_t d^*_t + d^{*-1}_t \delta^{*}_t: \dom(\Delta_{*,t}) \to \dom(\Delta_{*,t}), \\& \dom(\Delta_{*,t}) \coloneqq \{ \sigma \in \Omega^*_c(M,E) \colon \vec{n}\sigma = \vec{n}d_t^*\sigma = 0 \}. \end{align}
Observe that for any $t  \geq 0$, $d_t$ and $\delta_t$ are all elliptic differential operators of order $1$, while $\Delta_{*,t}$ is an elliptic differential operator of order $2$ that is symmetric (on its domain) with respect to the inner product on $\Omega^*_t(M,E)$ induced by $g$ and $h$. Moreover, just as in the case $t = 0$, one verifies that all three operators are closable when regarded as unbounded operators on the $L^2$-completion $\Omega^*_{(2)}(M,E)$. The closed, symmetric operator $\Delta_{*,t} \colon \Omega_{(2),t}^*(M,E) \to \Omega_{(2),t}^*(M,E)$ is called the {\itshape Witten-Laplacian} (with absolute boundary conditions) associated to the system $\mathcal D = (E \downarrow M,g,h,\nabla_{g'}' f)$. 
We define the {\itshape $L^2$-Witten-de Rham complex} of the system $\mathcal D$ as \begin{equation} \Omega^*_{(2),t}(M,E) \coloneqq (\Omega^*_{(2)}(M,E),d_t), \end{equation} where we identify $d_t$ with its minimal $L^2$-closure. 
All complexes obtained this way have the same isomorphism type. Namely, one easily sees that for each $t > 0$, multiplying a form $\omega$ with the function $e^{tf}$ determines an isomorphism of Hilbert $\vnN(\Gamma)$-cochain complexes \begin{align}& e^{tf} \colon \Omega^*_{(2),t}(M,E) \to \Omega_{(2)}^*(M,E) \label{driso}. \end{align} Furthermore, since $\mathcal D$ is a $\Gamma$-invariant system, it follows that $\Delta_{*,t}$ is the lift of an elliptic operator defined over a bundle on a compact manifold. With this in mind, one verifies as in the case $t = 0$ that $\Delta_{*,t}$ is in fact self-adjoint (with the imposed absolute boundary conditions).
In particular, for each $t \geq 0$, we can define the spectral projections \begin{equation} P^*(t) \coloneqq \chi_{[0,1)}(\Delta_{*,t}) \colon \Omega_{(2),t}^*(M,E) \to \Omega_{(2),t}^*(M,E), \end{equation} of $\Delta_{*,t}$ associated wit the half-open interval $[0,1)$, as well as the {\itshape small} and {\itshape large} subcomplexes \begin{align}& \Omega^*_{Sm,t}(M,E) \coloneqq \left(\bigoplus_{k=0}^n \im(P^k(t)),d_t\right), \\&  \Omega^*_{La,t}(M,E) \coloneqq \left(\bigoplus_{k=0}^n \im (\unit_{\Omega^k_{(2)}(M,E)} - P^k(t)),d_t\right). \end{align}
Because $\Delta_{*,t}$ commutes with its spectral projections, one verifies inductively that $\im(P^*(t)) \subseteq \dom(\Delta_{*,t}^l)$ for each $l \in \mathbb N_0$. Together with the ellipticity of $\Delta_{*,t}$, we deduce \begin{equation} \label{INCL}\Omega^*_{Sm,t}(M,E) \subseteq \bigcap_{l=0}^\infty \mathcal W_{l}^*(M,E). \end{equation} In particular, the complex $\Omega^*_{Sm,t}(M,E)$ consists entirely of smooth forms. Moreover, observe that we have an orthogonal decomposition of Hilbert $\vnN(\Gamma)$-cochain complexes \begin{equation} \Omega^*_{(2),t}(M,E) = \Omega^*_{Sm,t}(M,E) \oplus \Omega^*_{La,t}(M,E). \end{equation} 
Finally, just as in the case $\partial \widetilde{M} = \emptyset$, one verifies: 
\begin{proposition}\cite[Theorem 4.2]{Shubin:coeff}\label{shubinch} For each $t \geq 0$, the projection $P^*(t) \colon \Omega^*_{(2),t}(M,E) \to \Omega^*_{Sm,t}(M,E)$ onto the small subcomplex is a chain homotopy equivalence of Hilbert $\vnN(\Gamma)$-cochain complexes (with chain homotopy inverse given by the inclusion). \end{proposition}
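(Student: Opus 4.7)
The plan is to exhibit an explicit chain homotopy between $i^* \circ P^*(t)$ and the identity on $\Omega^*_{(2),t}(M,E)$, using that $\Delta_{*,t}$ is boundedly invertible on the large subcomplex. Once $P^*(t)$ is shown to be a chain map, the equality $P^* \circ i^* = \unit_{\Omega^*_{Sm,t}(M,E)}$ is immediate from the very definition of the small subcomplex as the image of $P^*(t)$, so only the other composition requires genuine work.

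First I would verify that $P^*(t)$ intertwines $d_t$ with itself. A direct check using $d_t^2 = 0$ and $\Delta_{k,t} = d_t \delta_t + \delta_t d_t$ gives $d_t \Delta_{k,t} = \Delta_{k+1,t} d_t$ on the natural domain; the standard spectral-theoretic argument for a closed operator intertwining two self-adjoint operators then yields $\chi_{[0,1)}(\Delta_{k+1,t}) \, d_t = d_t \, \chi_{[0,1)}(\Delta_{k,t})$, i.e.\ $P^{k+1}(t) d_t = d_t P^k(t)$. Consequently $P^*(t)$ and $i^*$ are bounded $\Gamma$-equivariant morphisms of Hilbert $\vnN(\Gamma)$-cochain complexes, and the orthogonal decomposition $\Omega^*_{(2),t}(M,E) = \Omega^*_{Sm,t}(M,E) \oplus \Omega^*_{La,t}(M,E)$ is one of subcomplexes.

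The nontrivial step is constructing a bounded chain homotopy $K^* \colon \Omega^*_{(2),t}(M,E) \to \Omega^{*-1}_{(2),t}(M,E)$ satisfying
\begin{equation*}
\unit_{\Omega^*_{(2),t}(M,E)} - i^* \circ P^*(t) = K^{*+1} d_t + d_t K^*.
\end{equation*}
The key observation is that on the large subcomplex $\Omega^*_{La,t}(M,E) = \im(\unit - P^*(t))$, the self-adjoint operator $\Delta_{*,t}$ has spectrum contained in $[1,\infty)$ and hence admits a bounded inverse $\Delta_{*,t}^{-1}$. I would set
\begin{equation*}
K^* := \delta_t \circ \Delta_{*,t}^{-1} \circ (\unit - P^*(t)),
\end{equation*}
which vanishes on $\Omega^*_{Sm,t}(M,E)$ and acts as $\delta_t \Delta_{*,t}^{-1}$ on $\Omega^*_{La,t}(M,E)$. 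Boundedness follows from the standard estimates $\|\Delta_{*,t}^{-1/2}\| \leq 1$ and $\|\delta_t \Delta_{*,t}^{-1/2}\| \leq 1$ on the large subcomplex, the latter being the pointwise identity $\|\delta_t \alpha\|^2 \leq \langle \alpha, \Delta_{*,t} \alpha \rangle$ applied to $\alpha = \Delta_{*,t}^{-1/2} \omega$. The required homotopy identity then reduces to a short computation: on $\Omega^*_{La,t}(M,E)$, using that $\Delta_{*,t}^{-1}$ commutes with both $d_t$ and $\delta_t$ by functional calculus,
\begin{equation*}
K^{*+1} d_t + d_t K^* \;=\; (\delta_t d_t + d_t \delta_t)\, \Delta_{*,t}^{-1} \;=\; \Delta_{*,t}\, \Delta_{*,t}^{-1} \;=\; \unit,
\end{equation*}
which matches $\unit - i^* \circ P^*(t) = \unit$ there, while both sides vanish on $\Omega^*_{Sm,t}(M,E)$.

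The main obstacle I anticipate is the rigorous handling of the unbounded operators $d_t$ and $\delta_t$ together with the absolute boundary conditions: one must verify that $K^*$ sends $\dom(d_t)$ into $\dom(d_t)$, that each of the formal identities above is valid on the appropriate dense domain and not only as a symbolic calculation, and that all intermediate expressions respect the prescribed boundary conditions. This is resolved by elliptic regularity for $\Delta_{*,t}$ with absolute boundary conditions, which forces the image of $\Delta_{*,t}^{-1}$ restricted to the large subcomplex to lie in $\bigcap_{l=0}^\infty \sob_l^*(M,E)$, exactly as used to establish the inclusion \ref{INCL}. This smoothness allows $d_t$ and $\delta_t$ to be applied freely in the calculation and guarantees that $K^*$ is in fact a morphism of Hilbert $\vnN(\Gamma)$-cochain complexes, completing the argument.
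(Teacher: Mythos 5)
The paper offers no proof of this proposition --- it cites Shubin's Theorem 4.2 and says ``just as in the case $\partial\widetilde{M}=\emptyset$, one verifies'' --- so there is no internal argument to measure you against. Your proof via the Green's-operator chain homotopy $K^* = \delta_t\,\Delta_{*,t}^{-1}(\unit - P^*(t))$ is correct and is the standard route: the intertwining $d_t\Delta_{k,t}=\Delta_{k+1,t}d_t$ yields commutation of $d_t$, $\delta_t$ with the spectral calculus, the bound $\|\delta_t\alpha\|^2\le\langle\alpha,\Delta_{*,t}\alpha\rangle$ gives boundedness of $K^*$, and elliptic regularity for $\Delta_{*,t}$ with absolute boundary conditions handles the domain issues you flag. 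This is essentially the argument the cited reference supplies in the boundaryless case, adapted verbatim to absolute boundary conditions, exactly as the paper intends.
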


Now assume additionally that the $\Gamma$-invariant system $\mathcal D =(E \downarrow M,g,h,\nabla_{g'} f)$ is also weakly $\Gamma$-admissible. Recall from the axioms laid out in Definition \ref{ADMISSIBLESYSTEM} that a $\Gamma$-invariant system is weakly $\Gamma$-admissible if certain local conditions near $\Cr(f)$ are satisfied: We can choose for each $p \in \Cr(f)$ radii $r_p > 0$, coordinate charts $\phi_p : B_{r_p}(0) \stackrel{\cong}{\to} U_p \subseteq \reals^n$ disjoint from $\partial M$ with $B_{r_p}(0) \coloneqq \{x \in \reals^n : ||x|| < r_p \}$ and $\phi_p(0) = p$, along with a flat bundle isomorphism $\Phi_p: B_{r_p}(0) \times \ceals^m \stackrel{\cong}{\to}  E|_{U_p}$ that fit into the commutative diagram \begin{equation} \begin{tikzcd}B_{r_p}(0) \times \ceals^m  \ar{r}{\Phi_p}[swap]{\cong} \ar{d}{pr_1} &  E|_{U_p} \ar{d}{\pi_E} \\ B_{r_p}(0) \ar{r}{\phi_p}[swap]{\cong} & U_p , \end{tikzcd} \end{equation} and such that all of the following conditions hold:
 \begin{enumerate}[label=(\subscript{H}{{\arabic*}})]
\item The pullback metric $\phi_p^*(g|_{U_p})$ equals the Euclidean metric on $\reals^n$. 
\item The pullback Hermitian form $\Phi_p^*(h|_{U_p})$ equals the standard inner product on $\ceals^m$. 
\item One has \begin{equation*} (f \circ \phi_p)(x_1,\dots,x_n) = f(p) - \frac{1}{2}\sum_{i=1}^{\ind(p)} x_i^2 + \frac{1}{2} \sum_{i=\ind(p)+1}^{n} x_i^2. \end{equation*}
\item The above choices are $\Gamma$-invariant, i.e.\ $\gamma.U_p = U_{\gamma.p}$, $r_{p} = r_{\gamma.p}$, $\gamma \circ \phi_p = \phi_{\gamma.p}$ and $\gamma \circ \Phi_p = \Phi_{\gamma.p}$ for each $p \in \Cr(f)$ and each $\gamma \in \Gamma$. 
\end{enumerate}
It is precisely due to this $\Gamma$-invariant shape of $f$ and metric bundle $(E,h) \downarrow (M,g)$ near $\Cr(f)$ that Burghelea et al.\ were able to prove the next theorem. With the aid of properties $(H_1)-(H_4)$, their proof from \cite[Section 3.3]{Friedlander:Bd} can be adapted, word by word, to our situation of non-unitary bundles without any further modification:
\begin{theorem} Let $(E \downarrow M,g,h,\nabla_{g'} f)$ be a weakly $\Gamma$-admissible system. Then, for each $t \geq 0$, there exists an isometric embedding of Hilbert $\vnN(\Gamma)$-modules 
\begin{equation*} J^*(t) \coloneqq \bigoplus_{k = 0}^n J^k(t): C_{(2)}^{\bu}(M,\nabla_{g'}f,E,h) \to \Omega_{(2)}^{\bu}(M,E), \end{equation*} Moreover, for large $t >> 0$, the composition \begin{equation*} Q(t) \coloneqq P^*(t) \circ J^*(t):  C_{(2)}^{\bu}(M,\nabla_{g'}f,E,h) \to \Omega_{Sm,t}^{\bu}(M,E) \end{equation*}
is an isomorphism of Hilbert $\vnN(\Gamma)$-modules. \end{theorem}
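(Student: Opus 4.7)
The plan is to construct $J^*(t)$ from $L^2$-normalized local Gaussian models concentrated at the critical points of $f$. For $p \in \Cr(f)$ of index $k$, hypotheses $(H_1)$--$(H_3)$ identify the local data $(g, h, f, E)$ on $U_p$ with the standard flat Euclidean model, in which the Witten Laplacian $\Delta_{*,t}$ on $k$-forms with values in a parallel fiber direction admits the explicit $L^2$-unit ground state
\begin{equation*}
\eta_p^{\mathrm{loc}}(t)(x) := (t/\pi)^{n/4}\, e^{-t\|x\|^2/2}\, dx_1 \wedge \dots \wedge dx_k
\end{equation*}
(annihilation follows from the standard Witten harmonic-oscillator computation, where the fermion and oscillator contributions cancel on the ``canonical'' index-$k$ form). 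Fix an orthonormal basis $\{v_1,\dots,v_m\}$ of $\ceals^m$ and a $\Gamma$-invariant smooth cutoff $\chi_p$ supported in $U_p$ with $\chi_p \equiv 1$ near $p$, and set
\begin{equation*}
\omega_{p,v_i}(t) := \bigl[\Phi_p \circ \phi_p^{-1}\bigr]_* \bigl(\chi_p\,\eta_p^{\mathrm{loc}}(t) \otimes v_i\bigr) \in \Omega^k(M,E).
\end{equation*}
Defining $J^k(t)\bigl([O_p]\otimes v_i\bigr) := \omega_{p,v_i}(t) / \|\omega_{p,v_i}(t)\|$ on generators, extending $\ceals$-linearly, and making the construction $\Gamma$-equivariant via $(H_4)$ produces (after $L^2$-completion) a bounded morphism of Hilbert $\vnN(\Gamma)$-modules.

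To verify that $J^*(t)$ is isometric, I would use three facts: $\Gamma$-orbits of the supports of the $\omega_{p,v_i}(t)$ attached to different critical $\Gamma$-orbits are disjoint, so the corresponding forms are orthogonal; within a single $\Gamma$-orbit, $(H_2)$ reduces the fiberwise inner product to the standard one on $\ceals^m$, under which the $v_i$ are orthonormal; and the explicit renormalization by $\|\omega_{p,v_i}(t)\|$ sends each basis vector to a unit vector. Together these give $J^*(t)$ as a $\Gamma$-equivariant isometric embedding for every $t \geq 0$ (large enough that the Gaussian is $L^2$, which is automatic).

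The isomorphism statement then rests on the Witten deformation estimate
\begin{equation*}
\|\Delta_{*,t}\,\omega_{p,v_i}(t)\| = O(e^{-ct}), \qquad t \to \infty,
\end{equation*}
which holds because $\eta_p^{\mathrm{loc}}(t) \otimes v_i \in \ker \Delta_{*,t}$ in the local standard model, so $\Delta_{*,t}\omega_{p,v_i}(t)$ is supported on the annulus $\{d\chi_p \neq 0\}$, on which the Gaussian $e^{-t\|x\|^2/2}$ is bounded by $e^{-ct}$ for some $c > 0$. Applying the spectral theorem to the self-adjoint operator $\Delta_{k,t}$ gives $\|(\unit - P^k(t))\sigma\| \leq \|\Delta_{k,t}\sigma\|$ on $\dom(\Delta_{k,t})$, whence $\|(\unit - P^k(t)) J^k(t)\| = O(e^{-ct})$. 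Consequently $Q^k(t) = P^k(t) \circ J^k(t)$ is an $\epsilon$-isometry for $t \gg 0$, hence injective with closed image.

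The main obstacle is surjectivity of $Q^k(t)$, which amounts to the equality of von Neumann dimensions
\begin{equation*}
\vndim \Omega^k_{Sm,t}(\Wt M, \Wt E) \;=\; \vndim C^k_{(2)}\bigl(\Wt M, \nabla_{\Wt{g}'} \Wt f, \Wt E, \Wt h\bigr)
\end{equation*}
for $t \gg 0$. This is established by an IMS-localization argument: compare $\Delta_{k,t}$ with the orthogonal direct sum of its Dirichlet restrictions to the $U_p$ and to $M \setminus \bigcup_p U_p$, showing that the latter operator has spectrum uniformly above $1$ for $t$ large (since $|df| \geq \delta > 0$ outside the $U_p$, the Witten Laplacian is bounded below by roughly $t^2\delta^2$ there). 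The dimension count then reduces to the harmonic-oscillator model on each $U_p$, where $(H_1)$--$(H_3)$ identify the small-eigenvalue subspace in degree $k$ with $\ceals \cdot \eta_p^{\mathrm{loc}}(t) \otimes \ceals^m$, contributing $\dim_\ceals V$ per $\Gamma$-orbit of index-$k$ critical points — exactly $\vndim C^k_{(2)}$. Since the localization argument in \cite[Section~3.3]{Friedlander:Bd} uses only local triviality of $h$ near $\Cr(f)$ (ensured by $(H_2)$) and not global unitarity, the proof transports verbatim to our non-unitary setting, and combining with the near-isometry of $Q^k(t)$ yields the isomorphism.
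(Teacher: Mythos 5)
Your proposal is correct and follows exactly the route the paper intends: the paper defers the proof to \cite[Section 3.3]{Friedlander:Bd}, asserting that properties $(H_1)$--$(H_4)$ allow it to carry over verbatim to non-unitary bundles, and your reconstruction (local Gaussian ground states using $(H_1)$--$(H_3)$, $\Gamma$-equivariance from $(H_4)$, the exponential-decay estimate for $\Delta_{*,t}$ applied to the cutoff Gaussian, the spectral bound $\|(\unit - P^k(t))\sigma\| \leq \|\Delta_{k,t}\sigma\|$, and IMS localization for the dimension count) is precisely the content of that section. The only cosmetic point: the prefactor $(t/\pi)^{n/4}$ vanishes at $t=0$ and should simply be absorbed into the division by $\|\omega_{p,v_i}(t)\|$, which makes $J^*(t)$ well-defined for all $t \geq 0$ as the statement requires.
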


We stress the fact that the map of Hilbert $\vnN(\Gamma)$-modules $J^*(t)$ from the previous theorem (and therefore also the isomorphism $Q^*(t)$) is in general {\bfseries not} a map of {\bfseries cochain complexes}. This is why the maps $Q^*(t)$ alone cannot be used to reach our desired conclusion, namely that the complexes $C_{(2)}^{\bu}(M,\nabla_{g'}f,E,h)$ and $\Omega_{Sm,t}^{\bu}(M,E,g,h)$ are chain homotopy equivalent. 
In spite of this, it still follows that for sufficiently large $t >> 0$, the isomorphism $Q^*(t)$ can be used to define the isometry \begin{equation}\label{ISOMETRY} I^*(t) \coloneqq Q^*(t) \left( Q^*(t)^* Q^*(t) \right)^{-1/2}:  C_{(2)}^* (M,\nabla_{g'}f,E,h) \to \Omega_{Sm,t}^*(M,E,g,h). \end{equation}
Moreover, since $(E \downarrow M,g,h,\nabla_{g'} f)$ is the lift of an admissible system with deck group $\Gamma$, there are also isomorphisms of Hilbert $\vnN(\Gamma)$-modules for $t > 0$: \begin{align}\label{SCALAR}& S^*(t): C^*_{(2)}(M,\nabla_{g'}f,E,h) \to C^*_{(2)}(M,\nabla_{g'}f,E,h),
\\& \lambda_p \otimes [p] \mapsto  \exp \left( \frac{n-2\ind(p)}{4}\log(\pi/t)-tf(p) \right) \cdot \lambda_p \otimes [p],\hspace{.5cm} p \in \Cr(f). \end{align}
Here, we have used the fact that $f$ is $\Gamma$-invariant, hence in particular satisfies $f(\gamma.x) = f(x)$ for any $x \in M$. \\ Recall that because of Equation \ref{INCL}, we have the inclusion $\Omega^*_{Sm,t}(M,E) \subseteq \bigcap_{l \in \mathbb N} \mathcal W^*_l(M,E)$. This allows us to define the morphism of Hilbert $\vnN(\Gamma)$-cochain complexes  \begin{equation} F^*(t) \coloneqq \Int^* \circ e^{tf}: \Omega_{Sm,t}^*(M,E,g,h) \to C^*_{(2)}(M,\nabla_{g'}f,E,h), \end{equation} as restricting to the subcomplex $\Omega^*_{Sm,t}(M,E)$ the composition of the isomorphism \begin{equation*} e^{tf}: \Omega_{(2),t}^*(M,E) \to  \Omega_{(2)}^*(M,E) \end{equation*} from \ref{driso} with the integration map \begin{equation*} \Int^*: \mathcal W_{l - *}^*(M,E) \to C^*_{(2)}(M,\nabla_{g'}f,E,h), \end{equation*} defined as in \ref{DRINT}. Just as before, the proof of the next theorem, laid out for unitary bundles in \cite[Section 3.3]{Friedlander:Bd}, can be adapted to our setting without any modifications:

\begin{theorem}\label{MSSMEQ} Under the previous assumptions, we obtain for large $t >> 0$, that \begin{equation} S^*(t)  \circ F^*(t) \circ I^*(t) = \unit + \mathcal O(t^{-1}). \end{equation} 
Consequently, for large $t >> 0$, the map $F^*(t) \colon \Omega_{Sm,t}^*(M,E,g,h) \to C^*_{(2)}(M,\nabla_{g'}f,E,h) $ is an isomorphism of Hilbert $\vnN(\Gamma)$-cochain complexes. \end{theorem}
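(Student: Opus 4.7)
The plan is to follow the line of argument used in the unitary setting in Section 3.3 of \cite{Friedlander:Bd}, and to verify that the local nature of the key estimates makes them insensitive to the global flatness of $h$. The crucial observation is that by axioms $(H_1)$--$(H_4)$ of weak $\Gamma$-admissibility, one may pick around each critical point $p$ a coordinate chart in which $g$ is Euclidean, $h$ is the standard Hermitian form on $\ceals^m$, and $f$ is the standard quadratic Morse form; inside each such chart the system is genuinely $\Gamma$-equivariantly isometric (as a bundle with connection and metric) to a flat model on $\reals^n \times \ceals^m$, so in particular the restriction $h|_{U_p}$ is parallel. Because the small eigenforms of $\Delta_{*,t}$ satisfy Agmon-type exponential decay away from $\Cr(f)$ (a standard Witten-deformation estimate which does not use unitarity of $h$ globally), the asymptotics of $F^*(t)\circ I^*(t)$ reduce to a model computation at each critical point, separately.

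At the heart of the argument is the following local calculation: in the chart around $p$, the embedding $J^k(t)([O_p] \otimes v)$ is, up to a cutoff that is exponentially small in $t$, the Gaussian $k$-form
\begin{equation}
\eta_{p,v}(x) \;=\; \Bigl(\tfrac{t}{\pi}\Bigr)^{n/4}\, e^{-t|x|^2/2}\, dx_1 \wedge \cdots \wedge dx_{\ind(p)} \otimes v, \qquad k = \ind(p),
\end{equation}
which is the unique $L^2$-normalized ground state of the model Witten Laplacian. Applying $F^k(t) = \Int^k \circ e^{tf}$ and using the standard quadratic form of $f$ near $p$ reduces the integration over $W^-(p)$ to a Gaussian integral on $\reals^{\ind(p)}$, which evaluates to
\begin{equation}
F^k(t)(\eta_{p,v}) \;=\; e^{tf(p)}\,\Bigl(\tfrac{t}{\pi}\Bigr)^{\frac{n-2\ind(p)}{4}}\, [O_p]\otimes v \;=\; S^k(t)^{-1}([O_p]\otimes v).
\end{equation}
Consequently $F^*(t)\circ J^*(t) = S^*(t)^{-1} + \mathcal O(t^{-1})$, where the error collects contributions of the cutoff, the cross terms between different critical points (exponentially small in $t$), and the subleading corrections to the ground-state Gaussian.

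To pass from $J^*(t)$ to $I^*(t) = Q^*(t)(Q^*(t)^* Q^*(t))^{-1/2}$, I would argue that the deviation $\unit - Q^*(t)^*Q^*(t) = J^*(t)^*(\unit - P^*(t))J^*(t)$ measures the mass of $J^*(t)([O_p]\otimes v)$ that leaks into the large subcomplex. A standard spectral gap argument for the Witten Laplacian---its eigenvalues outside $[0,1)$ are bounded below by a positive multiple of $t$---bounds this mass by $\mathcal O(t^{-1})$, so $(Q^*(t)^* Q^*(t))^{-1/2} = \unit + \mathcal O(t^{-1})$. Composing with the preceding step yields the claimed identity $S^*(t)\circ F^*(t)\circ I^*(t) = \unit + \mathcal O(t^{-1})$.

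For the final assertion, note that $S^*(t)$ is a diagonal scalar automorphism and $I^*(t)$ is an isometric isomorphism of Hilbert $\vnN(\Gamma)$-modules, so the identity above implies that $\unit + \mathcal O(t^{-1})$ is invertible for sufficiently large $t$, whence $F^*(t) = S^*(t)^{-1} \circ (\unit + \mathcal O(t^{-1})) \circ I^*(t)^{-1}$ is an isomorphism of Hilbert $\vnN(\Gamma)$-modules; since $F^*(t)$ is a morphism of cochain complexes by construction, it is an isomorphism of Hilbert $\vnN(\Gamma)$-cochain complexes. The main difficulty I anticipate is the uniform error analysis in the $\Gamma$-equivariant setting: controlling the $\mathcal O(t^{-1})$ remainders so that they are genuinely small in operator norm on Hilbert $\vnN(\Gamma)$-modules---rather than just pointwise near a fixed critical point---relies crucially on the $\Gamma$-invariance of the local model data guaranteed by $(H_4)$, which ensures that the Agmon decay estimates and the normalization of $(Q^*(t)^* Q^*(t))^{-1/2}$ are uniform across the $\Gamma$-orbits in $\Cr(f)$.
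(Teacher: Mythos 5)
Your proposal correctly reconstructs, step by step, the argument from \cite[Section~3.3]{Friedlander:Bd} that the paper itself invokes (with the remark that conditions $(H_1)$--$(H_4)$ let it carry over verbatim to non-unitary bundles): the local Gaussian ground-state model at each critical point, the Agmon decay and spectral-gap estimates controlling the remainders, the exact match $F^k(t)(\eta_{p,v})=e^{tf(p)}(t/\pi)^{(n-2\ind(p))/4}[O_p]\otimes v=S^k(t)^{-1}([O_p]\otimes v)$, and the passage from $J^*(t)$ to $I^*(t)$ via $(Q^*(t)^*Q^*(t))^{-1/2}=\unit+\mathcal O(t^{-1})$. In particular your observations that only the local triviality of $(g,h,f)$ near $\Cr(f)$ is needed (not global unitarity of $h$) and that $(H_4)$ is what makes the estimates uniform across $\Gamma$-orbits are exactly the points the paper relies on in citing the unitary-case proof.
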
 

Combining \ref{driso} with Proposition \ref{shubinch} and Theorem \ref{MSSMEQ}, we arrive at the following very important intermediate result:

\begin{theorem}\label{ANCOMBCH} Let $\mathcal D = (M,E,g,h,\nabla_{g'}f)$ be a weakly admissible type \rom{2}-Morse Smale system with $M$ compact, let $\widetilde{M}$ be the universal cover of $M$ and let $\widetilde{\mathcal D} = (\widetilde{M},\widetilde{E},\widetilde{g},\widetilde{h},\nabla_{\widetilde{g'}}\widetilde{f})$ be the corresponding lift of $\mathcal D$. Then, there is a chain homotopy equivalence of Hilbert $\vnN(\Gamma)$-cochain complexes
\begin{equation} \begin{tikzcd} \Omega_{(2)}^*(\widetilde{M},\widetilde{E},\widetilde{g},\widetilde{h}) \ar{d}{e^{-t\widetilde{f}}}[swap]{\ref{driso}} \ar{r}{\simeq} & C^*_{(2)}(\widetilde{M},\nabla_{\widetilde{g'}}\widetilde{f},\widetilde{E},\widetilde{h}) \\ \Omega_{(2),t}^*(\widetilde{M},\widetilde{E},\widetilde{g},\widetilde{h}) \ar{r}{P^*(t)}[swap]{\ref{shubinch}} & \Omega_{Sm,t}^*(\widetilde{M},\widetilde{E},\widetilde{g},\widetilde{h}) \ar{u}{F^*(t)}[swap]{\ref{MSSMEQ}}\end{tikzcd},\end{equation} with $t>>0$ chosen sufficiently large.
In particular, we obtain:
\begin{enumerate}
\item For each $0 \leq k \leq n$, it holds that $\alpha_k^{An}(M,E) = \alpha_k^{Top}(M,E)$.
\item $(M,E)$ is of a-determinant class if and only if it is of c-determinant class.
\end{enumerate}
\end{theorem}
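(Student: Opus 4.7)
The proof plan is essentially encoded in the commutative diagram that appears in the statement itself: the strategy is simply to compose the three chain maps displayed there and verify that each is a chain homotopy equivalence, then invoke Proposition \ref{PROP1} to deduce the corollaries about Novikov-Shubin invariants and the determinant class condition.

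Concretely, I would first observe that the hypothesis ``$\mathcal D$ is weakly admissible'' means that the lifted system $\widetilde{\mathcal D}$ on the universal cover is weakly $\Gamma$-admissible, so all three machines from this section apply to $\widetilde{\mathcal D}$. I then fix $t \gg 0$ large enough that the conclusions of Theorem \ref{MSSMEQ} hold, and proceed left-to-right along the diagram:
\begin{enumerate}
\item The multiplication map $e^{-t\widetilde f}\colon \Omega_{(2)}^*(\widetilde M,\widetilde E,\widetilde g,\widetilde h)\to\Omega_{(2),t}^*(\widetilde M,\widetilde E,\widetilde g,\widetilde h)$ is a bounded, $\Gamma$-equivariant cochain isomorphism by \ref{driso} (its inverse $e^{t\widetilde f}$ is also bounded since $\widetilde f$ descends from a function on the compact quotient). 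In particular it is a chain homotopy equivalence.
\item The spectral projection $P^*(t)\colon \Omega_{(2),t}^*(\widetilde M,\widetilde E,\widetilde g,\widetilde h)\to \Omega_{Sm,t}^*(\widetilde M,\widetilde E,\widetilde g,\widetilde h)$ onto the small subcomplex is a chain homotopy equivalence by Proposition \ref{shubinch}, with homotopy inverse the inclusion.
\item For $t\gg0$ the map $F^*(t)\colon \Omega_{Sm,t}^*(\widetilde M,\widetilde E,\widetilde g,\widetilde h)\to C_{(2)}^*(\widetilde M,\nabla_{\widetilde{g'}}\widetilde f,\widetilde E,\widetilde h)$ is a \emph{cochain} isomorphism by Theorem \ref{MSSMEQ}, hence a chain homotopy equivalence.
\end{enumerate}
The composition of these three maps is therefore a chain homotopy equivalence of Hilbert $\vnN(\Gamma)$-cochain complexes, which establishes the claimed equivalence in the diagram.

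For the two corollary statements I would then simply apply Proposition \ref{PROP1} to the above chain homotopy equivalence. Part (1) of that proposition gives $\alpha_k(\Omega^*_{(2)}(\widetilde M,\widetilde E,\widetilde g,\widetilde h)) = \alpha_k(C^*_{(2)}(\widetilde M,\nabla_{\widetilde{g'}}\widetilde f,\widetilde E,\widetilde h))$ for every $k$, which, unwrapping the definitions from Definitions \ref{MSCPLX} and \ref{DRCPLX}, is precisely $\alpha_k^{An}(M,E)=\alpha_k^{Top}(M,E)$. Part (2) of the proposition gives the equivalence of a-determinant class and c-determinant class, justifying the unified terminology ``of determinant class'' introduced just before the remark following Equation \ref{RSL2TOR}.

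There is essentially no new obstacle here: all the real work has already been carried out in Proposition \ref{shubinch} and Theorem \ref{MSSMEQ}. The only subtle point to double-check is that the unbounded isomorphism $e^{-t\widetilde f}$ in the first step is genuinely bounded (with bounded inverse) on the $L^2$-completions, which follows because $f$ is defined on the compact quotient $M$ and hence $\widetilde f$ is bounded. After that, the argument is a purely formal chase through the three chain homotopy equivalences and one invocation of Proposition \ref{PROP1}.
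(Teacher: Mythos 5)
Your proof is correct and follows exactly the paper's own route: the paper introduces the theorem with ``Combining \ref{driso} with Proposition \ref{shubinch} and Theorem \ref{MSSMEQ}, we arrive at...'', meaning the proof is precisely the composition of the three chain homotopy equivalences in the displayed diagram, and then an appeal to Proposition \ref{PROP1} for the two listed consequences. Your verification that $e^{-t\widetilde f}$ is bounded with bounded inverse because $\widetilde f$ descends to the compact quotient is the right justification for \ref{driso}.
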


\subsection{Asymptotic expansions}

Let $\mathcal D = (E \downarrow M,g,h,\nabla_{g}f)$ be a weakly admissible system with $\Gamma \coloneqq \pi_1(M)$ and let $\widetilde{\mathcal D} \coloneqq (\widetilde{E} \downarrow \widetilde{M},\widetilde{g},\widetilde{h},\nabla_{\widetilde{g}}\widetilde{f})$ be the $\Gamma$-invariant lift of $\mathcal D$ (throughout this subsection, we assume that $g = g'$). We set $b \coloneqq f^{-1}(\partial M)$. 
For $t \geq 0$, let $\Omega_{(2),t}^*(\widetilde{M},\widetilde{E})$ be the Witten-deformed complex defined in the previous section (with metric induced by $\widetilde{g}$ and $\widetilde{h}$ implicit, in order to simplify notation) with Witten-deformed Laplacian $\Delta_{*,t}[\widetilde{E}]: \Omega_{(2),t}^*(\widetilde{M},\widetilde{E}) \to \Omega_{(2),t}^*(\widetilde{M},\widetilde{E})$.
the orthogonal decomposition into the small and large subcomplex. Further, we define $\Theta^*(t): \ker(\Delta_{*,t}[E]) \to H_{(2)}^*(\widetilde{M},\nabla_{\widetilde{g}}\widetilde{f},\widetilde{E},\widetilde{h})$ to be the isomorphim of finitely-generated Hilbert $\vnN(\Gamma)$-modules that is the
composition $\Theta^* \cdot e^{t\widetilde{f}}$, where $\Theta^*: \ker(\Delta_{*,0}[E]) \to  H_{(2)}^*(\widetilde{M},\nabla_{\widetilde{g}}\widetilde{f},\widetilde{E},\widetilde{h})$ is the isomorphism from \ref{MSINTG}. 
Introduce \begin{align}  \text{Vol}(\mathcal D)(t) \coloneqq \prod_{k=0}^n \vndet(\Theta^k(t))^{(-1)^{k}}. \end{align} Observe that \begin{equation} \text{Vol}(\mathcal D)(0) = T_{(2)}^{Met}(E \downarrow M,g,h,\nabla_{g}f). \end{equation}
Moreover, recall the orthogonal decomposition of subcomplexes 
\begin{align*} \Omega_{(2),t}^*(\widetilde{M},\widetilde{E}) = \Omega_{Sm,t}^*(\widetilde{M},\widetilde{E}) \oplus \Omega_{La,t}^*(\widetilde{M},\widetilde{E}), \end{align*} which implies the following: 
Provided that $E \downarrow M$ is of determinant class, the torsion elements $T^{An}_{(2)}(E \downarrow M,g,h)(t), T^{Sm}_{(2)}(\mathcal D)(t)$ and $T^{La}_{(2)}(\mathcal D)(t)$ of the complexes $\Omega_{(2),t}^*(\widetilde{M},\widetilde{E})$, $\Omega_{Sm,t}^*(\widetilde{M},\widetilde{E})$, respectively $\Omega_{La,t}^*(\widetilde{M},\widetilde{E})$ are
all well-defined positive real numbers, so that
\begin{align} T^{An}_{(2)}(E \downarrow M,g,h)(0) = T^{An}_{(2)}(M,E,g,h),\\
T^{An}_{(2)}(E \downarrow M,g,h)(t) = T^{Sm}_{(2)}(\mathcal D)(t)\cdot T^{La}_{(2)}(\mathcal D)(t). \end{align}
A function $F: \reals \to \reals$ is said to admit an {\itshape asymptotic expansion}, if there exists an integer $N \in \mathbb N$ and constants $(a_j)_{j = 0}^N , (b_j)_{j = 0}^N$ such that for $t \to +\infty$
\begin{equation} F(t) = \sum_{j=0}^N \left( a_j + b_j \log(t) \right) t^j + o(1). \end{equation} The coefficient $a_0$ in the expansion is called the {\itshape free term} of $F$ and is denoted by $\text{FT}(F)$. 
In the special case that $E \downarrow M$ is a unitary bundle and $h$ a flat unitary metric, the proof of the next proposition has been carried out in \cite[Theorem 3.13]{Friedlander:Bd}. In fact, the same proof still works without further modification in the more general case that the bundle $E \downarrow M$ is of product from near $\partial M$, and will therefore be omitted (See also \cite[Proposition 6.4.1]{Ich} for a slightly different proof).

\begin{proposition}[Asymptotic expansion for the analytic torsion] There exists a constant $C \in \reals$, such that the following holds:  For any weakly admissible system $\mathcal D = (E \downarrow M,g,h,\nabla_{g}f)$ of determinant class with $M$ odd-dimensional, the function $\log T^{An}_{(2)}(E \downarrow M,g,h)(t) - \log \text{Vol}(\mathcal D)(t)$ admits the asymptotic expansion. 
\begin{align}\label{ASYANA}   \log T^{An}_{(2)} (E \downarrow M,g,h) -  \log \text{Vol}(\mathcal D)(0) + C t  \dim(E)  \chi(\partial M) . \end{align}
\end{proposition}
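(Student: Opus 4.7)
The plan is to decompose $T^{An}_{(2)}(E \downarrow M,g,h)(t) = T^{Sm}_{(2)}(\mathcal D)(t)\cdot T^{La}_{(2)}(\mathcal D)(t)$ using the orthogonal splitting of the Witten-deformed de Rham complex into its small and large subcomplexes, and to analyse each factor's asymptotic behaviour as $t \to \infty$. Evaluating the claimed expansion at $t=0$ recovers the tautology $\log T^{RS}_{(2)}(\mathcal D) = \log T^{RS}_{(2)}(\mathcal D)$, so the content of the proposition is that $\log T^{An}_{(2)}(E \downarrow M,g,h)(t) - \log\text{Vol}(\mathcal D)(t)$ differs from its $t=0$ value only by a term $C t \dim(E)\chi(\partial M) + o(1)$, i.e.\ the only surviving polynomial coefficient beyond the constant one is linear in $t$.

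For the small subcomplex, Theorem \ref{MSSMEQ} gives the chain isomorphism $F^*(t)\colon \Omega^*_{Sm,t} \to C^*_{(2)}(\widetilde M,\nabla_{\widetilde{g'}}\widetilde f,\widetilde E,\widetilde h)$ for $t\gg 0$ with $S^*(t)F^*(t)I^*(t) = \unit + \mathcal O(t^{-1})$ and $I^*(t)$ isometric. Applying Proposition \ref{PROP2} to $F^*(t)$ and identifying the induced cohomology map with $\Theta^*(t)$ yields
$$\log T^{Sm}_{(2)}(\mathcal D)(t) = \log T^{MS}_{(2)}(\mathcal D) + \sum_{k=0}^n(-1)^k\log\vndet F^k(t) - \log\text{Vol}(\mathcal D)(t).$$
Substituting $\log\vndet F^k(t) = -\log\vndet S^k(t) + \mathcal O(t^{-1})$ and expanding $\log\vndet S^k(t)$ in terms of the explicit scalars $\exp\bigl(\tfrac{n-2\ind(p)}{4}\log(\pi/t) - t f(p)\bigr)$ produces an explicit polynomial in $t$ and $\log(t)$ whose coefficients are index-weighted sums over $\Cr(f)$.

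For the large subcomplex, $\log T^{La}_{(2)}(\mathcal D)(t)$ is a zeta-regularised determinant of $\Delta^{La}_{*,t}$, whose spectrum lies in $[1,\infty)$. Its $t\to\infty$ asymptotic is controlled by a Mellin transform of the heat trace $\vntr(e^{-s\Delta_{*,t}})$ with the small-part contribution subtracted; the Witten rescaling concentrates this trace on neighbourhoods of $\Cr(f)\cup\partial M$. The critical-point contributions are given by explicit Mehler-kernel integrals for the harmonic-oscillator model prescribed by the weak-admissibility axioms $(H_1)$--$(H_4)$, and their zeta-regularised determinants cancel, term by term, the polynomial and $\log(t)$ contributions coming from the small side. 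The remaining boundary contribution is a local heat-kernel invariant integrated over $\partial M$; by the local index theorem for the Witten Laplacian with absolute boundary conditions, combined with the vanishing of the interior Chern-Weil density $e(TM,g)$ in odd dimension, it reduces to a universal density producing the linear term $Ct\cdot\dim(E)\chi(\partial M)$, with $C$ independent of the system.

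The main obstacle is verifying that the non-unitarity of $h$ does not spoil the boundary heat-kernel localisation. Here the product-form axioms of Definition \ref{ADMISSIBLESYSTEM} are essential: on the collar $V\cong\partial M\times[0,\epsilon)$, the bundle $(E,h)|_V$ is isometric to $(E|_{\partial M},h|_{\partial M})\hat\otimes(\ceals,1_\ceals)$, so the Witten Laplacian splits as a tensor product whose normal factor is a standard Neumann Laplacian on $[0,\epsilon)$ and is insensitive to the fibre metric. The resulting boundary heat-kernel coefficients depend only on $h|_{\partial M}$ and its tangential derivatives, reproducing the structure of the unitary case up to bounded multiplicative corrections. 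Consequently the Burghelea-Friedlander-Kappeler argument from \cite[Theorem 3.13]{Friedlander:Bd} transfers verbatim to our setting, yielding the claimed expansion with free term $\log T^{An}_{(2)}(E \downarrow M,g,h) - \log\text{Vol}(\mathcal D)(0)$ and linear term $Ct\dim(E)\chi(\partial M)$.
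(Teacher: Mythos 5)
The paper offers no proof of this proposition: it is justified by citation of \cite[Theorem 3.13]{Friedlander:Bd}, together with the observation --- which your final paragraph correctly identifies as the essential new remark --- that the argument there is entirely local near $\partial M$, so the product-form hypotheses of Definition \ref{ADMISSIBLESYSTEM} make it transfer to non-unitary bundles without modification. Your route, however, is genuinely different from the cited one and contains a gap. You propose to split $T^{An}_{(2)}(t)=T^{Sm}_{(2)}(\mathcal D)(t)\cdot T^{La}_{(2)}(\mathcal D)(t)$ and analyse each factor; but in the paper the asymptotics of $T^{La}_{(2)}(t)$ are a \emph{consequence} of this proposition together with the small-torsion proposition (this is exactly how Corollary \ref{rella}.1 is proved), not an independent input. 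Even granting that a direct Mehler-kernel analysis of $T^{La}_{(2)}(t)$ could produce \emph{some} expansion $a_{0}+P(t,\log t)+o(1)$, you never explain how to identify the resulting free term of $\log T^{An}_{(2)}(t)-\log\text{Vol}(\mathcal D)(t)$ with the $t=0$ value $\log T^{An}_{(2)}(E\downarrow M,g,h)-\log\text{Vol}(\mathcal D)(0)$. That identification is the actual content of the statement and cannot be read off a $t\to\infty$ asymptotic alone. (Your remark that ``evaluating the claimed expansion at $t=0$ recovers a tautology'' reflects this confusion: the expansion is valid only as $t\to\infty$, and the coincidence of its free term with the $t=0$ value is precisely what must be proved.)

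The mechanism that supplies this identification in the cited unitary proof, and in its $L^2$ anomaly analogue \cite{Zhang:AN}, is a Duhamel-type computation of $\frac{d}{dt}\bigl[\log T^{An}_{(2)}(t)-\log\text{Vol}(\mathcal D)(t)\bigr]$ as a local heat-kernel supertrace. This derivative is an integral over $M$ of a density whose interior piece is proportional to the Euler form $e(TM,g)$ and hence vanishes because $\dim M$ is odd, and whose boundary piece localises to a universal constant times $\dim(E)\chi(\partial M)$; and, crucially, because the integrand depends on $g$, $h$ and $f$ only through their germs near $\partial M$, the product-form hypothesis ensures it is the same as in the flat unitary case. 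Integrating the derivative from $0$ to $t$ then yields in a single step both the linear coefficient $C$ and the identity $a_{0}=\log T^{An}_{(2)}-\log\text{Vol}(\mathcal D)(0)$. Your proposal contains no such integration argument, and the asserted ``term-by-term cancellation'' between the Mehler-kernel contributions of $T^{La}_{(2)}(t)$ and the scaling factors $S^*(t)$ from the small side is stated but not substantiated. Supplying the $t$-derivative anomaly formula (for which \cite{Zhang:AN} and \cite{Bruning:An} are the relevant references) is the piece you need to close the gap.
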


\begin{proposition}[Asymptotic expansion for the small torsion] For any weakly admissible system $\mathcal D = (E \downarrow M,g,h,\nabla_{g} f)$ of determinant class with $n \coloneqq \dim(M)$, $\Cr_k(f) \coloneqq \{p \in \Cr(f) \colon \ind(p) = k\}$ and $m_k \coloneqq \# \Cr_k(f)$, the function $\log T^{Sm}_{(2)}(\mathcal D)(t) - \log \text{Vol}(\mathcal D)(t)$ admits the asymptotic expansion
\begin{align}&\label{ASYSMALL} \log T^{(2)}_{MS}(M,E,h,\nabla_{g}f)  + \dim(E) \left( \sum_{k=0}^n (-1)^{k} m_k  \frac{n - 2k}{4}  \log(\pi/t)   +t   (-1)^{k+1}   \sum_{p \in \Cr_k(f)} f(p) \right) + o(1). \end{align}
\end{proposition}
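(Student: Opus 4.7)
My plan is to transfer the torsion computation from the small subcomplex to the Morse--Smale complex via the chain isomorphism $F^*(t)\colon\Omega_{Sm,t}^*(\widetilde M,\widetilde E)\to C_{(2)}^*(\widetilde M,\nabla_{\widetilde g}\widetilde f,\widetilde E,\widetilde h)$ provided by Theorem \ref{MSSMEQ} for $t\gg 0$, and then extract the asymptotics from the approximate identity $S^*(t)\circ F^*(t)\circ I^*(t)=\unit+\mathcal O(t^{-1})$. Both complexes are Hilbert $\vnN(\Gamma)$-cochain complexes of finite type and of determinant class (by the assumption on $\mathcal D$ together with Theorem \ref{ANCOMBCH}), so Proposition \ref{PROP2} applied to $F^*(t)$ yields
\begin{equation}
\log T^{Sm}_{(2)}(\mathcal D)(t)-\log T^{MS}_{(2)}(E\downarrow M,h,\nabla_g f)=\sum_{k}(-1)^k\log\vndet(F^k(t))-\sum_{k}(-1)^k\log\vndet(H^k(F^k(t))).
\end{equation}
The remaining work is to analyse the two sums asymptotically and combine them with $\log\text{Vol}(\mathcal D)(t)$.

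For the cohomology contribution, I will verify that $H^k(F^k(t))=\Theta^k(t)$. Since $F^*(t)=\Int^*\circ e^{t\widetilde f}$ factors as the chain isomorphism \ref{driso} followed by the Dodziuk--Schick--Shubin integration map inducing $\Theta^*$ on cohomology, a diagram chase under the Hodge identification $H^*(\Omega_{Sm,t}^*)\cong\ker(\Delta_{*,t}[\widetilde E])$ identifies the induced cohomology map with $\Theta^*\circ e^{t\widetilde f}=\Theta^*(t)$. Consequently the cohomology sum reassembles, up to the convention fixing the sign of $T^{(2)}$, into $\log\text{Vol}(\mathcal D)(t)$, which is precisely the subtraction appearing on the left-hand side of the target expansion.

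For the cochain contribution, I will take Fuglede--Kadison determinants in $S^*(t)\circ F^*(t)\circ I^*(t)=\unit+\mathcal O(t^{-1})$. The isometry $I^k(t)=Q^k(t)(Q^k(t)^*Q^k(t))^{-1/2}$ is unitary for $t\gg 0$ (as $Q^k(t)$ is then an isomorphism), so $\vndet(I^k(t))=1$. Combining multiplicativity with the continuity estimate $|\log\vndet(\unit+A)|=|\vntr\log|\unit+A||=O(\|A\|)$, valid on finitely generated $\vnN(\Gamma)$-modules once $\|A\|<1$, I obtain $\log\vndet(F^k(t))=-\log\vndet(S^k(t))+o(1)$ as $t\to\infty$. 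The operator $S^k(t)$ is the explicit diagonal scaling on $C_{(2)}^k\cong L^2(\Gamma)^{m_k}\otimes V$ sending the summand at $p\in\Cr_k(f)$ to itself multiplied by $\exp\bigl(\tfrac{n-2k}{4}\log(\pi/t)-tf(p)\bigr)$, the scalar being well-defined on $\Gamma$-orbits thanks to the $\Gamma$-invariance of $\widetilde f$. Passing to a fundamental domain and using that $L^2(\Gamma)\otimes V$ has von Neumann dimension $\dim(E)$ then yields
\begin{equation}
\log\vndet(S^k(t))=\dim(E)\sum_{p\in\Cr_k(f)}\Bigl(\tfrac{n-2k}{4}\log(\pi/t)-tf(p)\Bigr).
\end{equation}

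Assembling the three ingredients and keeping careful track of signs produces the claimed expansion. The main obstacle I expect is the continuity step above: the $\mathcal O(t^{-1})$ remainder of Theorem \ref{MSSMEQ} is expressed in operator norm, and for the perturbation to translate into an $o(1)$ perturbation of the log-determinant I need uniform invertibility of $\unit+\mathcal O(t^{-1})$ together with a spectral gap near $0$ for $S^*(t)F^*(t)I^*(t)$ uniformly in $t\gg 0$. Both are implicit in the asymptotic analysis underlying Theorem \ref{MSSMEQ}, but extracting the explicit decay rate for the log-determinant requires a short elementary argument controlling $\vndet$ on small perturbations of the identity on finitely generated Hilbert $\vnN(\Gamma)$-modules.
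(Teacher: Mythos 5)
Your proof follows essentially the same route as the paper: applying Proposition \ref{PROP2} to the chain isomorphism $F^*(t)$ from Theorem \ref{MSSMEQ}, identifying the cohomology contribution with $\log\text{Vol}(\mathcal D)(t)$, then extracting $\log\vndet(F^k(t))=-\log\vndet(S^k(t))+o(1)$ from $S^*(t)F^*(t)I^*(t)=\unit+\mathcal O(t^{-1})$ via multiplicativity and computing $\vndet(S^k(t))$ from the explicit diagonal formula \ref{SCALAR}. The only difference is that you flag two technical points the paper leaves implicit --- the verification that $H^k(F^k(t))=\Theta^k(t)$ and the continuity estimate for $\log\vndet$ near the identity on finitely generated modules --- both of which are correct and easily supplied, so your account is a slightly more spelled-out version of the same argument.
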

\begin{proof} For large $t >> 0$, there exists by Theorem \ref{MSSMEQ} an isomorphism of finitely generated Hilbert $\vnN(\Gamma)$-cochain complexes 
\begin{equation} F^*(t): \Omega_{Sm,t}^*(\widetilde{M},\widetilde{E},\widetilde{g},\widetilde{h}) \to C^*_{(2)}(\widetilde{M},\nabla_{\widetilde{g}}\widetilde{f},\widetilde{E},\widetilde{h}). \end{equation} 
From Proposition \ref{PROP2}, it then follows that
\begin{equation}\label{ASYSM1} \log T^{Sm}_{(2)}(M,E,g,h,f)(t) - \log \text{Vol}(t) = \log T^{(2)}_{MS}(M,E,h,\nabla_{g}f)  - \sum_{k=0}^n (-1)^k \log \vndet F^k(t). \end{equation} 
Recall also from Theorem \ref{MSSMEQ} the formula $S^k(t) \circ F^k(t) \circ I^k(t) = \unit_{C^k_{(2)}} + O(t^{-1})$, where $I^k(t)$ is the isometry from \ref{ISOMETRY} and $S^k(t)$ is the scaling isomorphism from \ref{SCALAR}. 
Consequently, by the multiplicativity of the Fuglede-Kadison determinant in this setting \cite[Theorem 3.14]{Lueck:book}, it holds that
\begin{equation} \log \vndet F^k(t) = - \log \vndet S^k(t) + o(1). \end{equation} 
From the explicit formula of $S^k(t)$ \ref{SCALAR}, we obtain 
\begin{equation}\label{ASYSM2} \vndet S^k(t) = \left( \prod_{p \in \Cr_k(f)} (\pi/t)^{\frac{n-2k}{4}}e^{-tf(p)} \right)^{\dim(E)}. \end{equation}
The result now is an immediate consequence of \ref{ASYSM1} -- \ref{ASYSM2}. 
\end{proof}

\begin{corollary}[Asymptotic expansion for the large torsion]\label{rella} Let $\mathcal D = (E \downarrow M, g,h,\nabla_{g} f)$ be a weakly admissible system of determinant class with $M$ odd-dimensional. Then, the following assertions hold
\begin{enumerate} \item The function $\log T^{La}_{(2)}(\mathcal D)(t)$ admits an asymptotic expansion. More precisely, there exists a polynomial $\Phi(\mathcal D)(t): \reals \to \reals$ in $t$ and $\log(t)$, such that for $t \to \infty$ 
\begin{equation} \log T^{La}_{(2)}(\mathcal D)(t) = R(\mathcal D) + \Phi(\mathcal D)(t) + o(1). \end{equation}
Finally, for any arbitrary small neighborhood $U$ of $\Cr(f) \cup \partial M$, the polynomial $\Phi(\mathcal D)$ depends only on the isomorphism class of the system $\mathcal D^f|_{U} \coloneqq (E|_{U} \downarrow U, g|_{U},h|_{U},f|_{U})$.
\item Suppose that $\mathcal D_1 = (E_1 \downarrow M_1,g_1,h_1,\nabla_{g_1} f_1)$ is another weakly admissible system, such that there exists neighborhoods $U \subseteq M$ of $\Cr(f) \cup \partial M$ and $U_1 \subset M_1$ of $\Cr(f_1) \cup \partial M_1$ with the property that the derived systems $\mathcal D^f|_U \coloneqq (E|_{U} \downarrow U, g|_{U},h|_{U},f|_{U})$
and \\$\mathcal D^{f_1}_1|_{U_1} \coloneqq (E_1|_{U_1} \downarrow U_1, g|_{U_1},h|_{U_1},f_1|_{U_1})$ are isomorphic (in particular $\# \Cr_k(f) = \# \Cr_k(f_1)$ for each $0 \leq k \leq n$).
Then \begin{align} R(\mathcal D) - R(\mathcal D_1) = \text{FT}\left( \log T^{La}_{(2)}(\mathcal D) \right) - \text{FT}\left( \log T^{La}_{(2)}(\mathcal D_1) \right). \end{align}
\item Under the assumptions of $(2)$, there exists local quantities $\alpha(\mathcal D) \in \Omega^n(M\setminus \Cr(f),\mathcal O_M)$ and $\alpha(\mathcal D_1) \in \Omega^n(M_1\setminus \Cr(f_1),\mathcal O_{M_1})$ of the derived systems systems $\mathcal D|_{M \setminus \Cr(f)} $ and $\mathcal D_1|_{M_1 \setminus \Cr(f_1)}$, such that one has
\begin{equation} \text{FT}\left( \log T^{La}_{(2)}(\mathcal D) \right) - \text{FT}\left( \log T^{La}_{(2)}(\mathcal D_1) \right)  = \int_{M \setminus \Cr(f)} \alpha(\mathcal D) - \int_{M \setminus \Cr(f_1)} \alpha(\mathcal D_1).\end{equation}
\end{enumerate}
\end{corollary}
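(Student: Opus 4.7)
The strategy is to exploit the orthogonal splitting $\Omega^*_{(2),t}(\widetilde M,\widetilde E) = \Omega^*_{Sm,t} \oplus \Omega^*_{La,t}$, which gives the multiplicative identity $T^{An}_{(2)}(E\downarrow M,g,h)(t) = T^{Sm}_{(2)}(\mathcal D)(t)\cdot T^{La}_{(2)}(\mathcal D)(t)$. Taking logarithms and subtracting $\log\mathrm{Vol}(\mathcal D)(t)$ from both expansions \eqref{ASYANA} and \eqref{ASYSMALL} (so that the $\log\mathrm{Vol}(\mathcal D)(t)$-terms cancel against each other), I obtain
\[
\log T^{La}_{(2)}(\mathcal D)(t) = \bigl[\log T^{An}_{(2)}(E\downarrow M,g,h) - \log\mathrm{Vol}(\mathcal D)(0) - \log T^{MS}_{(2)}(E \downarrow M,h,\nabla_g f)\bigr] + \Phi(\mathcal D)(t) + o(1),
\]
where the polynomial is explicitly
\[
\Phi(\mathcal D)(t) = C\,t\,\dim(E)\chi(\partial M) - \dim(E)\sum_{k=0}^{n}(-1)^{k}m_k\frac{n-2k}{4}\log(\pi/t) - t\dim(E)\sum_{k=0}^{n}(-1)^{k+1}\!\!\sum_{p\in\Cr_k(f)}\!\!f(p).
\]
Since $\mathrm{Vol}(\mathcal D)(0) = T^{Met}_{(2)}(\mathcal D)$ by definition, the constant term collapses to $\log T^{RS}_{(2)}(\mathcal D) - \log T^{MS}_{(2)}(\mathcal D) = \mathcal R(\mathcal D)$, which we identify with $R(\mathcal D)$. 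Inspection of $\Phi(\mathcal D)$ shows that its coefficients only involve $\dim(E)$, $\chi(\partial M)$, the integers $m_k = \#\Cr_k(f)$, and the critical values $f(p)$ of $f$. All of these are determined by the restriction of $\mathcal D$ together with $f$ to any arbitrarily small neighbourhood $U$ of $\Cr(f)\cup\partial M$, establishing assertion~$(1)$.

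Assertion~$(2)$ is then immediate: the local data $\mathcal D^f|_U \cong \mathcal D_1^{f_1}|_{U_1}$ forces $\Phi(\mathcal D) = \Phi(\mathcal D_1)$, so subtracting the two expansions yields
\[
\log T^{La}_{(2)}(\mathcal D)(t) - \log T^{La}_{(2)}(\mathcal D_1)(t) = R(\mathcal D) - R(\mathcal D_1) + o(1),
\]
which means exactly that the difference of free terms computes $R(\mathcal D)-R(\mathcal D_1)$.

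For assertion~$(3)$, the plan is to realize the free term $\mathrm{FT}(\log T^{La}_{(2)}(\mathcal D))$ as an integral of a pointwise local quantity by working with the heat kernel of the Witten–Laplacian restricted to the large subcomplex. Because $\Delta^{La}_{*,t}$ has spectrum bounded below by $1$, its truncated zeta functions $\zeta^{La}_{k,t}(s) = \Gamma(s)^{-1}\!\int_0^\infty s^{s-1}\vntr(e^{-s\Delta^{La}_{k,t}})\,ds$ are entire, and $\log T^{La}_{(2)}(\mathcal D)(t)$ is a finite linear combination of their derivatives at $s=0$. Using a $\Gamma$-fundamental domain $\mathcal F$ and the pointwise formula $\vntr(e^{-s\Delta^{La}_{k,t}}) = \int_{\mathcal F}\tr(e^{-s\Delta^{La}_{k,t}}(x,x))\,d\mu_g(x)$, one can commute the spatial integration with the $s$-integration and the $t\to\infty$ limit on the complement of any neighbourhood of $\Cr(f)\cup\partial M$, where the heat kernel admits locally uniform small-$s$ expansions with coefficients that depend only on a jet of the system $\mathcal D$. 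The resulting pointwise contribution defines a density $\alpha(\mathcal D)\in\Omega^n(M\setminus\Cr(f),\mathcal O_M)$ whose restriction and naturality properties follow automatically from the corresponding properties of the heat kernel and the Witten deformation, making it a local quantity in the required sense. Combined with the matching local contributions from $\mathcal D_1$, subtracting the local integrals handles all potential divergences near $\Cr(f)\cup\partial M$ because the near-singular terms are identified by the local isomorphism, and what remains is $\mathrm{FT}(\log T^{La}_{(2)}(\mathcal D)) - \mathrm{FT}(\log T^{La}_{(2)}(\mathcal D_1)) = \int_{M\setminus\Cr(f)}\alpha(\mathcal D) - \int_{M_1\setminus\Cr(f_1)}\alpha(\mathcal D_1)$.

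The main obstacle is assertion~$(3)$: isolating the free term as a convergent spatial integral requires controlling both the large-$t$ behaviour of the Witten Laplacian's heat kernel on $\Omega^*_{La,t}$ uniformly away from the critical points and the $\partial M$-collar, and the cancellation of non-integrable singular contributions at $\Cr(f)$ between $\mathcal D$ and $\mathcal D_1$ via the local isomorphism hypothesis. This mirrors the construction carried out in \cite[Section~4]{Friedlander:Rel} in the closed, unitary case; the novelty here is the boundary contribution, which is controlled by the product-form hypothesis near $\partial M$ built into the definition of a weakly admissible system.
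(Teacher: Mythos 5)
Your proof of parts $(1)$ and $(2)$ is correct and follows the paper's argument exactly: subtract the asymptotic expansion \eqref{ASYSMALL} of $\log T^{Sm}_{(2)}(\mathcal D)(t) - \log\mathrm{Vol}(\mathcal D)(t)$ from the expansion \eqref{ASYANA} of $\log T^{An}_{(2)}(\mathcal D)(t) - \log\mathrm{Vol}(\mathcal D)(t)$, use $\mathrm{Vol}(\mathcal D)(0)=T^{Met}_{(2)}(\mathcal D)$ to identify the constant as $\mathcal R(\mathcal D)$, observe that the coefficients of the resulting polynomial $\Phi(\mathcal D)$ depend only on $\dim(E)$, $\chi(\partial M)$, the $m_k$ and the critical values $f(p)$, and then in $(2)$ invoke the local isomorphism to force $\Phi(\mathcal D)=\Phi(\mathcal D_1)$ so the difference of free terms equals $R(\mathcal D)-R(\mathcal D_1)$.

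For part $(3)$ you sketch a heat-kernel argument to isolate the free term as an integral of a local density. The paper itself does not re-derive this: it simply cites \cite[Theorem~B, Section~6.2]{Friedlander:Uni} (unitary case, $\partial M=\emptyset$) and \cite[Proposition~4.2]{Friedlander:Rel} (general flat bundles, $\partial M = \emptyset$), asserting the same proof carries over verbatim to $\partial M \neq \emptyset$. Your sketch is in the spirit of those references and you correctly flag the two nontrivial points (locally uniform control of the Witten heat kernel away from $\Cr(f)\cup\partial M$ as $t\to\infty$, and cancellation of singular contributions at $\Cr(f)$ via the local isomorphism), but as written it is only a plan, not a proof — in particular the assertion that the spatial integral commutes with the Mellin integral and the $t\to\infty$ limit is exactly what the cited references establish, and would need to be carried out. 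Minor notational point: your Mellin integral uses $s$ both as the zeta variable and the integration variable; it should read $\Gamma(s)^{-1}\int_0^\infty u^{s-1}\vntr\bigl(e^{-u\Delta^{La}_{k,t}}\bigr)\,du$. On balance, this is the same approach as the paper, with the difference that the paper cites for $(3)$ while you attempt a (necessarily incomplete) sketch of the citation's content.
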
 

\begin{proof} 
\begin{enumerate} \item We have $\log T^{An}_{(2)}(E \downarrow M,g,h)(t) = \log T^{Sm}_{(2)}(\mathcal D)(t) + \log T^{La}_{(2)}(\mathcal D)(t)$, hence also in particular
\begin{equation*} \left( \log T^{An}_{(2)}(E \downarrow M,g,h)(t) - \log \text{Vol}(\mathcal D)(t) \right) -  \left( \log T^{Sm}_{(2)}(\mathcal D)(t) - \log \text{Vol}(\mathcal D)(t) \right) = \log T^{La}_{(2)}(\mathcal D)(t). \end{equation*}
Since the left-hand side of the equation admits an asymptotic expansion, given by the sum of the explicit formulas \ref{ASYANA} and \ref{ASYSMALL}, the result follows. 
\item Observe that $\text{FT}\left( \log T^{La}_{(2)}(\mathcal D) \right) = R(\mathcal D) +  \text{FT}(\Phi(\mathcal D))$ and analogously $\text{FT}\left( \log T^{La}_{(2)}(\mathcal D_1) \right) = R(\mathcal D_1) +  \text{FT}(\Phi(\mathcal D_1))$. 
Since the systems $\mathcal D^f|_{U}$ and $\mathcal D^{f_1}|_{U_1}$ are isomorphic by assumption, assertion $(1)$ implies that $\Phi(\mathcal D) \equiv \Phi(\mathcal D_1)$ and the result follows. 
\item In case that $\partial M = \emptyset$, this is proven in \cite[Theorem B, Section 6.2]{Friedlander:Uni} for unitary bundles (whose proof is also referred to in \cite[Proposition 4.2]{Friedlander:Rel} for arbitrary flat bundles). The same proof works without any modifications in the case that $\partial M \neq \emptyset$. 
\end{enumerate}
\end{proof}

\section{Proof of Theorem \ref{MAINTHEOREM}}
Armed with the results of the previous two sections, we will closely follow the strategy of \cite{Friedlander:Bd} and use Zhang's result \ref{CLOSINV2} to prove Theorem \ref{MAINTHEOREM}. 
\begin{proposition}\label{gluecomp} For $i=1,2$, let $\mathcal D_i = (M_i,E_i,g_i,h_i,\nabla_{g_i} f_i)$ be two weakly admissible systems satisfying the assumptions of Corollary \ref{rella}.2. Moreover, assume that there exists a flat bundle $E_3 \downarrow M_3$ with $M_3$ compact, satisfying
\begin{enumerate} \item $(E_3|_{\partial M_3}) \downarrow \partial M_3 = E_i|_{\partial M_i} \downarrow \partial M_i$, and
\item the bundle $\overline{E_i} \downarrow N_i$ is of determinant class, where $N_i \coloneqq M_3 \cup_{\partial M_3} M_i$ and \\ $\overline{E_i}\coloneqq E_3 \cup_{E_3|_{\partial M_3}} E_i$. 
\end{enumerate}
Then \begin{equation}\label{relcom1} \mathcal R(\mathcal D_1) + \frac{1}{2}\int_{M_1} \theta(h_1) \wedge (\nabla_{g_1}f_1)^* \Psi(TM_1,g_1) = \mathcal R(\mathcal D_2) + \frac{1}{2}\int_{M_2} \theta(h_2) \wedge (\nabla_{g_2}f_2)^* \Psi (TM_2,g_2). \end{equation}
\end{proposition}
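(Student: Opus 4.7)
The plan is to reduce to Zhang's closed-manifold formula (Theorem \ref{CLOSINV2}) by closing up both $\mathcal D_i$ with a common auxiliary piece $\mathcal D_3$ and comparing via Corollary \ref{rella}.

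First I would fix, once and for all, a weakly admissible Morse-Smale system $\mathcal D_3 = (E_3 \downarrow M_3, g_3, h_3, \nabla_{g_3}f_3)$ of product form near $\partial M_3$, with boundary data matching the common restrictions $E_3|_{\partial M_3} = E_i|_{\partial M_i}$. Gluing $\mathcal D_3$ onto $\mathcal D_i$, with a small smooth interpolation of the Morse functions in a collar of $\partial M_3 = \partial M_i$ performed identically on both sides, produces Morse-Smale systems $\overline{\mathcal D_i} = (\overline{E_i} \downarrow N_i, \overline{g_i}, \overline{h_i}, \nabla_{\overline{g_i}}\overline{f_i})$ on the closed manifolds $N_i = M_3 \cup_{\partial M_3} M_i$ that coincide with $\mathcal D_i$ on $M_i$ and $\mathcal D_3$ on $M_3$ outside the collar. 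By the determinant class hypothesis (2), $\overline{E_i} \downarrow N_i$ is of determinant class, so all relevant torsions exist.

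Applying Theorem \ref{CLOSINV2} to each $\overline{\mathcal D_i}$ yields
\begin{equation*}
\mathcal R(\overline{\mathcal D_i}) = -\frac{1}{2}\int_{N_i}\theta(\overline{h_i}) \wedge (\nabla_{\overline{g_i}}\overline{f_i})^*\Psi(TN_i, \overline{g_i}).
\end{equation*}
Since the integrand is a local quantity, integration splits into contributions from $M_3$, the collar, and $M_i$ away from the collar. The $M_3$ contributions are literally the same for $i = 1, 2$. The collar contributions are also the same: by the Corollary \ref{rella}.2 hypothesis, $\mathcal D_1$ and $\mathcal D_2$ are isomorphic in a neighborhood of their boundaries, and the interpolation was chosen identically. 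By the same token, the collar portions of the original integrals $\int_{M_i}\theta(h_i) \wedge (\nabla_{g_i}f_i)^*\Psi(TM_i,g_i)$ agree between $i = 1, 2$. Combining these cancellations,
\begin{equation*}
\mathcal R(\overline{\mathcal D_1}) - \mathcal R(\overline{\mathcal D_2}) = -\frac{1}{2}\int_{M_1}\theta(h_1)\wedge (\nabla_{g_1}f_1)^*\Psi(TM_1,g_1) + \frac{1}{2}\int_{M_2}\theta(h_2)\wedge (\nabla_{g_2}f_2)^*\Psi(TM_2,g_2).
\end{equation*}

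To conclude, I would show $\mathcal R(\overline{\mathcal D_1}) - \mathcal R(\overline{\mathcal D_2}) = \mathcal R(\mathcal D_1) - \mathcal R(\mathcal D_2)$ via two applications of Corollary \ref{rella}.3. That corollary provides local quantities whose integral differences compute the corresponding relative-torsion differences. The required hypothesis of \ref{rella}.2 holds for $(\mathcal D_1, \mathcal D_2)$ by assumption; it also holds for $(\overline{\mathcal D_1}, \overline{\mathcal D_2})$ on the closed $N_i$, since near $\Cr(f_3) \subset M_3$ both closed systems literally equal $\mathcal D_3$, and near $\Cr(f_i) \subset M_i$ they inherit the assumed isomorphism of the original pair. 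Locality causes the $M_3$ and collar contributions to cancel in the closed case, leaving the same expression $\int_{M_1}\alpha(\mathcal D_1) - \int_{M_2}\alpha(\mathcal D_2)$ as in the bounded case. Hence the two differences of $\mathcal R$ coincide, and substitution into the previous display yields \ref{relcom1}.

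The chief technical obstacle lies in the gluing construction: a type II Morse function on $M_i$ has gradient pointing out of the boundary, and matching it smoothly to any Morse function on $M_3$ forces an interpolation in the collar that may introduce extra critical points. The product form of $g_i$ and $h_i$ near $\partial M_i$ ensures that this interpolation can be performed symmetrically in both gluings while preserving the Morse--Smale property, so that any additional critical points created lie in the common collar and contribute identically to $\overline{\mathcal D_1}$ and $\overline{\mathcal D_2}$, canceling in the difference.
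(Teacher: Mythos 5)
Your proposal is correct and follows essentially the same route as the paper's proof: glue in $M_3$ to close up both manifolds, compare relative torsions via Corollary \ref{rella}.3, apply Zhang's closed-manifold formula (Theorem \ref{CLOSINV2}), and exploit the locality of both the $\theta\wedge\Psi$ densities and the $\alpha$-densities to cancel the $M_3$ contributions. The only difference is a trivial reordering — you apply Theorem \ref{CLOSINV2} first and then Corollary \ref{rella}.3, whereas the paper does the reverse — and an unneeded worry about collar interpolation: since type $\rom{2}$ Morse functions satisfy $f_i=b-t$ in a collar, one can choose $f_3$ so that $f_3\cup_{\partial M_3}f_i$ is automatically smooth and Morse with $\Cr(\overline{f}_i)\cap M_i=\Cr(f_i)$, eliminating the need for an interpolation region.
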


\begin{proof} Choose a smooth function $f_3: M_3 \to \reals$ on $M_3$ with $f_3|_{\partial M_3} = f_i|_{\partial M_i}$ for $i=1,2$ and such that the function $\overline{f}_i \coloneqq f_3 \cup_{\partial M_3} f_i: N_i \to \reals$ is a Morse function. Furthermore, choose a Riemannian metric $g_3$ on $M_3$ with $g_3|_{\partial M_3} = g_i|_{\partial M_i}$ for $i=1,2$, such that for the metric $\overline{g}_i  = g_3 \cup_{\partial M_3} g_i$ on $N_i$, the pair $(\overline{f}_i,\overline{g}_i)$ is a Morse-Smale pair (since $N_i$ is closed, there is no distinction between type \rom{1} and type \rom{2}). Lastly, choose a Hermitian form $h_3$ on the flat bundle $E_3 \downarrow M_3$ with $h_3|_{\partial M_3} = h_i|_{\partial M_i}$ for $i=1,2$ with $\overline{h}_i \coloneqq h_3 \cup_{\partial M_3} h_i$, such that the system 
\begin{equation} \overline{\mathcal {D}}_i \coloneqq (\overline{E_i} \downarrow N_i,\overline{g}_i,\overline{h}_i,\nabla_{\overline{g}_i}\overline{f}_i)\end{equation} is weakly admissible. By construction, the pair $\overline{\mathcal{D}_i}$ also satisfies the assumptions of Corollary \ref{rella}.2. Applying Corollary \ref{rella}.3, we can find densities $\alpha_i $ on $M_i \setminus \Cr(f_i)$ and $\overline{\alpha_i}$ on $N_i \setminus \Cr(\overline{f_i})$, so that
\begin{align} \label{dif1} R(\mathcal D_1) - R(\mathcal D_2)  = \int_{M_1 \setminus \Cr(f_1)} \alpha_1 - \int_{M_2 \setminus \Cr(f_2)} \alpha_2, \\
\label{dif2} R(\overline{\mathcal D_1}) - R(\overline{\mathcal D_2})  = \int_{N_1 \setminus \Cr(\overline{f_1})} \overline{\alpha_1} - \int_{N_2 \setminus \Cr(\overline{f_2})} \overline{\alpha_2}. \end{align}
Since the densities are local quantities, it follows from the chosen metrics on the respective bundles that $\alpha_i = \overline{\alpha_i}|_{M_i}$ and $\overline{\alpha_1}|_{M_3} = \overline{\alpha_2}|_{M_3}$. Moreover, since $\Cr(\overline{f_i}) \cap M_i = \Cr(f_i)$ by construction, we get from \ref{dif1} and \ref{dif2}
\begin{align}\label{bdclos} R(\mathcal D_1) - R(\mathcal D_2) = R(\overline{\mathcal D}_1) - R(\overline{\mathcal D}_2).\end{align}
 As $N_i$ is closed, we can apply Theorem \ref{CLOSINV2} and obtain 
\begin{align}  & R(\overline{\mathcal D_i}) = \frac{1}{2}\int_{N_i} \theta(\overline{E_i},\overline{h_i} ) \wedge (\nabla_{\overline{g_i}} \overline f_i)^*\Psi (TN_i,\overline{g_i}) \label{clos}, \end{align}
As mentioned in the introduction, the $n$-form $\theta(\overline{E_i},\overline{h_i}) \wedge (\nabla_{\overline{g_i}} \overline f_i)^*\Psi (TN_i,\overline{g_i})$ is a local quantity. In particular, it follows both that $\theta(\overline{E_i},\overline{h_i}) \wedge (\nabla_{\overline{g_i}} \overline {f_i})^*\Psi (TN_i,\overline{g_i})|_{M_i} = \theta(E_i,h_i) \wedge (\nabla_{g_i}f_i)^*\Psi(TM_i,g_i)$ and that $\theta(\overline{E_1},\overline{h_1} )\wedge(\nabla_{\overline{g_1}} \overline f_1)^*\Psi (TN_1,\overline{g_1})|_{M_3} = \theta(\overline{E_2},\overline{h_2} )\wedge(\nabla_{\overline{g_2}} \overline f_2)^*\Psi (TN_2,\overline{g_2})|_{M_3}$. Therefore \begin{align}& \int_{N_1} \theta(\overline{E_1},\overline{h_1} )\wedge(\nabla_{\overline{g_1}} \overline f_1)^*\Psi (TN_1,\overline{g_1}) \nonumber - \int_{N_2} \theta(\overline{E_2},\overline{h_2} )\wedge(\nabla_{\overline{g_2}} \overline f_2)^*\Psi (TN_2,\overline{g_2}) \nonumber\\ \label{loc2}& = \int_{M_1} \theta(E_1,h_1)\wedge(\nabla_{g_1}f_1)^* \Psi(TM_1,g_1) - \int_{M_2} \theta(E_2,h_2)\wedge(\nabla_{g_2}f_2)^* \Psi (TM_2,g_2).\end{align} Equation \ref{relcom1} now is an immediate consequence of \ref{bdclos} -- \ref{loc2}.  
\end{proof}

\begin{theorem}\label{strongthm} Assume that $\mathcal D_i = (E_i \downarrow M_i,g_i,h_i,\nabla_{g_i'}f_i)$ are two admissible systems with $M_i$ odd-dimensional, $(\partial M_1, g_1|_{\partial M_1}) = (\partial M_2, g_2|_{\partial M_2})$ and  $(E_1|_{\partial M_1},h_1|_{\partial M_1}) = (E_2|_{\partial M_2},h_2|_{\partial M_2})$. Then, if both $E_i \downarrow M_i$ and $E_i|_{\partial M_i} \downarrow M_i$ are of determinant class, we get
\begin{equation}\label{power} \mathcal R(\mathcal D_1) + \frac{1}{2} \int_{M_1} \theta(E_1,h_1) \wedge  (\nabla_{g_1'}f_1)^* \Psi(TM_1,g_1) \nonumber \\ = \mathcal R(\mathcal D_2) + \frac{1}{2}\int_{M_2} \theta(E_2,h_2) \wedge (\nabla_{g_2'}f_2)^* \Psi(TM_2,g_2). \end{equation}
\end{theorem}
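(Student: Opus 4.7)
The plan is to reduce Theorem \ref{strongthm} to Zhang's closed-manifold result (Theorem \ref{CLOSINV2}) by closing up $M_1$ and $M_2$ with a common auxiliary piece. Using Lemma \ref{UNIMODEX} and the common boundary data, I first pick an admissible system $\mathcal D_3 = (E_3 \downarrow M_3, g_3, h_3, \nabla_{g_3'} f_3)$ on a compact manifold $M_3$ with $\partial M_3 = \partial M_1 = \partial M_2$ (under the given identification), so that all boundary data of $\mathcal D_3$ matches the common boundary data of $\mathcal D_1, \mathcal D_2$ and $E_3 \downarrow M_3$ is of determinant class. For each $i = 1, 2$ I form the closed manifold $N_i = M_i \cup_\partial M_3$ with the naturally extended Morse-Smale system $\overline{\mathcal D}_i$; smoothness of the extensions is ensured by the product form of the metrics and the matching type II Morse structure near the common boundary, and $\overline E_i \downarrow N_i$ is of determinant class by Proposition \ref{glueglue}.

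Applying Zhang's Theorem \ref{CLOSINV2} on the closed manifold $N_i$ yields
\begin{equation*}
\mathcal R(\overline{\mathcal D}_i) = -\tfrac{1}{2}\int_{N_i}\theta(\overline h_i)\wedge\nabla(\overline f_i)^*\Psi(TN_i,\overline g_i).
\end{equation*}
Since the integrand is a local quantity, the integral splits as $\int_{M_i}\theta(h_i)\wedge\nabla f_i^*\Psi(TM_i,g_i) + \int_{M_3}\theta(h_3)\wedge\nabla f_3^*\Psi(TM_3,g_3)$, and the $M_3$-contribution is the same for $i = 1, 2$. Subtracting the two identities cancels this term and leaves
\begin{equation*}
\mathcal R(\overline{\mathcal D}_1) - \mathcal R(\overline{\mathcal D}_2) = -\tfrac{1}{2}\Bigl[\int_{M_1}\theta(h_1)\wedge\nabla f_1^*\Psi - \int_{M_2}\theta(h_2)\wedge\nabla f_2^*\Psi\Bigr].
\end{equation*}

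The remaining and most delicate step -- the main obstacle -- is the \emph{gluing additivity} that $\mathcal R(\overline{\mathcal D}_i) - \mathcal R(\mathcal D_i)$ depends only on $\mathcal D_3$, so that this difference is independent of $i$ and one obtains $\mathcal R(\overline{\mathcal D}_1) - \mathcal R(\overline{\mathcal D}_2) = \mathcal R(\mathcal D_1) - \mathcal R(\mathcal D_2)$. I would establish this by reproducing the scheme of Proposition \ref{gluecomp}: by Corollary \ref{rella}.3 the free terms of the large-torsion asymptotic expansion, which are essentially $R(\mathcal D_i)$ and $R(\overline{\mathcal D}_i)$, admit integral representations by local densities supported away from the critical sets. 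Since $\overline{\mathcal D}_i$ agrees with $\mathcal D_i$ on $M_i$ and with $\mathcal D_3$ on $M_3$, locality forces these densities to coincide on $M_i$, so the $M_i$-contribution to $R(\overline{\mathcal D}_i) - R(\mathcal D_i)$ vanishes, leaving only an $M_3$-contribution that is independent of $i$. The subtlety is that Corollary \ref{rella}.3 presupposes the local isomorphism hypothesis of Corollary \ref{rella}.2, which need not hold for the pair $(\mathcal D_1,\mathcal D_2)$; I plan to arrange it by first using subdivision (Corollary \ref{RELINVSUB}) together with the metric-anomaly formula (Proposition \ref{METANOREAL}) to modify the systems into ones to which Corollary \ref{rella}.2 applies, while recording the resulting explicit correction terms and checking that they cancel against the corresponding subdivision/anomaly contributions to the integral $\int_{M_i}\theta(h_i)\wedge\nabla f_i^*\Psi$. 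Combining the resulting additivity with the identity for $\mathcal R(\overline{\mathcal D}_1) - \mathcal R(\overline{\mathcal D}_2)$ yields Theorem \ref{strongthm} after rearrangement.
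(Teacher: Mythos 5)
Your plan correctly identifies the high-level idea (reduce to Zhang's closed-manifold theorem by capping off with a common auxiliary piece, then use the locality of the densities from Corollary \ref{rella}.3, handling the local-isomorphism hypothesis by subdivision and the metric-anomaly formula), and this is essentially the content of Proposition \ref{gluecomp} combined with the paper's Case 2. The genuine gap is at the very first step: you \emph{assume} the existence of an admissible system $\mathcal D_3 = (E_3 \downarrow M_3, g_3, h_3, \nabla_{g_3'}f_3)$ on a compact $M_3$ with $\partial M_3 = \partial M_1$, with the boundary bundle data matching and with the glued bundles $\overline{E_i} \downarrow N_i$ of determinant class, and attribute this to Lemma \ref{UNIMODEX}. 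But Lemma \ref{UNIMODEX} only concerns extension of unimodular metrics on a \emph{given} flat bundle; it says nothing about producing a filling manifold. In general no such $(M_3,E_3)$ exists: the closed even-dimensional manifold $\partial M_1$ need not bound a compact manifold at all (there are oriented cobordism obstructions), and even if it does, the flat bundle $E_1|_{\partial M_1}$, i.e.\ the representation of $\pi_1(\partial M_1)$, need not factor through $\pi_1(M_3)$, so it need not extend; and even then the determinant class of the extension must be checked.

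The paper's Case 1 circumvents exactly this obstruction with a crossing device that you have omitted: one first replaces $\mathcal D_i$ by the modified product $\underline{\mathcal D_i \times \mathcal D_{S^2}}$ with a unitary line bundle over $S^2$, which multiplies $\mathcal R$ and the Mathai--Quillen integral by $\chi(S^2) = 2$ via Propositions \ref{PRODFORM2} and \ref{fac1}. The new boundary is $\partial M_1 \times S^2 = \partial(\partial M_1 \times D^3)$, which \emph{always} bounds the compact manifold $M_3 := \partial M_1 \times D^3$; the flat bundle extends as $E_1|_{\partial M_1} \hat\otimes E_{\ceals}^{D^3}$ with the trivial line bundle over $D^3$; determinant class of all the pieces then follows from simple-connectedness of $D^3$ together with Propositions \ref{PRODFORM2} and \ref{glueglue}. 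Only after this is Proposition \ref{gluecomp} (hence Zhang's Theorem \ref{CLOSINV2}) applicable, and the common factor $\chi(S^2)=2$ cancels from both sides. Without some such device, your argument does not get off the ground. A secondary, smaller point: your framing of the gluing step as ``$\mathcal R(\overline{\mathcal D}_i) - \mathcal R(\mathcal D_i)$ depends only on $\mathcal D_3$'' cannot be read off Corollary \ref{rella}.3 directly, because the pair $(\overline{\mathcal D}_i,\mathcal D_i)$ does not satisfy the local-isomorphism hypothesis of Corollary \ref{rella}.2 (one system has boundary, the other does not, and the critical sets differ). One must instead compare the two pairs $(\mathcal D_1,\mathcal D_2)$ and $(\overline{\mathcal D}_1,\overline{\mathcal D}_2)$, each of which does satisfy the hypotheses, as is done in Proposition \ref{gluecomp}.
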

\begin{proof}
We consider different cases: \\
{\bfseries Case 1: The systems $\mathcal D_i$ satisfy the hypotheses of Corollary \ref{rella}.2:} \\
Consider the admissible system $\mathcal D_{S^2}: (E^{S^2}_{\ceals} \downarrow S^2,g,h,\nabla_{g} f)$ with $E^{S^2}_{\ceals} \downarrow S^2$ the the trivial complex line bundle over $S^2$, $(f,g)$ some Morse-Smale pair on $S^2$ and $h$ a parallel metric on $E_{\ceals}^{S^2}$. Since $S^2$ is simply-connected, the system $\mathcal D_{S^2}$ is of determinant class. It follows from Proposition \ref{PRODFORM2} that that also the modified product systems $\underline{\mathcal D_i \times \mathcal D_{S^2}}$  are of determinant class, so that
\begin{align}\label{PROD} \mathcal R(\underline{\mathcal D_i \times \mathcal D_{S^2}}) = 2 \mathcal R(\mathcal D_i), \end{align} where we have used that $\chi(S^2) = 2$, as well as the well-known fact that $\mathcal R(\mathcal D_{S^2}) = 0$, which follows for example also from Theorem \ref{CLOSINV2}. \\
Next, consider the trivial complex line bundle $E^{D^3}_{\ceals} \downarrow D^3$. Since $D^3$ is simply-connected, it is of determinant class. Moreover, since $E|_{\partial M_1} \downarrow \partial M_1$ is of determinant class by assumption and $\partial M_1$ is closed, it follows again from Proposition \ref{PRODFORM2} that the product bundle $E|_{\partial M_1} \hat \otimes E^{D^3}_{\ceals} \downarrow \partial M_1 \times D^3$, as well as its restriction to $\partial (\partial M_1 \times D^3) = \partial M_1 \times \partial D^3$, is of determinant class. Now observe that by construction, the identification $\partial D^3 \cong S^2$ induces an isomorphism of flat bundles $E_1 \hat \otimes E^{D^3}_{\ceals}|_{\partial M_1 \times \partial D^3} \downarrow \partial M_1 \times \partial D^3 \cong E_i \hat \otimes E^{S^2}_{\ceals}|_{\partial M_i \times S^2} \downarrow \partial M_i \times S^2$ for $i =1,2$. Just as in Proposition \ref{gluecomp}, we can therefore define for $i=1,2$
\begin{align*} &N_i : = M_i \times S_2 \cup_{\partial M_1 \times S^2} \partial M_1 \times D^3,\\& \overline{E_i} \coloneqq E_i \hat \otimes E^{S^2}_{\ceals} \cup_{E_i|_{\partial M_i} \hat \otimes E^{S^2}_{\ceals}} E_1 \hat \otimes E^{D^3}_{\ceals}. \end{align*}
By Proposition \ref{glueglue}, it follows that $\overline{E_i} \downarrow N_i$ is of determinant class. Hence, the modified product systems $\underline{\mathcal D_i \times \mathcal D_{S^2}}$ satisfy also the assumptions of Proposition \ref{gluecomp}, from which we get
\begin{align} & \mathcal R(\underline{\mathcal D_1 \times \mathcal D_{S^2}})  + \frac{1}{2}\int_{M_1 \times S^2} \theta(h_1 \hat \otimes h) \wedge \nabla_{g_1 \times g}(\underline{f_1 + f})^* \Psi\left(T(M_1 \times S^2), g_1 \times g\right)  \nonumber \\
\label{propgood} & = \mathcal R(\underline{\mathcal D_2 \times \mathcal D_{S^2}}) + \frac{1}{2}\int_{M_2 \times S^2} \theta(h_2 \hat \otimes h) \wedge (\nabla_{g_2 \times g}(\underline{f_2 + f}))^* \Psi\left(T(M_2 \times S^2), g_2 \times g\right).  \end{align}
 Applying the product formula \ref{fac1}, we obtain for $i =1,2$ 
\begin{align*}\theta(h_i \hat \otimes h) \wedge \nabla_{g_i \times g}(\underline{f_i + f})^* \Psi\left(T(M_i \times S^2), g_i \times g\right) = \left( \theta(h_i) \wedge (\nabla_{g_i}f_i)^* \Psi (TM_i,g_i) \right)  \otimes e(TS^2,g), \end{align*}
Since $e(TS^2,g)$ is a representative of the rational Euler class of $TS^2$, we obtain that $\int_{S^2} e(TS^2,g) = \chi(S^2) = 2$. Together with the previous equation, this  implies for $i=1,2$, that \begin{align}\label{prodeul} \int_{M_i \times S^2}\theta(h_i \hat \otimes h) \wedge \nabla_{g_i \times g}(\underline{f_i + f})^* \Psi\left(T(M_i \times S^2), g_i \times g\right) 
= 2 \int_{M_i} \theta(h_i) \wedge (\nabla_{g_i}f_i)^* \Psi (TM_i,g_i). \end{align}
The result now follows from \ref{PROD} -- \ref{prodeul}. \\
{\bfseries Case 2: The systems $\mathcal D_i$ don't satisfy the hypotheses of Corollary \ref{rella}.2:} \\
Since the $\mathcal D_i$ are by assumption admissible, we find a neighborhood $U$ of $\partial M$, such $\theta(h_i) \equiv 0$ on $U$ and $g_i \equiv g_i'$ on $M \setminus U$, which is why $\theta(h_i) \wedge  (\nabla_{g_i'}f_i)^* \Psi(TM_i,g_i) = \theta(h_i) \wedge  (\nabla_{g_i}f_i)^* \Psi(TM_i,g_i)$ on {\itshape all of $M$}. Moreover, since both $g_i'$ and $g_i$ are of product form near $\partial M_i$ and $h_i|_{\partial M_i}$ is unimodular, it follows from Proposition \ref{METANOREAL} that $\mathcal R(\mathcal D_i) = \mathcal R(E_i \downarrow M_i,g_i',h_i,\nabla_{g_i'}f_i)$. Therefore, we may assume without loss of generality that $g_i \equiv g_i'$ on all of $M$. \\
Now since the $M_i$ are odd-dimensional with $\partial M_1 = \partial M_2$, we have $\chi(M_1) = \chi(M_2)$. Using this, one proceeds as in \cite[Section 6]{Friedlander:Uni} to show that there exist subdivisions $(\overline{f_i},\overline{g_i})$ of $(f_i,g_i)$ (with $\overline{g_i} = g_i$ near $\partial M_i$), neighborhoods $U_i$ of $\Cr(\overline{f_i}) \cup \partial M_i$ and an isometry
$\theta: (U_1,\overline{g_1}) \to (U_2,\overline{g_2})$ satisfying $\theta(\Cr(\overline{f_1})) = \Cr(\overline{f_2})$, $\theta(M_1) = M_2$ and $\overline{f_2} \circ \theta = \overline{f_1}$. By Lemma \ref{RELINVSUB}, one additionally finds a Hermitian form $\overline{h_i}$ on the bundle $E_i \downarrow M_i$ (with $h_i = \overline{h_i}$ near $\partial M_i$) so that $\overline{ \mathcal D_i} \coloneqq(E_i \downarrow M_i,\overline{g_i},\overline{h_i},\nabla_{\overline{g_i}}\overline{f_i})$ is an admissible system, satisfying \begin{equation} \mathcal R(\mathcal D_i) = \mathcal R(\overline{\mathcal D_i}). \end{equation} Moreover, since the new systems $\overline{\mathcal D_i}$ now also satisfy the assertions of Corollary \ref{rella}.2, we can apply Case $1$ to them and are finished. 
\end{proof}

{\bfseries Proof of Theorem \ref{MAINTHEOREM}:} Let $\mathcal D = (E \downarrow M,g,h,\nabla_{g'} f)$ be an Morse-Smale system of product form, $M$ odd-dimensional, so that $E|_{\partial M} \downarrow \partial M$ is also of determinant class. After pertubing the metric $g$ outside from a neighborhood of $\partial M$, it is because of Proposition \ref{METANO} that we may assume without loss of generality that $g \equiv g'$ outside from a neighborhood of $\partial M$, i.e.\ that $\mathcal D$ is admissible. \\  Choose a Morse-Smale pair $(\hat{f},\hat{g})$ on $\partial M$. Then, \begin{equation*} \mathcal D' \coloneqq (E|_{\partial M} \downarrow \partial M,g|_{\partial M},h|_{\partial M}, \nabla_{\hat{g}} \hat{f}) \end{equation*} is a Morse-Smale system of determinant class. Since $\partial M$ is closed, we have by Theorem \ref{CLOSINV2}\begin{align}\label{ZHANGTRIV}& \mathcal R(\mathcal D') = -\frac{1}{2} \int_{\partial M} \theta(h|_{\partial M}) \wedge (\nabla_{\hat{g}}\hat{f})^* \Psi(T\partial M,g|_{\partial M}) = 0, \end{align} where the last equality follows from the assumption that $h_{\partial M}$ is unimodular, i.e.\ $\theta(h|_{\partial M}) \equiv 0$. \\
Now recall the trivial system $\mathcal D_0 = (E_{\ceals} \downarrow I,g_0,h_0,\nabla_{g_0}f_0)$ over the interval $I = [a,b]$ that we have defined in \ref{TRIVEX} and its relative torsion \begin{align} \mathcal R(\mathcal D_0) = - \frac{\log 2}{2}. \end{align}
Since $\partial M$ is closed and $\partial I = \{a,b\}$, we can form the modified product system \begin{align} \underline{\mathcal D' \times \mathcal D_0} = (E_I \downarrow \partial M \times I, g_I,h_I, \nabla_{\hat{g}_I} \hat{f}_I), \end{align}
with $E_I \coloneqq E_{\partial M} \hat \otimes E_{\ceals}$, $g_I \coloneqq  g_{\partial M} \times g_0$, $\hat{g}_I \coloneqq \hat{g} \times g_0$, $h_I \coloneqq h|_{\partial M}\hat \otimes h_0$ and $\hat{f}_I$ the sum of the Morse functions $\hat{f} + f_0$ that is appropriately modified near the boundary $\partial M \times \{a,b\}$, so that $\underline{\mathcal D' \times \mathcal D_0}$ is a type \rom{2} Morse-Smale system. By Proposition \ref{PRODFORM2}, this system is of determinant class as well and satisfies \begin{align} \mathcal R(\underline{\mathcal D' \times \mathcal D_0}) = \mathcal R(\mathcal D') - \frac{\log{2}}{2} \chi(\partial M,E) \stackrel{\ref{ZHANGTRIV}}{=} - \frac{\log 2}{2} \chi(\partial M) \dim(E).\end{align}
Moreover, as $\theta(h_0) \equiv 0$ and $\theta(h|_{\partial M}) = 0$ by assumption, we retrieve from the product formula \ref{prodvolch} the equality
\begin{align} \theta(h_I) =  \theta(h|_{\partial M} \hat \otimes h_0) =  0.\end{align}
Notice that $\underline{\mathcal D' \times \mathcal D_0}$ is not necessarily an admissible system. This is due to the fact that neither is $g_I$ trivial nor $h_I$ parallel near $\Cr(\hat{f}_I)$. However, since $\Cr(\hat{f}_I)$ is disjoint from $\partial M \times \{a,b\}$, we can pertube the metrics outside of a small neighborhood of $\partial M$ to produce metrics $\widetilde{g_I}$ and $\widetilde{h_I}$, so that $\widetilde{h_I}$ is parallel near $\Cr(f_I)$, and that we have $\widetilde{g_I}  \equiv \hat{g}_I$ outside of a neighborhood of $\partial M$ and near $\Cr(\hat{f}_I)$. By Lemma \ref{UNIMODEX}, the pertubation of the Hermitian form $h_I$ can be performed in such way that still, we have \begin{align}\label{trivform}& \theta(\widetilde{h_I}) \equiv 0, \\& \widetilde{h_I}(p) = h_I(p),  \hspace{.5cm} p \in \Cr(\hat{f}_I).\end{align} For the resulting admissible system $\mathcal D_I \coloneqq (E_I \downarrow \partial M \times I, \widetilde{g_I},\widetilde{h_I}, \nabla_{\hat{g}_I}\hat{f}_I)$, we obtain from Proposition \ref{METANO} that \begin{align}\label{relgood} \mathcal R(\mathcal D_I) = \mathcal R(\underline{\mathcal D' \times \mathcal D_0})  = - \frac{\log 2}{2} \chi(\partial M) \dim(E). \end{align} 
Observe now that by construction, $\mathcal D_I$ and the disjoint union $\mathcal D \sqcup \mathcal D \coloneqq (E \downarrow  M \sqcup E  \downarrow M, g \sqcup g, h \sqcup h, \nabla_{g'}f \sqcup \nabla_{g'}f)$ of $\mathcal D$ with itself are two admissible systems satisfying the hypotheses of Theorem \ref{strongthm}. This allows us to finally conclude as follows:
\begin{align} &2 \mathcal R(\mathcal D ) = \mathcal R(\mathcal D \sqcup \mathcal D )  \nonumber\\&  \stackrel{\ref{power}}{=} \mathcal R(\mathcal D_I) -  \int_{M} \theta(h) \wedge (\nabla_{g}f)^*  \Psi(TM,g)  + \frac{1}{2}\int_{\partial M \times I} \theta(\widetilde{h_I})\wedge (\nabla_{\hat{g}_I}\hat{f}_I)^*  \Psi(T(\partial M \times I),\widetilde{g_I}) \nonumber  \\ & \stackrel{\ref{trivform}}{=}  \mathcal R(\mathcal D_I) -  \int_{M} \theta(h) \wedge (\nabla_{g}f)^*  \Psi(TM,g)  \nonumber \\& \stackrel{\ref{relgood}}{=} -\frac{\log 2}{2} \chi(\partial M) \dim(E) -  \int_{M} \theta(h) \wedge (\nabla_{g}f)^* \Psi(TM,g). \end{align}
This finishes the proof of Theorem \ref{MAINTHEOREM}. $\qed$

\begin{bibdiv}
\begin{biblist}

\bib{Abert:Growth}{article}{
author={M. Abert},
author={N. Bergeron},
author={I. Biringer},
author={T. Gelander},
author={N. Nikolov},
author={J. Raimbault},
author={I. Samet},
title={On the growth of $L^2$-invariants for sequences of lattices in Lie groups},
journal={Ann. of Math. (2)},
volume={185},
date={2017},
number={3},
pages={711-790},
}

\bib{Atiyah:El}{article}{
author={M. F. Atiyah},
title={Elliptic operators, discrete groups and von Neumann algebras},
pages={43-72.},
journal={Soc. Math. France},
number={32-33},
date={1976},
note={Colloque "Analyse et Topologie" en l'Honneur de Henri Cartan (Orsai, 1974)},
}

\bib{Hurt:Morse}{book}{
author={A. Banyaga},
auhtor={D. Hurtubise},
title={Lectures on Morse homology},
series={Kluwer Texts in the Mathematical Sciences}, 
pages={x+324 pp.},
publisher={Kluwer Academic Publishers Group, Dordrecht},
date={2004},
}

\bib{Bergeron:Growth}{article}{
author={N. Bergeron},
author={A. Venkatesh},
title={The asymptotic growth of torsion homology for arithmetic groups},
journal={J. Inst. Math. Jussieu},
volume={12},
date={2013},
number={2},
pages={391-447},
}

\bib{Bismut:Extension}{article}{
author={J. M. Bismut},
author={W. Zhang}, 
title={An extension of a theorem by Cheeger and M\"uller. With an appedix by Francois Laudenbach.},
journal={Astérisque},
number={205},
date={1992},
pages={235 pp.},
}

\bib{Bismut:Groth}{article}{
author={J. M. Bismut},
author={ H. Gillet}, 
author={C. Soul\'{e}},
title={Complex immersions and Arakelov geometry},
journal={Progress in Mathematics}, 
volume={89}, 
pages={249–331},
date={1990},
}

\bib{Braverman:Det}{article}{
author={M. Braverman, A. Carey, M. Farber, V. Mathai},
title={$L^2$-torsion without the determinant class condition and extended $L^2$ cohomology},
journal={Commun. Contemp. Math.},
volume={7},
date={2005},
number={4},
pages={421-462},
}

\bib{Bruning:Glue}{article}{
author={J. Br\"uning},
author={X. Ma}, 
title={On the gluing formula for the analytic torsion},
journal={Math. Z.},
volume={273},
date={2013},
pages={1085-1117},
}

\bib{Bruning:An}{article}{
author={J. Br\"uning},
author={X. Ma},
title={An anomaly formula for Ray-Singer metrics on manifolds with boundary},
journal={C. R. Math. Acad. Sci. Paris},
volume={335},
date={2002},
number={7},
pages={603-608},
}

\bib{Friedlander:Rel}{article}{
author={D. Burghelea},
author={L. Friedlander}, 
author={T. Kappeler},
title={Relative Torsion},
journal={Commun. Contemp. Math.},
date={2001},
volume={5},
pages={15-85},
}

\bib{Friedlander:Bd}{article}{
author={D. Burghelea},
author={L. Friedlander}, 
author={T. Kappeler},
title={Torsions for manifolds with boundary and glueing formulas},
journal={Math. Nachr.},
volume={208},
date={1999},
pages={31-91},
}

\bib{Friedlander:Uni}{article}{
author={D. Burghelea}, 
author={L. Friedlander}, 
author={T. Kappeler}, 
author={P. Macdonald},
title={Analytic and Reidemeister torsion for representations in finite type Hilbert modules},
journal={Geom. Funct. Anal.},
date={1996},
volume={6},
pages={751-859},
}

\bib{Carey:Tor}{article}{
author={A. Carey},
author={V. Mathai},
title={$L^2$-torsion invariants},
journal={J. Funct. Anal.},
volume={110},
date={1992},
number={2}
pages={442-456},
}

\bib{Chapman}{article}{
author={T. A. Chapman},
title={Topological invariance of the Whitehead torsion},
journal={Am. J. Math.},
volume={96},
date={1974},
pages={488-497},
}

\bib{Cheeger:Tor}{article}{
author={J. Cheeger},
title={Analytic torsion and the heat equation},
journal={Ann. of Math. (2)},
volume={109},
date={1979},
number={2},
pages={259-322},
}


\bib{Dodziuk}{article}{
author={J. Dodziuk},
title={de Rham-Hodge theory for $L^2$-cohomology of infinite coverings},
journal={Topology},
volume={16},
date={1977},
number={2},
pages={157-165},
}

\bib{Gromov:Novi}{article}{
author={M. Gromov},
author={M. A. Shubin},
title={Von Neumann spectra near zero},
journal={GAFA},
volume={1},
date={1991},
pages={375-404},
}


\bib{Lueck:book}{book}{
   author={W. L\"{u}ck},
   title={$L^2$-invariants: theory and applications to geometry and
   $K$-theory},
   series={Ergebnisse der Mathematik und ihrer Grenzgebiete. 3. Folge},
   volume={44},
   publisher={Springer-Verlag, Berlin},
   date={2002},
   pages={xvi+595},
   isbn={3-540-43566-2},
   review={\MR{1926649}},
}

\bib{Lueck:hyp}{article}{
author={W. L{\"u}ck},
author={T. Schick},
title={$L^2$-torsion of hyperbolic manifolds of finite volume},
journal={Geom. Func. Anal. 9},
volume={2},
date={1999},
pages={518-567},
}

\bib{Lueck:Tor}{article}{
author={W. L{\"u}ck},
title={Analytic and topological torsion for manifolds with boundary and symmetry},
journal={J. Differential Geom.},
volume={37},
number={2},
date={1993},
pages={263-322},
}

\bib{Zhang:AN}{article}{
author={X. Ma},
author={W. Zhang},
title={An anomaly formula for $L^2$-analytic torsions on manifolds with boundary},
journal={Analysis, geometry and topology of elliptic operators},
pages={235-262},
date={2006},
}

\bib{Mathai:Tor}{article}{
author={V. Mathai},
title={$L^2$-analytic torsion},
journal={J. Funct. Anal.},
volume={107},
date={1992},
number={2},
pages={369-386},
}

\bib{Milnor:Torsion}{article}{
author={J. Milnor},
title={Whitehead torsion},
journal={Bull. Amer. Math. Soc.},
volume={72},
date={1966},
pages={358-426},
}

\bib{Muller:Hyp}{article}{
author={W. M\"uller},
author={F. Rochon},
title={Analytic torsion and Reidemeister torsion of hyperbolic manifolds with cusps},
journal={Preprint},
date={2019}
volume={arXiv:1903.06199},
}

\bib{Muller:cover}{article}{
author={W. M\"uller},
author={J. Pfaff},
title={The analytic torsion and its asymptotic behaviour for sequences of hyperbolic manifolds of finite volume},
journal={J. Funct. Anal.},
volume={267},
date={2014},
number={8},
pages={2731-2786},
}

\bib{Muller:Tor2}{article}{
author={W. M\"uller},
title={Analytic torsion and $\emph{R}$-torsion for unimodular representations},
journal={J. Amer. Math. Soc.},
volume={6},
date={1993},
number={3},
pages={721-753},
}

\bib{Muller:Tor1}{article}{
author={W. M\"uller},
title={Analytic torsion and $\emph{R}$-torsion of Riemannian manifolds},
journal={Adv. in Math.},
volume={28},
date={1978},
number={3},
pages={233-305},
}

\bib{Qin:Morse}{article}{
author={L. Qin}, 
title={On Moduli Spaces and CW Structures Arising from Morse Theory On Hilbert Manifolds},
journal={J. Topol. Anal. 2},
number={4},
date={2010},
pages={469-526},
}

\bib{Ray:Atorsion}{article}{
author={D. B. Ray},
author={I. M. Singer},
title={$R$-torsion and the Laplacian on Riemannian manifolds},
journal={Advances in Math.},
volume={7},
pages={145-210},
date={1971},
}


\bib{Reed:Func}{book}{
author={M. Reed},
author={B. Simon},
title={Methods of modern mathematical physics. I. Functional analysis. Second edition.},
publisher={Academic Press, Inc. [Harcourt Brace Jovanovich, Publishers], New York.},
date={1980},
pages={xv+400 pp.},
isbn={0-12-585050-6},
}

\bib{Schaetz:Morse}{thesis}{
author={F. Sch\"atz},
title={The Morse-Smale Complex},
type={Diploma Thesis},
organization={Universit\"at Wien},
advisor={P. W. Michor},
date={2005},
}

\bib{Schick:Bounded}{article}{
author={T. Schick},
title={Analysis and Geometry of Boundary Manifolds of Bounded Geometry},
journal={arXiv:math/9810107v1 [math.GT]},
date={1998},

}

\bib{Schick:Bounded2}{thesis}{
author={T. Schick},
title={Analysis on $\delta$-Manifolds of Bounded Geometry, Hodge-De Rham Isomorphism and $L^2$-Index Theorem}
type={PhD Thesis},
organization={Johannes Gutenberg-Universität Mainz},
journal={Shaker Verlag},
date={1996},

}

\bib{Schwarz:Morse}{book}{
author={M. Schwarz},
title={Morse Homology},
series={Progress in Mathematics, 111},
publisher={Birkhäuser Verlag, Basel},
date={1993},
pages={x+235 pp.},
isbn={3-7643-2904-1},
}

\bib{Shubin:coeff}{article}{
author={M. A. Shubin},
title={De Rham Theorem for extended $L^2$-cohomology},
journal={Voronezh winter mathematical schools : dedicated to Selim Krein / Peter Kuchment},
volume={2},
date={1998},
pages={217-231},
}

\bib{Smale:Morse2}{article}{
author={S. Smale},
title={Morse inequalities for a dynamical system},
journal={Bull A. M. S.},
volume={66},
date={1960},
pages={43-49},
}

\bib{Smale:Morse}{article}{
author={S. Smale},
title={On gradient dynamical systems}, 
journal={Ann. Math.},
volume={74},
date={1961}, 
pages={199-206},
}

\bib{Strauss:PDG}{book}{
author={W.A. Strauss},
title={Partial differential equations. An introduction},
publisher={John Wiley \& Sons, Inc.},
date={1992},
pages={xii+425 pp. },
isbn={0-471-54868-5 35-01},
}

\bib{Vishik:Tor}{article}{
author={S. M. Vishik},
title={Analytic torsion of boundary value problems},
journal={Dokl- Akad. Nauk SSSR},
volume={295},
date={1987},
number={6},
pages={1293-1298},
}

\bib{Ich}{thesis}{
author={Wa\ss ermann, B.},
title={The $L^2$-Cheeger-M\"uller Theorem for
Representations of Hyperbolic Lattices},
}

\bib{Zhang:CM}{article}{
author={W. Zhang},
title={An extended Cheeger-M\"uller theorem for covering spaces},
journal={Topology},
volume={44},
number={6},
date={2005},
pages={1093–1131},
}

\end{biblist}
\end{bibdiv}

\end{document}